\let\l@section\l@chapter
\newcommand{\N}{{\mathbb{N}}}
\newcommand{\R}{{\mathbb{R}}}
\newtheorem{theorem}{Theorem}
\newtheorem{lemma}[theorem]{Lemma}
\newtheorem{definition}{Definition}
\newtheorem{Proposition}[theorem]{Proposition}
\begin{document}

%
\runningtitle{Continuous-time Acceleration for Riemannian Optimization}

%
\runningauthor{Alimisis, Orvieto, B{\'e}cigneul, Lucchi}

\twocolumn[

\aistatstitle{A Continuous-time Perspective for Modeling Acceleration \\ in Riemannian Optimization}

\aistatsauthor{ Foivos Alimisis \ \ \ \  Antonio Orvieto \ \ \ \ Gary B{\'e}cigneul \ \ \ \ Aurelien Lucchi}
\aistatsaddress{ ETH Z\"urich, Switzerland } ]

\begin{abstract}
We propose a novel second-order ODE as the continuous-time limit of a Riemannian accelerated gradient-based method on a manifold with curvature bounded from below. This ODE can be seen as a generalization of the  ODE derived for Euclidean spaces, and can also serve as an analysis tool. We study the convergence behavior of this ODE for different classes of functions, such as geodesically convex, strongly-convex and weakly-quasi-convex. We demonstrate how such an ODE can be discretized using a semi-implicit and Nesterov-inspired numerical integrator, that empirically yields stable algorithms which are faithful to the continuous-time analysis and exhibit accelerated convergence. 
\end{abstract}

\section{Introduction}
A core problem in machine learning is finding a minimum of a function $f:H \rightarrow \R$. In the vast majority of machine learning applications, $H$ represents either a Euclidean space or a Riemannian manifold.
Among the most popular types of methods to optimize $f$ are first-order methods, such as gradient descent which simply updates a sequence of iterates $\{ x_k \}$ by stepping in the opposite direction of the gradient $\nabla f(x_k)$. In the case $H = \R^n$, gradient descent as a first-order method has been shown to achieve a suboptimal convergence rate. In a seminal paper~\cite{nesterov1983method}, Nesterov showed that one can construct an optimal -- a.k.a. accelerated -- algorithm that achieves faster rates of convergence for both convex and strongly-convex functions.
The convergence analysis of this algorithm relies heavily on the linear structure of the space $H$ and it is not until recently that a first adaptation to Riemannian spaces has been derived in~\cite{zhang2018towards}. The algorithm in~\cite{zhang2018towards} is shown to obtain an accelerated rate of convergence for \emph{geodesically} strongly-convex functions. These functions are of particular interest as they are non-convex in the Euclidean sense and they occur in some fundamental problems~\cite{zhang2016first, zhang2018towards}.

In this manuscript, we take a different direction from previous works that have focused on analyzing the discrete-time form of Nesterov acceleration. We instead derive a continuous-time model that generalizes the work of~\cite{su2014differential} to non-Euclidean spaces. The resulting second-order ODE is shown to exhibit an approximate equivalence to Nesterov acceleration, and can therefore be used as an analysis tool. We prove theoretically that the continuous-time process corresponding to the derived differential equation has an accelerated rate of convergence for various types of functions.
As in~\cite{su2014differential}, one can also obtain different discrete-time algorithms from such an ODE. We will here focus on a discretization scheme that we show empirically to yield an accelerated rate of convergence.


In summary, our main contributions are:
\vspace{-3mm}
\begin{itemize}
    \setlength\itemsep{0em}
    \item We derive a second-order differential equation that can serve as an analysis tool for a Riemannian variant of accelerated gradient descent.
    \item We analyze the convergence behavior of this ODE for three different types of functions: geodesically convex, strongly-convex and weakly-quasi-convex.
    \item As a byproduct of our convergence analysis, we establish some new technical results about the Hessian of the Riemannian distance function. These results could be of general interest.
    \item We prove that in the case of Riemannian gradient descent applied to geodesically strongly convex functions, the discrete and continuous trajectories remain close. The extension of this result to an accelerated method is however non-trivial.
    \item We provide empirical results on several problems of interest in order to confirm the validity of our theoretical analysis and discretization scheme.
\end{itemize}

\section{Related work}

\paragraph{Accelerated Gradient Descent/Flow.}
The first \emph{practical} accelerated algorithm in a vector space is due to Nesterov, back in 1983~\cite{nesterov1983method}. Since then, the community has shown a deep interest in understanding the mechanism underlying acceleration. A recent trend has been to look at acceleration from a continuous-time viewpoint. In such a framework, accelerated gradient descent is seen as the discretization of a second-order ODE. In    ~\cite{su2014differential}, Su et al. formulated a second order differential equation to capture the dynamics of the classical algorithm from Nesterov in the convex case. In ~\cite{wibisono2016variational}, Wisibono et al. study continuous accelerated dynamics introducing the concept of Bregman Lagrangian. In \cite{wilson2016lyapunov}, Wilson et al. substitute the classical estimate sequences technique by a family of Lyapunov functions in both discrete and continuous time.
In ~\cite{shi2018understanding}, Shi et al. show that differential equations are rough approximators of real learning dynamics, i.e. a given algorithm can generate many continuous models. Finally, the same authors showed in~\cite{shi2019acceleration} that symplectic integration~\cite{hairer2006geometric} has deep links to Nesterov's method.
\vspace{-3mm}
\paragraph{Riemannian optimization.}Research in the field of Riemannian optimization has recently encountered a lot of interest. A seminal book in the field is~\cite{absil2009optimization} who gives a comprehensive review of many standard optimization methods except accelerated methods. More recently, \cite{zhang2016first} proved convergence rates for Riemannian gradient descent applied to the class of geodesically convex functions. Acceleration in a Riemannian framework was discussed in~\cite{liu2017accelerated} who claimed to have designed Riemannian accelerated methods with guaranteed convergence rates but as discussed in~\cite{zhang2018towards}, their method relies on finding the exact solution to a nonlinear equation and it is not clear how difficult this problem is. Subsequently, \cite{zhang2018towards} developed the first computationally tractable accelerated algorithm on a Riemannian manifold, but their approach only has provable convergence for geodesically strongly-convex objectives. In contrast, we here address the problem of achieving acceleration for the \emph{weaker} class of weakly-quasi-convex objective functions.

\section{Background}
\vspace{-1mm}
We review some basic notions from Riemannian geometry that are required in our analysis. For a full review, we refer the reader to a classical textbook, for instance~\cite{Spivak_textbook}. 
\vspace{-3mm}
\paragraph{Manifolds.}
A differentiable manifold $M$ is a topological space that is locally Euclidean. This means that for any point $x \in M$, we can find a neighborhood that is diffeomorphic to an open subset of some Euclidean space. This Euclidean space can be proved to have the same dimension, regardless of the chosen point, called the dimension of the manifold. A Riemannian manifold $(M,g)$ is a differentiable manifold equipped with a Riemannian metric $g_x$, i.e.  an inner product for each tangent space $T_xM$ at $x \in M$. We denote the inner product of $u,v \in T_x M$ with $\langle u,v \rangle_x$ or just $\langle u,v \rangle$ when the tangent space is obvious from context. Similarly we consider the norm as the one induced by the inner product at each tangent space.
\vspace{-3mm}
\paragraph{Geodesics}
Geodesics are curves $\gamma: [0,1] \rightarrow M$ of constant speed and of (locally) minimum length. They can be thought of as the Riemannian generalization of straight lines in Euclidean spaces. Geodesics are used to construct the exponential map $ \textnormal{exp}_x: T_x M \rightarrow M$, defined by $ \textnormal{exp}_x(v)= \gamma(1)$, where $\gamma$ is the unique geodesic such that $\gamma(0)=x$ and $\dot \gamma(0)=v$. The exponential map is locally a diffeomorphism. Using the notion of geodesics, we can define an intrinsic distance $d$ between two points in the Riemannian manifold $M$, as the infimum of lengths of geodesics that connect these two points. Geodesics also provide a way to transport vectors from one tangent space to another. This operation called parallel transport is usually denoted by $\Gamma_x^y: T_x M \to T_y M$. Closely linked to geodesics is the notion of injectivity radius. Given a point $x \in M$, we define the injectivity radius at $x$ (denoted $\textnormal{inj}(x)$), the radius of the biggest ball around $x$, where the exponential map $\textnormal{exp}_x$ is a diffeomorphism. We denote the inverse of the exponential map inside this ball by $\textnormal{log}_x$.
\vspace{-6mm}
\paragraph{Vector fields and covariant derivative.}
The correct notion to capture second order changes on a Riemannian manifold is called covariant differentiation and it is induced by the fundamental property of Riemannian manifolds to be equipped with a connection. The fact that a connection can always be defined in a Riemannian manifold is the subject of the fundamental theorem of Riemannian geometry. We are interested in a specific type of connection, called the Levi-Civita connection, which induces a specific type of covariant derivative. For our purpose, it will however be sufficient to define the notion of covariant derivative using the (simpler) notion of parallel transport. First, we state the definition of a vector field on a Riemannian manifold.
\begin{definition}
Let $M$ be a Riemannian manifold. A vector field $X$ in $M$ is a smooth map $X:M \rightarrow \mathcal{T}M$, where $\mathcal{T}M$ is the tangent bundle, i.e. the collection of all tangent vectors in all tangent spaces of $M$, such that $p \circ X$ is the identity ($p$ is the projection from $\mathcal{T}M$ to $M$).
\end{definition}
\vspace{-1mm}
One can see a vector field as an infinite collection of imaginary curves, the so-called integral curves (formally they are solutions of first-order differential equations on $M$).
\begin{definition}
Given two vector fields $X,Y$ in a Riemannian manifold $M$, we define the covariant derivative of $B$ along $A$ to be
\vspace{-3mm}
\begin{equation*}
    \nabla_X Y (p) := \lim_{h \rightarrow 0} \frac{\Gamma_{\gamma(h)}^{\gamma(0)} Y(\gamma(h))-Y(p)}{h},
\end{equation*}
with $\gamma$ the unique integral curve of $A$ passing from $p$.
\end{definition}

\paragraph{Geodesic convexity.}
We remind the reader of the basic definitions needed in Riemannian optimization.
\begin{definition}
A subset $A\subseteq M$ of a Riemannian manifold $M$ is called geodesically uniquely convex, if every two points in $A$ are connected by a unique geodesic.
\end{definition}
\begin{definition}
A function $f:M \rightarrow \R$ is called geodesically convex, if $f(\gamma(t)) \leq (1-t)f(p)+tf(q)$, for $t \in [0,1]$, where $\gamma$ is any geodesic connecting $p,q \in M$. 
\end{definition}

Given a function $f:M \rightarrow \R$, the notions of  differential and (Riemannian) inner product allow us to define the Riemannian gradient of $f$ at $x \in M$, which is a tangent vector belonging to the tangent space based at $x$, $T_x M$.
\begin{definition}
The Riemannian gradient $\textnormal{\textnormal{gradf}}$ of a (real-valued) function $f:M \rightarrow \R$ at a point $x \in M$, is the tangent vector at $x$, such that $\langle \textnormal{\textnormal{gradf}}(x),u \rangle = df(x)u$~\footnote{$df$ denotes the differential of $f$, i.e. $df (x)[u] = \lim_{t \to 0} \frac{f(c(t)) - f(x)}{t},$ where $c: I \to M$ is a smooth curve such that $c(0) = x$ and $\dot c(0) = u$.}, for any $u \in T_x M$.
\end{definition}

Given the notion of Riemannian gradient and covariant derivative we can define the notion of Riemannian Hessian.
\begin{definition}
Given vector fields $A,B$ in $M$, we define the Hessian operator of $f$ to be
\begin{equation*}
    \operatorname{Hess}(f)(A,B):= \langle \nabla_A \operatorname{grad}f,B \ \rangle.
\end{equation*}
\end{definition}
Using the Riemannian inner product and the Riemannian gradient, we can formulate an equivalent definition for geodesic convexity for a smooth function $f$ defined in a geodesically uniquely convex domain $A$ (the inverse of the exponential map is well-defined).
\begin{Proposition}
Let a smooth, geodesically convex function $f:A \rightarrow \R$. Then, for any $x,y \in A$,
\begin{align*}
    f(x)-f(y) \geq \langle \textnormal{\textnormal{gradf}}(y), \log_y(x) \rangle.
\end{align*}
\end{Proposition}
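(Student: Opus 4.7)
The plan is to invoke the definition of geodesic convexity along the unique minimizing geodesic from $y$ to $x$, then take a one-sided derivative at $0$ and recognize the resulting directional derivative as the inner product with the Riemannian gradient.

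More precisely, first I would let $\gamma:[0,1]\to A$ denote the unique geodesic with $\gamma(0)=y$ and $\gamma(1)=x$; such a geodesic exists and is unique since $A$ is geodesically uniquely convex, and by definition of the exponential and logarithm maps we have $\dot\gamma(0)=\log_y(x)$. Applying the definition of geodesic convexity with endpoints $p=y$, $q=x$ gives
\begin{equation*}
f(\gamma(t)) \;\leq\; (1-t)f(y)+tf(x), \qquad t\in[0,1].
\end{equation*}
Rearranging and dividing by $t>0$ yields
\begin{equation*}
\frac{f(\gamma(t))-f(y)}{t} \;\leq\; f(x)-f(y).
\end{equation*}

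Next I would let $t\to 0^+$. By the definition of the differential, the left-hand side converges to $df(y)[\dot\gamma(0)]=df(y)[\log_y(x)]$. By the defining property of the Riemannian gradient, $df(y)[u]=\langle \operatorname{grad}f(y),u\rangle$ for every $u\in T_yM$. Hence the limit equals $\langle \operatorname{grad}f(y),\log_y(x)\rangle$, and we obtain
\begin{equation*}
\langle \operatorname{grad}f(y),\log_y(x)\rangle \;\leq\; f(x)-f(y),
\end{equation*}
which is the desired inequality.

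There is no real obstacle here; the only things to check carefully are that the orientation of the geodesic matches the appearance of $\log_y(x)$ (hence starting at $y$ rather than $x$), and that smoothness of $f$ together with the smoothness of $\gamma$ justifies passing to the one-sided limit. Since $A$ is assumed geodesically uniquely convex and $\log_y$ is well-defined on $A$, no additional hypotheses on the injectivity radius are needed.
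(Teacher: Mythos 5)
Your proof is correct and is the standard argument: apply the geodesic convexity inequality along the geodesic from $y$ to $x$, rearrange, and let $t\to 0^+$ to obtain the directional derivative $df(y)[\log_y(x)]=\langle\operatorname{grad}f(y),\log_y(x)\rangle$. The paper states this Proposition without proof, treating it as a known equivalence; your argument is exactly the canonical one, and you correctly flag the two small points that need care (the orientation of $\gamma$ matching $\log_y(x)$, and smoothness justifying the one-sided limit).
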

As in the Euclidean case, any local minimum of a geodesically convex function is a global minimum.
\newline
In a similar manner we can define geodesic strong convexity.
\begin{definition}
A smooth function $f:A \rightarrow \R$ is called geodesically $\mu$-strongly convex, $\mu >0$, if $\forall x,y \in A$
\begin{equation*}
    f(x)-f(y) \geq \langle \textnormal{\textnormal{gradf}}(y), \textnormal{\textnormal{log}}_y(x) \rangle+\frac{\mu}{2} \| \log_y(x) \|^2.
\end{equation*}
\end{definition}
If a function $f$ is geodesically strongly convex with a non-empty set of minima, then there is only one minimum and it is global.
\newline
We now generalize the well-known notion of Euclidean weak-quasi-convexity to Riemannian manifolds. For a review of this notion the reader can check \cite{guminov2017accelerated}.
\begin{definition}
A function $f:A \rightarrow \R$ is called geodesically $\alpha$-weakly-quasi-convex with respect to $c \in M$, if
\begin{equation*}
    \alpha (f(x)-f(c)) \leq -\langle \textnormal{\textnormal{gradf}}(x), \log_x(c) \rangle 
\end{equation*}
for some fixed $\alpha \in (0,1]$ and any $x \in M$.
\end{definition}
It is easy to see that weak-quasi-convexity implies that any local minimum of $f$ is also a global minimum.
\newline
Using the notion of parallel transport we can define when $f$ is geodesically L-smooth, i.e. has Lipschitz continuous gradient in a suitable differential-geometric way.
\begin{definition}
A function $f:M \rightarrow \R$ is called L-smooth if $\forall x,y \in M$ and geodesic $\gamma$ connecting them
\begin{equation*}
    \| \textnormal{\textnormal{gradf}}(x) -\Gamma_y^x {\textnormal{\textnormal{gradf}}(y)} \| \leq L l(\gamma),
\end{equation*}
where $\Gamma$ is the parallel transport along $\gamma$ and $l(\gamma)$ the length of $\gamma$.
\end{definition}
Geodesic $L$-smoothness has similar properties to its Euclidean analogue. Namely, a two times differentiable function is $L$-smooth, if and only if the norm of its Riemannian Hessian is bounded by $L$. 
\vspace{-4mm}
\paragraph{Curvature.}
In this paper, we make the standard assumption that the input space is not "infinitely curved". In order to make this statement rigorous, we need the notion of sectional curvature $K$, which is a measure of how sharply the manifold is curved (or how "far" from being flat our manifold is), "two-dimensionally".


\section{Hessian of the distance function}
\label{sec:Hessian_distance_function}

Before discussing the design and analysis of accelerated flows on manifolds, it is necessary to derive a crucial geometric result. During a first read, the reader may skip this section or return to it later to understand some of the technicalities in Section~\ref{sec:accelerated_ode}.

In Euclidean spaces, the law of cosines relates the lengths of the sides of a triangle to the cosine of one of its angles. One can also adapt this result to non-linear spaces as we will demonstrate next. We first derive a lemma that provides a bound on the Hessian of a variant of the the Riemannian squared distance function $-\frac{1}{2}d(X,p)^2$ for the curve $X:I \rightarrow M$ and $p \in M$. Alternatively, the Hessian of $-\frac{1}{2}d(X,p)^2$ can be seen as the covariant derivative of $\textnormal{log}_{X(t)}(p)$.

\begin{restatable}{lemma}{covariant}
\label{le:result for covariant derivative}
For a Riemannian manifold $M$ with curvature bounded above by $K_{\max}$ and below by $K_{\min}$ and $\textnormal{diam}(M) \leq D<$ $\begin{cases} \frac{\pi}{\sqrt{K_{\max}}} &, K_{\max}>0 \\
\infty &, K_{\max} \leq 0
\end{cases}$, we have that
\begin{equation*}
    \delta \| \dot X \| ^2 \leq\langle \nabla_ {\dot X} \textnormal{\textnormal{log}}_X(p), -\dot X \rangle \leq \zeta \| \dot X \| ^2,
\end{equation*}
where 
\vspace{-2mm}
{\small
\begin{equation*}
    \delta:= 
    \begin{cases}
    1 &, K_{\max} \leq 0\\
    \sqrt{K_{\max}} d(X,p) \cot(\sqrt{K_{\max}} d(X,p)) &, K_{\max} > 0 
    \end{cases}
\end{equation*}
and
\vspace{-2mm}
\begin{equation*}
    \zeta:= 
    \begin{cases}
    \sqrt{-K_{\min}} d(X,p) \coth(\sqrt{-K_{\min}} d(X,p)) &, K_{\min}<0 \\
    1 &, K_{\min}\geq 0
    \end{cases}.
\end{equation*}
}
\end{restatable}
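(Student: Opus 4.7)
The plan is to identify $\langle \nabla_{\dot X}\log_X(p), -\dot X\rangle$ with the Hessian of the squared distance function and then bound it by Jacobi-field comparison. Setting $\varphi(x) := \tfrac{1}{2} d(x,p)^2$, a standard identity gives $\textnormal{grad}\,\varphi(x) = -\log_x(p)$, whence
\[
\langle \nabla_{\dot X}\log_X(p), -\dot X\rangle \;=\; \operatorname{Hess}(\varphi)(\dot X,\dot X).
\]
The lemma is therefore equivalent to the two-sided Hessian bound $\delta\|\dot X\|^2 \le \operatorname{Hess}(\varphi)(\dot X,\dot X) \le \zeta\|\dot X\|^2$. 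The diameter condition $D < \pi/\sqrt{K_{\max}}$ (active only when $K_{\max}>0$) keeps us strictly inside the conjugate radius of the comparison sphere, so every Jacobi field used below is nondegenerate and $\log_X(p)$ is smooth in $X$.

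To make the Hessian computable I would introduce the variation of minimizing geodesics $\gamma_s(\sigma) := \exp_{X(s)}(\sigma\,\log_{X(s)}(p))$, $\sigma \in [0,1]$. The variation field $J(\sigma) := \partial_s\gamma_s(\sigma)|_{s=s_0}$ is a Jacobi field along $\gamma_{s_0}$ with boundary values $J(0)=\dot X$, $J(1)=0$, and the symmetry of the Levi--Civita connection ($\nabla_\sigma \partial_s\gamma = \nabla_s\partial_\sigma\gamma$) identifies $J'(0)$ with $\nabla_{\dot X}\log_X(p)$, so
\[
\operatorname{Hess}(\varphi)(\dot X,\dot X) \;=\; -\langle J'(0), J(0)\rangle.
\]
I then split $J = aT + J^\perp$ into its radial and orthogonal components along the unit tangent $T$ of $\gamma_{s_0}$. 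The scalar radial part satisfies $a''\equiv 0$ with $a(0)=\langle\dot X,T(0)\rangle$ and $a(1)=0$, so $a(\sigma)=(1-\sigma)\,a(0)$, and a short calculation shows this part contributes exactly $a(0)^2 = \|\dot X^\parallel\|^2$ to the Hessian.

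The heart of the argument is the perpendicular part. On a model space of constant sectional curvature $K$, the Jacobi field with $J^\perp(0)=v$, $J^\perp(1)=0$ is the explicit parallel-translated profile $f(\sigma)P_\sigma v$ where $f$ solves $f'' + K d^2 f = 0$ on $[0,1]$ with $f(0)=1$, $f(1)=0$; a direct computation gives $-\langle (J^\perp)'(0), J^\perp(0)\rangle = \sqrt{K}\,d\cot(\sqrt{K}d)\,\|v\|^2$ for $K>0$ and $\sqrt{-K}\,d\coth(\sqrt{-K}d)\,\|v\|^2$ for $K<0$, matching $\delta$ at $K=K_{\max}$ and $\zeta$ at $K=K_{\min}$. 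To promote these model computations to the actual manifold I would apply Rauch's comparison theorem (or, equivalently, a Sturm--Liouville comparison for the scalar $u(\sigma):=\|J^\perp(\sigma)\|^2$ derived from the Jacobi equation) under the two-sided bound $K_{\min}\le K\le K_{\max}$, sandwiching $-\langle (J^\perp)'(0),J^\perp(0)\rangle$ between the two model values. Combining with the exact radial contribution $\|\dot X^\parallel\|^2$, and using the elementary inequalities $x\cot x \le 1$ on $(0,\pi)$ and $x\coth x \ge 1$ on $(0,\infty)$ to rewrite $\|\dot X^\parallel\|^2 + \delta\|\dot X^\perp\|^2 \ge \delta\|\dot X\|^2$ and the analogue for $\zeta$, yields the claimed two-sided inequality. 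The main obstacle is the Rauch step: the comparison is of a \emph{boundary}-value rather than initial-value Jacobi equation, so one must check that no conjugate points occur on $[0,1]$ (which is precisely what the hypothesis $D < \pi/\sqrt{K_{\max}}$ guarantees) and keep careful track of the direction of the inequality under time reversal.
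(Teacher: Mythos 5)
Your identification $\langle \nabla_{\dot X}\log_X(p), -\dot X\rangle = \operatorname{Hess}\!\big(\tfrac12 d(\cdot,p)^2\big)(\dot X,\dot X)$ is exactly the one the paper uses (with the opposite sign convention, setting $f=-\tfrac12 d^2$ so $\operatorname{grad}f = \log_X(p)$), and your radial/normal split, the exact affine radial contribution $\|\dot X^\parallel\|^2$, and the last combination step using $x\cot x\le 1\le x\coth x$ all match what is done in the appendix. Where you genuinely diverge is in the normal part. The paper works at the level of operators: it writes $\nabla\operatorname{grad}(-\tfrac12 r^2) = -\operatorname{grad}r\otimes dr - r A_r$ where $A_r = \nabla\operatorname{grad}r$ is the shape operator of the distance sphere, records that in a model space $A_r(Y) = g_r(K)Y$ with $g_r$ the $\cot/\coth/1\!\!/r$ profile, and then invokes the Riccati comparison theorem (Petersen, Prop.~25) to sandwich $\langle A_r Y,Y\rangle$ between $g_r(K_{\max})\|Y\|^2$ and $g_r(K_{\min})\|Y\|^2$. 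You instead build the Jacobi field $J(\sigma)=\partial_s\exp_{X(s)}(\sigma\log_{X(s)}p)$ with boundary data $J(0)=\dot X$, $J(1)=0$, note $\operatorname{Hess}=-\langle J'(0),J(0)\rangle$, compute the model-space value explicitly, and propose to compare via Rauch/Sturm--Liouville. These are two presentations of the same curvature-comparison input: the Riccati route (paper) compares the shape operator directly and is an initial-value ODE comparison in the radial variable $r$, which makes the direction of inequalities immediate; the Jacobi-field route (yours) is a boundary-value problem, and, as you yourself flag, standard Rauch is stated for initial-value fields, so one needs an index-form or monotonicity argument (or the observation that the fixed-endpoint Jacobi field can be re-expressed as $J(\sigma)=f(\sigma)/f(0)$ times a suitable initial-value field) to get the sandwich with the right signs. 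That step is the only real gap in your sketch; it is fillable with classical tools, but the paper's Riccati formulation sidesteps it. Both routes need $D<\pi/\sqrt{K_{\max}}$ for nondegeneracy, and both give exactly the stated $\delta,\zeta$.
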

\begin{restatable}{corollary}{cosinelaw}
\label{cor:cosinelaw}
Let a geodesic triangle $\Updelta abc$ in a Riemannian manifold $M$ of curvature bounded above by $K_{\max}$ and $\textnormal{diam}(M) \leq D$. We denote be $B$ the angle between the edges $ab$ and $bc$. If $K_{\max}>0$, we assume in addition that $D < \frac{\pi}{\sqrt{K_{\max}}}$. Then
\begin{align*}
     (ac)^2 \geq \delta (bc)^2 + (ab)^2-2(ab)(bc)\cos(B)
\end{align*}
where $\delta$ is defined as
\begin{equation*}
    \delta= 
    \begin{cases}
    1 &, K_{\max} \leq 0\\
    \sqrt{K_{\max}} d(q,a) \cot(\sqrt{K_{\max}} d(q,a)) &, K_{\max} > 0
    \end{cases}
\end{equation*}
for some $q \in M$ along the edge $bc$.
\end{restatable}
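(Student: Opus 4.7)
The plan is to derive the inequality by applying Lemma \ref{le:result for covariant derivative} along the edge $bc$ of the triangle, combined with a Taylor expansion of the squared distance function to $a$. Concretely, parametrize the edge $bc$ as a unit-speed geodesic $X:[0,L]\to M$ with $X(0)=b$, $X(L)=c$, and $L=d(b,c)=bc$. Then define the smooth function $\phi:[0,L]\to\R$ by $\phi(t):=\tfrac12\, d(X(t),a)^2$. The endpoint values are $\phi(0)=\tfrac12(ab)^2$ and $\phi(L)=\tfrac12(ac)^2$, so bounding $\phi$ on $[0,L]$ translates directly into a bound on $(ac)^2$.

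Next I would compute the first two derivatives of $\phi$. By the standard identity for the gradient of the squared distance, $\dot\phi(t)=\langle -\log_{X(t)}(a),\dot X(t)\rangle$. Evaluating at $t=0$ and using that $\dot X(0)=\log_b(c)/d(b,c)$ together with the definition of the angle $B$ between $ba$ and $bc$, I obtain
\begin{equation*}
\dot\phi(0)=-\frac{\langle \log_b(a),\log_b(c)\rangle}{d(b,c)}=-(ab)\cos(B).
\end{equation*}
For the second derivative, differentiating along the geodesic $X$ (so $\nabla_{\dot X}\dot X=0$) gives $\ddot\phi(t)=\langle \nabla_{\dot X}\log_{X(t)}(a),-\dot X(t)\rangle$. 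This is precisely the quantity controlled by Lemma \ref{le:result for covariant derivative} applied to the curve $X$ and the point $a$; using $\|\dot X\|=1$ and the diameter hypothesis (which ensures $\log_{X(t)}(a)$ is well defined all along $bc$), the lemma yields
\begin{equation*}
\ddot\phi(t)\;\ge\;\delta(t):=\sqrt{K_{\max}}\,d(X(t),a)\cot\!\bigl(\sqrt{K_{\max}}\,d(X(t),a)\bigr)
\end{equation*}
when $K_{\max}>0$, and $\ddot\phi(t)\ge 1$ when $K_{\max}\le 0$.

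Finally, I would apply the integral form of Taylor's theorem (or equivalently the second-order mean value theorem) to $\phi$ on $[0,L]$:
\begin{equation*}
\phi(L)=\phi(0)+L\,\dot\phi(0)+\tfrac12 L^2\,\ddot\phi(\tau)
\end{equation*}
for some $\tau\in(0,L)$. Substituting the values computed above with $L=bc$ and setting $q:=X(\tau)$, this becomes
\begin{equation*}
\tfrac12(ac)^2\;\ge\;\tfrac12(ab)^2-(bc)(ab)\cos(B)+\tfrac12(bc)^2\,\delta,
\end{equation*}
which is exactly the claimed inequality after multiplying by $2$, with $\delta$ evaluated at $q$ on the edge $bc$.

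The only subtle step is the computation of $\dot\phi$ and $\ddot\phi$: one must verify that the identity $\operatorname{grad}_x\bigl(\tfrac12 d(x,a)^2\bigr)=-\log_x(a)$ holds inside the injectivity radius, which the diameter assumption guarantees, and that the covariant derivative identity for $\ddot\phi$ matches the left-hand side in Lemma \ref{le:result for covariant derivative}. These are standard in Riemannian geometry, but they are the technical backbone of the argument; everything else is algebra and the mean value theorem.
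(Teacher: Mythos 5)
Your proposal is correct and follows essentially the same route as the paper: parametrize the edge $bc$, Taylor-expand the squared distance to $a$ with Lagrange remainder, compute the first derivative via the identity $\operatorname{grad}\bigl(\tfrac12 d(\cdot,a)^2\bigr)=-\log_{\cdot}(a)$ and the second via the covariant derivative along the geodesic, then bound the remainder term using Lemma~\ref{le:result for covariant derivative}. The only cosmetic differences are your unit-speed parametrization over $[0,d(b,c)]$ with the factor $\tfrac12$ (the paper uses $t\in[0,1]$ with $\dot X(0)=\log_b(c)$) and your explicit rewriting of $\dot\phi(0)$ as $-(ab)\cos(B)$, both of which are immaterial.
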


Note that one can also recover Lemma 5 in \cite{zhang2016first} as a corollary of Lemma \ref{le:result for covariant derivative}.
\vspace{-3mm}
\paragraph{Properties of the cost as function of curvature.}
Given a geodesically uniquely convex subset $A \subset M$ and $p \in A$, we consider two points $x,y \in A$. We are interested in bounding distances in the geodesic triangle $\Updelta xyp$. Corollary \ref{cor:cosinelaw} states that
\begin{align*}
    d(x,p)^2 \geq \delta d(x,y)^2+d(y,p)^2-2 \langle \textnormal{log}_y(p), \textnormal{log}_y(x) \rangle
\end{align*}
Taking into consideration that the gradient of the function $f(x)=d(x,p)^2$ is $\textnormal{\textnormal{gradf}}(x)=-2 \textnormal{log}_x(p)$, the last inequality is equivalent to
\begin{equation*}
    f(x) \geq f(y)+ \langle \textnormal{\textnormal{gradf}}(y), \textnormal{log}_y(x) \rangle+\frac{2 \delta}{2} \| \textnormal{log}_x(y) \|^2
\end{equation*}
As shown in the appendix, this inequality is tight in the spherical case. This inequality also means that $f$ is either geodesically $2 \delta$-strongly convex, convex (but not strongly-convex) or not convex,
if $\delta>0$, $\delta=0$, or $\delta<0$ respectively. The first case happens, when $d(x,p) < \frac{\pi}{2 \sqrt{K_{\max}}}$, the second when $d(x,p) = \frac{\pi}{2 \sqrt{K_{\max}}}$ and the third when $ \frac{\pi}{2 \sqrt{K_{\max}}}<d(x,p)<\frac{\pi}{\sqrt{K_{\max}}}$.
\newline
However, note that the function $f$ is always 1-weakly-quasi-convex with respect to its global minimizer $p$. Indeed, from the definition $f(x) = d(x,p)^2$, we have
$f(x)-f(p)=\| \textnormal{log}_x(p) \|^2$ and $-\langle \textnormal{gradf}(x), \textnormal{log}_x(p) \rangle = 2 \langle \textnormal{log}_x(p), \textnormal{log}_x(p) \rangle  = 2 \| \textnormal{log}_x(p) \|^2$, which combined gives us $f(x)-f(p) \leq -\langle \textnormal{gradf}(x), \textnormal{log}_x(p) \rangle.$
\vspace{-3mm}
\paragraph{Example for a sphere.}
Consider a manifold $M$ as a sphere with constant curvature $K$. As a geodesically uniquely convex domain $A$, we take the ball $B_r(p)$ centered at $p \in A$ and with radius $r$. If $r < \frac{\pi}{2\sqrt{K}}$, then $\delta>0$, while if $r=\frac{\pi}{2\sqrt{K}}$ (i.e. $A$ is an open hemisphere), then $\delta=0$. The problem of minimizing $f(x)=d(x,p)^2$ is therefore either geodesically strongly-convex or geodesically convex depending on the value of $r$.
Alternatively, if we choose to construct our geodesically uniquely convex domain $A$ as an open hemisphere with $p \in A$ not at the center, then there are points with distance from $p$ more than $\frac{\pi}{2\sqrt{K}}$. Thus $\delta$ is negative and $f$ is not geodesically convex. Given that $f(x)=d(x,p)^2$ is always $1$-weakly-quasi-convex, the problem of minimizing $f$ is weakly-quasi-convex but not convex.
\vspace{-3mm}
\paragraph{Duality smoothness/convexity.}
Lemma 5 in \cite{zhang2016first} states that the  function $f(x)=d(x,p)^2$ is $2\zeta$-smooth. This shows that there is some sort of duality between convexity and smoothness with respect to the curvature of the manifold. For a given function $d(x,p)^2$, a smaller curvature makes the function more convex while also making it less smooth.


\section{Accelerated flows}
\label{sec:accelerated_ode}
Recall that the problem that we investigate is minimizing a function $f: M \to \R$. A fundamental algorithm to solve this problem is Riemannian gradient descent (RGD), which takes the form $x_{k+1} = \textnormal{exp}_{x_k}(-\eta \textnormal{gradf}(x_k))$, where $\eta>0$ is the so-called learning rate. The convergence properties of this method, extensively explored in~\cite{zhang2016first}, can be successfully studied~(see~\cite{munier2007steepest} and the appendix) by the means of its continuous-time limit $\dot X+\textnormal{\textnormal{gradf}}(X)=0$. 

In contrast, we are not aware of any prior work investigating the continuous-time formulation of an accelerated method.
Hence, taking inspiration from the seminal work of Su et al.~\cite{su2014differential}, we consider the following differential equation to model acceleration:
\begin{equation*}
\boxed{
\tag{RNAG-ODE}
    \nabla \dot X+ c \dot X+\textnormal{gradf}(X)=0}
\end{equation*}
For the convex and weakly-quasi-convex cases, we choose $c := c(t)=\frac{v}{t}$, where $v$ is a constant to be determined later. From now on, we define $\zeta$ as
\begin{equation*}
\zeta:= 
    \begin{cases}
    \sqrt{-K_{\min}} D \coth(\sqrt{-K_{\min}} D) &, K_{\min}<0 \\
    1 &, K_{\min}\geq 0
    \end{cases}
    \end{equation*}
where $D$ is an upper bound for the working domain. Next, following~\cite{zhang2018towards}, we make the following set of assumptions, which we will keep for the rest of the paper.

\paragraph{Assumptions}
Given $A \subseteq M$, and $f: M \to \R$,
\vspace{-4mm}
\begin{itemize}
    \setlength\itemsep{0mm}
    \item[1.] The sectional curvature $K$ inside $A$ is bounded from below, i.e. $ K \geq K_{\min}$.
    \item[2.] $M$ is a complete manifold, such that any two points are connected by some geodesic.
    \item[3.] $A$ is a geodesically uniquely convex subset of $M$, such that $\textnormal{diam}(A) \leq D$. The exponential map is globally a diffeomorphism.
    \item[4.] $f$ is geodesically $L$-smooth and all its minima are inside $A$.
    \item[5.] We have granted access to oracles which compute the exponential and logarithmic maps as well as the Riemannian gradient of $f$ efficiently.
    \item[6.] All the solutions of our derived differential equations remain inside $A$.
\end{itemize}

Note that the first four assumptions are standard in Riemmanian optimization~(\cite{munier2007steepest, zhang2016first, zhang2018towards}). The fifth assumption is mostly required for computational purpose. The last assumption could potentially be relaxed by relying on a barrier function or a projection step.

\subsection{Existence of a solution}
For strongly-convex functions, we will choose $c(t)$ to be constant, in which case existence and uniqueness of the solution can be shown to hold globally due to completeness of $M$.
\newline
When $c(t)=\frac{v}{t}$, the proof is not as simple and involves the use of the Arzela-Ascoli theorem for sequences of curves on Riemannian manifolds, in a similar vein as in~\cite{su2014differential}.  However,
we cannot guarantee the uniqueness of the solution.
 The proof is provided in the appendix.
 \vspace{-2mm}
\begin{restatable}{lemma}{existence}
The differential equation
\begin{equation}
    \nabla \dot X+\frac{v}{t} \dot X+\textnormal{\textnormal{gradf}}(X)=0
\label{eq:ODE_convex}
\end{equation}
where $v$ is a positive constant, has a global solution $X:[0, \infty) \rightarrow M$ under the initial conditions $X(0)=x_0 \in A$ and $\dot X(0)=0$.
\end{restatable}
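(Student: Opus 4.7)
The plan is to adapt the argument of Su et al.\ to the Riemannian setting by regularizing away the singularity at $t=0$, then passing to the limit via a Riemannian Arzel\`a--Ascoli argument. For each $\epsilon>0$, I would consider the shifted initial value problem
\begin{equation*}
\nabla_{\dot X_\epsilon}\dot X_\epsilon + \tfrac{v}{t}\dot X_\epsilon + \operatorname{gradf}(X_\epsilon) = 0, \qquad X_\epsilon(\epsilon)=x_0, \qquad \dot X_\epsilon(\epsilon)=0,
\end{equation*}
posed on $[\epsilon,\infty)$. On this interval $v/t$ is smooth and bounded, and $L$-smoothness of $f$ makes $\operatorname{gradf}$ Lipschitz in the Riemannian sense, so writing the ODE as a first-order system on $\mathcal{T}M$ via the Levi--Civita spray, the classical Picard--Lindel\"of theorem on manifolds yields a unique local solution $X_\epsilon$. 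To extend it to all of $[\epsilon,\infty)$, I would use the Lyapunov energy $E_\epsilon(t) := \tfrac{1}{2}\|\dot X_\epsilon(t)\|^2 + f(X_\epsilon(t))$; compatibility of the Levi--Civita connection with the metric together with the ODE gives $\dot E_\epsilon(t) = -\tfrac{v}{t}\|\dot X_\epsilon(t)\|^2 \leq 0$, so $E_\epsilon$ is nonincreasing, which together with Assumption 6 and completeness of $M$ precludes blow-up and yields the uniform bound $\|\dot X_\epsilon(t)\|^2 \leq 2(f(x_0)-\min_A f)$.

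To let $\epsilon\to 0$, fix $T>\tau>0$. The family $\{X_\epsilon\}_{\epsilon<\tau}$ on $[\tau,T]$ takes values in the bounded geodesically convex set $A$ and is equi-Lipschitz by the velocity bound; parallel-transporting $\dot X_\epsilon(t)$ along each curve to $T_{x_0}M$ gives tangent vectors in a single vector space that are equi-bounded and equicontinuous, since on $[\tau,T]$ the covariant acceleration is uniformly controlled via $\|\nabla_{\dot X_\epsilon}\dot X_\epsilon\| \leq v\|\dot X_\epsilon\|/\tau + \|\operatorname{gradf}(X_\epsilon)\|$. A Riemannian Arzel\`a--Ascoli theorem, applied in $T_{x_0}M$ via the global diffeomorphism $\textnormal{exp}_{x_0}$ provided by Assumption 3, extracts a subsequence converging uniformly together with its velocity, and a diagonal extraction over $\tau=1/k$, $T=k$ yields a single limit curve $X$ on $(0,\infty)$ satisfying \eqref{eq:ODE_convex} in its integrated form.

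For the initial conditions, the equi-Lipschitz bound gives $d(X_\epsilon(t),x_0) \leq Ct$ for $t\geq\epsilon$, so in the limit $X(t) \to x_0$ as $t\to 0^+$. For $\dot X(0)=0$, I would rewrite the ODE covariantly as $\nabla_{\dot X}(t^v\dot X) = -t^v\operatorname{gradf}(X)$, parallel-transport to $x_0$ along $X$, and integrate from $0$ to $t$, obtaining
\begin{equation*}
t^v\|\dot X(t)\| \leq \int_0^t s^v\|\operatorname{gradf}(X(s))\|\,ds \leq \frac{M t^{v+1}}{v+1},
\end{equation*}
where $M := \sup_{x\in A}\|\operatorname{gradf}(x)\| \leq L\,\textnormal{diam}(A) < \infty$, so $\|\dot X(t)\| \leq Mt/(v+1) \to 0$. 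The main obstacle is the Arzel\`a--Ascoli step: the covariant acceleration $\nabla_{\dot X_\epsilon}\dot X_\epsilon$ blows up like $1/t$ near zero, so equicontinuity cannot be obtained uniformly down to $t=0$. This is exactly why I work on $[\tau,T]$ and recover the initial velocity separately through the integral identity above. Uniqueness fails for the same reason as in the Euclidean case of Su et al., namely that the coefficient $v/t$ is not Lipschitz at the initial instant.
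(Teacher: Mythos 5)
Your proof is correct and takes a genuinely different route from the paper's. The paper regularizes the singular coefficient by replacing $v/t$ with $v/\max(\delta,t)$ while keeping $t=0$ as the starting time, whereas you keep the original coefficient and shift the starting time to $\epsilon$. This changes the character of the a priori estimates substantially. The paper's key technical device is the weighted quantity $M_\delta(t) = \sup_{u\in(0,t]}\|\dot X_\delta(u)\|/u$, for which it establishes several delicate bounds (points 1--4 of its proof) so that equicontinuity can be obtained on the full interval $[0,\sqrt{(v+3)/L}]$; your energy argument $\dot E_\epsilon = -\tfrac{v}{t}\|\dot X_\epsilon\|^2\le 0$ is both simpler and gives a clean uniform velocity bound directly, at the cost of being unable to control equicontinuity near $t=0$. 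You compensate by applying Arzel\`a--Ascoli only on compact subintervals $[\tau,T]\subset(0,\infty)$ with a diagonal extraction, then recovering the initial data separately. That last step is the one place worth tightening: rather than integrating the limit curve $X$ from $0$ (which requires first arguing that $s_0^v\dot X(s_0)\to 0$), it is cleaner to integrate the covariant identity $\nabla_{\dot X_\epsilon}(t^v\dot X_\epsilon)=-t^v\operatorname{gradf}(X_\epsilon)$ for each $X_\epsilon$ starting from its own initial time $\epsilon$, obtaining $t^v\|\dot X_\epsilon(t)\|\le\int_\epsilon^t s^v\|\operatorname{gradf}(X_\epsilon(s))\|\,ds\le Mt^{v+1}/(v+1)$, and then pass to the limit in $\epsilon$; this avoids any implicit appeal to the behavior of the limit curve at $0$. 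You and the paper both implicitly use that a minimizer exists in $A$ to get $\|\operatorname{gradf}\|\le LD$ there (needed for $M<\infty$) and that $f$ is bounded below on $A$, and you both invoke Assumption 6 for the regularized family, so those mild extra hypotheses are symmetric between the two proofs.
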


The proof relies on the following result that might be of independent interest and is close to the fundamental theorem of calculus for vector fields on Riemannian manifolds.

\begin{restatable}{lemma}{meanvalue}
\label{mean value manifolds}
Consider a vector field $A$ along the smooth curve $X:[a,b] \rightarrow M$ in a 
 Riemannian manifold $M$. Then
\vspace{-3mm}
\begin{equation*}
    \Gamma_{X(b)}^{X(a)} A(b)- A(a)= \int_a^b \Gamma_{X(t)}^{X(a)} \nabla A(t) dt
\end{equation*}
where $\Gamma$ is the parallel transport along the curve $X$.
\end{restatable}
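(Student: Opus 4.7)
}
The plan is to reduce the statement to the classical fundamental theorem of calculus applied to a curve in the \emph{fixed} vector space $T_{X(a)}M$. Define
\begin{equation*}
    \beta: [a,b] \to T_{X(a)}M, \qquad \beta(t) := \Gamma_{X(t)}^{X(a)} A(t).
\end{equation*}
Since parallel transport along $X$ from $X(t)$ back to $X(a)$ is a linear isomorphism that depends smoothly on $t$, and $A$ is smooth along $X$, the map $\beta$ is a smooth curve in the finite-dimensional vector space $T_{X(a)}M$. The ordinary fundamental theorem of calculus (component-wise in any basis of $T_{X(a)}M$) therefore gives
\begin{equation*}
    \beta(b) - \beta(a) = \int_a^b \dot\beta(t)\, dt.
\end{equation*}

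The main computation is then identifying $\dot\beta(t)$ with $\Gamma_{X(t)}^{X(a)} \nabla A(t)$. For this I would use the key composition property of parallel transport along a single curve: for small $h>0$,
\begin{equation*}
    \Gamma_{X(t+h)}^{X(a)} \;=\; \Gamma_{X(t)}^{X(a)} \circ \Gamma_{X(t+h)}^{X(t)},
\end{equation*}
which is a standard consequence of the fact that parallel transport is defined by a linear ODE along $X$. Using this identity and the linearity and continuity of $\Gamma_{X(t)}^{X(a)}$, I can pull the limit inside:
\begin{equation*}
    \dot\beta(t) \;=\; \lim_{h\to 0} \frac{\Gamma_{X(t+h)}^{X(a)} A(t+h) - \Gamma_{X(t)}^{X(a)} A(t)}{h}
    \;=\; \Gamma_{X(t)}^{X(a)} \lim_{h\to 0} \frac{\Gamma_{X(t+h)}^{X(t)} A(t+h) - A(t)}{h}.
\end{equation*}
By the definition of the covariant derivative along the curve $X$ (the natural analogue, for vector fields along a curve, of the definition given in the excerpt with $\gamma$ replaced by $X$), the remaining limit is exactly $\nabla A(t)$. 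Substituting yields $\dot\beta(t) = \Gamma_{X(t)}^{X(a)} \nabla A(t)$, and plugging back into the FTC identity gives the claim, since $\beta(a)=A(a)$ and $\beta(b)=\Gamma_{X(b)}^{X(a)}A(b)$.

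I expect the only delicate point to be the composition identity for parallel transport and the exchange of $\Gamma_{X(t)}^{X(a)}$ with the $h\to 0$ limit; both are routine but should be justified carefully, the first by the ODE definition of parallel transport along a curve and the second by the fact that $\Gamma_{X(t)}^{X(a)}$ is a bounded (in fact isometric) linear map between finite-dimensional inner product spaces. Smoothness of $\beta$, which is needed to justify the FTC on a fixed vector space, follows from smoothness of $X$, $A$, and the parallel transport operator along $X$. No curvature assumption is used, in agreement with the statement.
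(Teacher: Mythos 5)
Your proposal is correct and follows essentially the same route as the paper's proof: both define the curve $t\mapsto \Gamma_{X(t)}^{X(a)}A(t)$ in the fixed linear space $T_{X(a)}M$, apply the classical fundamental theorem of calculus there, and identify the derivative as $\Gamma_{X(t)}^{X(a)}\nabla A(t)$ by factoring the parallel transport through $X(t)$ and recognizing the covariant derivative in the remaining difference quotient. The only differences are cosmetic (name $\beta$ vs.\ $g$, limit in $h$ vs.\ in $s$), plus you spell out a bit more explicitly the justification for pulling $\Gamma_{X(t)}^{X(a)}$ outside the limit.
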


\vspace{-3mm}
\subsection{The convex case}
Now we are ready to analyze the convergence rate of the solutions of Eq.~\ref{eq:ODE_convex}, starting from a point $X(0) \in A$, to a minimizer $x^*$ of a geodesically convex function $f$.
\begin{tcolorbox}
\begin{restatable}{theorem}{convex}
Let $f$ be a geodesically convex function.
Any solution of the differential equation
\begin{equation}
\nabla \dot X+\frac{1+2 \zeta}{t} \dot X+\textnormal{\textnormal{gradf}}(X)=0
\end{equation}
converges to a minimizer $x^*$ of $f$ with rate
\begin{equation*}
\label{eqn:convex}
    f(X)-f(x^*) \leq \frac{2\zeta \| \textnormal{\textnormal{log}}_{x_0}(x^*) \|^2}{t^2} \quad (t > 0).
\end{equation*}
\vspace{-6mm}
\label{thm:thm_cvx}
\end{restatable}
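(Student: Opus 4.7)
The plan is to imitate the Su et al. Lyapunov strategy on the manifold. I would design an energy $\mathcal{E}(t)$ satisfying $\mathcal{E}(0) = 2\zeta\|\textnormal{log}_{x_0}(x^*)\|^2$, $\mathcal{E}(t) \geq t^2(f(X)-f(x^*))$, and $\dot{\mathcal{E}}(t) \leq 0$. Together these three facts yield the claimed rate immediately by dividing by $t^2$.

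The candidate Lyapunov is the asymmetric expansion
\begin{equation*}
    \mathcal{E}(t) = t^2\bigl(f(X)-f(x^*)\bigr) + 2\zeta\,\|\textnormal{log}_X(x^*)\|^2 - 2t\,\langle \textnormal{log}_X(x^*),\dot X\rangle + \tfrac{t^2}{2}\|\dot X\|^2.
\end{equation*}
I would point out that the naive Euclidean lift $2\zeta\|\textnormal{log}_X(x^*) - \tfrac{t}{2}\dot X\|^2$ does \emph{not} work when $\zeta > 1$; the mismatched coefficients above (with $\tfrac{t^2}{2}\|\dot X\|^2$ instead of $\tfrac{\zeta t^2}{2}\|\dot X\|^2$) are what absorbs the extra curvature terms. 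Non-negativity of $\mathcal{E}(t) - t^2(f-f^*)$ reduces to checking that the $2{\times}2$ quadratic form in $(\textnormal{log}_X(x^*),\dot X)$ is positive semi-definite, which holds because its determinant equals $t^2(\zeta-1) \geq 0$. At $t = 0$ with the initial velocity $\dot X(0)=0$, the expression collapses to $2\zeta\|\textnormal{log}_{x_0}(x^*)\|^2$.

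To show $\dot{\mathcal{E}} \leq 0$ I would differentiate each of the four summands using metric compatibility of the Levi-Civita connection together with the first-variation identity $\tfrac{d}{dt}\|\textnormal{log}_X(x^*)\|^2 = -2\langle \textnormal{log}_X(x^*),\dot X\rangle$, then substitute $\nabla\dot X = -\tfrac{1+2\zeta}{t}\dot X - \textnormal{gradf}(X)$ from the ODE. The damping coefficient $1+2\zeta$ is calibrated precisely so that two collections of terms cancel: the $\langle \textnormal{gradf}(X), \dot X\rangle$ contributions from $t^2(f-f^*)$ and from $\tfrac{t^2}{2}\|\dot X\|^2$, and the $\langle \textnormal{log}_X(x^*),\dot X\rangle$ contributions whose coefficients $-4\zeta,\,-2,\,2+4\zeta$ sum to zero. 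What survives is
\begin{equation*}
    \dot{\mathcal{E}}(t) = 2t\bigl[(f(X)-f(x^*)) + \langle \textnormal{log}_X(x^*),\textnormal{gradf}(X)\rangle\bigr] - 2t\bigl[\langle \nabla_{\dot X}\textnormal{log}_X(x^*),\dot X\rangle + \zeta\|\dot X\|^2\bigr].
\end{equation*}
Geodesic convexity of $f$ bounds the first bracket by zero, and Lemma~\ref{le:result for covariant derivative}, in the form $\langle \nabla_{\dot X}\textnormal{log}_X(x^*),-\dot X\rangle \leq \zeta\|\dot X\|^2$, forces the second bracket to be non-negative. Hence $\dot{\mathcal{E}} \leq 0$.

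The main obstacle is discovering the right Lyapunov candidate: one must recognize that the curvature factor $\zeta$ should multiply only $\|\textnormal{log}_X(x^*)\|^2$, rather than the whole squared-norm envelope, so that after invoking Lemma~\ref{le:result for covariant derivative} the damping $1+2\zeta$ produces the exact cancellations above. Once $\dot{\mathcal{E}}\leq 0$ is established, integrating from $0$ to $t$ gives $f(X(t))-f(x^*)\leq 2\zeta\|\textnormal{log}_{x_0}(x^*)\|^2/t^2$, and convergence of $X(t)$ to a minimizer follows from this rate together with geodesic convexity of $f$ and the boundedness of the working domain $A$.
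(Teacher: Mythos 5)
Your proof is correct and takes essentially the same approach as the paper: your Lyapunov candidate $\mathcal{E}(t)$ is literally the paper's $\epsilon(t)=t^2(f(X)-f(x^*))+2\| -\textnormal{log}_X(x^*)+\frac{t}{2} \dot X \|^2 + 2(\zeta-1) \| \textnormal{log}_X(x^*) \|^2$ written in expanded form, and the cancellation structure after substituting the ODE and invoking Lemma~\ref{le:result for covariant derivative} and geodesic convexity is the same one the paper exploits. The only cosmetic difference is that the paper presents the energy as a completed square plus a $(\zeta-1)$-weighted correction, which makes non-negativity of the non-$f$ part immediate, whereas you verify it via the $2\times 2$ determinant condition on the asymmetric quadratic form --- a correct and slightly more explicit check.
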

\end{tcolorbox}
\vspace{-2mm}
\begin{proof}[Proof sketch]
The proof is done by showing that the following Lyapunov function is decreasing:
\begin{align*}
    \epsilon(t)=t^2(f(X)-f(x^*))+2\| -\textnormal{\textnormal{log}}_X(x^*)+\frac{t}{2} \dot X \|^2 &\\ + 2(\zeta-1) \| \textnormal{\textnormal{log}}_X(x^*) \|^2.
\end{align*}
The novelty compared to~\cite{su2014differential} is the last curvature-dependent summand. Complete proof in the appendix.
\end{proof}
\vspace{-5mm}
\subsection{The weakly-quasi-convex case}
For $\alpha$-weakly-quasi convex functions, we have the following result.
\begin{tcolorbox}
\begin{restatable}{theorem}{weaklyquasiconvex}
Let $f$ be a geodesically $\alpha$-weakly-quasi-convex function. Any solution of the differential equation
\begin{equation}
\label{eqn:quasiconvex}
    \nabla \dot X+\frac{1+\frac{2}{\alpha} \zeta}{t} \dot X+\textnormal{\textnormal{gradf}}(X)=0
\end{equation}
converges to a minimizer $x^*$ of $f$ with rate
\begin{equation*}
    f(X)-f^* \leq \frac{2 \zeta \| \textnormal{log}_{x_0}(x^*) \|^2}{\alpha^2 t^2} \quad (t > 0).
\end{equation*}
\label{thm:thm_wqc}
\end{restatable}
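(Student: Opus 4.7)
The proof plan is to mimic the Lyapunov argument used for Theorem~\ref{thm:thm_cvx} in the convex case, with all occurrences of the convexity inequality replaced by the $\alpha$-weakly-quasi-convex inequality $\alpha(f(X)-f^*)\le -\langle \textnormal{gradf}(X),\log_X(x^*)\rangle$, and with the time rescaling that the coefficient $\frac{1+2\zeta/\alpha}{t}$ suggests. Concretely, I would introduce the candidate Lyapunov functional
\begin{equation*}
\epsilon(t) \;=\; \alpha^2 t^2\bigl(f(X)-f^*\bigr) \;+\; 2\Bigl\|-\log_X(x^*)+\tfrac{\alpha t}{2}\dot X\Bigr\|^2 \;+\; 2(\zeta-1)\|\log_X(x^*)\|^2,
\end{equation*}
which at $t=0$ evaluates to $2\zeta\|\log_{x_0}(x^*)\|^2$ (using $\dot X(0)=0$) and which, after showing $\dot\epsilon\le 0$, immediately yields the stated rate since its first term alone lower-bounds $\alpha^2 t^2(f(X)-f^*)$.

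Next, I would differentiate $\epsilon$ term by term along the flow. The first term contributes $2\alpha^2 t(f(X)-f^*)+\alpha^2 t^2\langle\textnormal{gradf}(X),\dot X\rangle$. For the middle term, setting $V=-\log_X(x^*)+\tfrac{\alpha t}{2}\dot X$ and invoking \eqref{eqn:quasiconvex} to substitute $\nabla\dot X=-\tfrac{1+2\zeta/\alpha}{t}\dot X-\textnormal{gradf}(X)$, the covariant derivative simplifies to
\begin{equation*}
\nabla V \;=\; -\nabla_{\dot X}\log_X(x^*) \;-\; \zeta\,\dot X \;-\; \tfrac{\alpha t}{2}\,\textnormal{gradf}(X),
\end{equation*}
so that $\frac{d}{dt}\|V\|^2 = 2\langle V,\nabla V\rangle$ expands into inner products one can control by: (i) Lemma~\ref{le:result for covariant derivative} to bound $\langle \dot X,-\nabla_{\dot X}\log_X(x^*)\rangle\le \zeta\|\dot X\|^2$ (and the companion lower bound where needed); (ii) the identity $\frac{d}{dt}\|\log_X(x^*)\|^2 = 2\langle \log_X(x^*),\nabla_{\dot X}\log_X(x^*)\rangle$ which is exactly what the third term of $\epsilon$ will absorb; and (iii) the $\alpha$-weakly-quasi-convex inequality to convert $-\langle\textnormal{gradf}(X),\log_X(x^*)\rangle$ into $\alpha(f(X)-f^*)$, killing the $2\alpha^2 t(f(X)-f^*)$ contribution from the first term.

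After collecting, the expected cancellation leaves a nonpositive remainder of the schematic form $-C\,t\|\dot X\|^2$ plus curvature residuals that the third summand $2(\zeta-1)\|\log_X(x^*)\|^2$ is tailored to annihilate. Thus $\dot\epsilon\le 0$ and $\epsilon(t)\le \epsilon(0)=2\zeta\|\log_{x_0}(x^*)\|^2$, which directly gives the claimed bound; convergence to a minimizer then follows from the fact that $f(X(t))\to f^*$ together with the assumptions that the working domain is geodesically uniquely convex and the minimum set is inside $A$.

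The step I expect to be most delicate is the cancellation in the cross-term $\langle \tfrac{\alpha t}{2}\dot X,-\nabla_{\dot X}\log_X(x^*)\rangle$: unlike the convex case, where the Hessian-of-distance lemma matches exactly the coefficient produced by the friction $\tfrac{1+2\zeta}{t}$, here one must verify that the modified friction coefficient $\tfrac{1+2\zeta/\alpha}{t}$ and the rescaled momentum $\tfrac{\alpha t}{2}\dot X$ conspire to leave exactly the right $\zeta\|\dot X\|^2$ term so that Lemma~\ref{le:result for covariant derivative} can be applied without wasting the $\alpha$-factor. The weakly-quasi-convex assumption only supplies a gradient-direction inequality (no distance term, in contrast to strong convexity), so every appearance of $\|\log_X(x^*)\|^2$ in the derivative must be traceable to a source other than convexity — this is the role of the curvature-corrected third summand, and getting its coefficient right is the main bookkeeping challenge.
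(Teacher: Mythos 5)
Your proposal is correct and follows essentially the same route as the paper's proof: the Lyapunov function $\epsilon(t)=\alpha^2 t^2(f(X)-f^*)+2\|-\log_X(x^*)+\tfrac{\alpha t}{2}\dot X\|^2+2(\zeta-1)\|\log_X(x^*)\|^2$ is identical to the one used in the appendix, your computation of $\nabla V=-\nabla\log_X(x^*)-\zeta\dot X-\tfrac{\alpha t}{2}\textnormal{gradf}(X)$ via the ODE is exactly the add-and-subtract $\zeta\dot X$ manipulation the paper performs, and the remaining cancellations (weak-quasi-convexity killing the $2\alpha^2 t(f(X)-f^*)$ term, the $\tfrac{d}{dt}d(X,x^*)^2$ contributions from the three summands summing to zero, Lemma~\ref{le:result for covariant derivative} delivering the final nonpositivity) all occur as you anticipate. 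The only cosmetic remark is that only the upper bound of Lemma~\ref{le:result for covariant derivative} is actually invoked, not the lower one.
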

\vspace{-7mm}
\end{tcolorbox}
The proof is similar to the one of the convex case and can be found in the appendix. Note here that $\alpha$ can be larger than $1$. An important specific case is the Riemannian squared distance $d(x,p)^2$, where $\alpha=2$.
\vspace{-3mm}
\subsection{The strongly-convex case}
Recall that we have a constant friction term for strongly-convex functions, which yields an ODE similar to Equation 7 in~\cite{wilson2016lyapunov} for the Euclidean case.
\begin{tcolorbox}
\begin{restatable}{theorem}{stronglyconvex}
Let $f$ be a geodesically $\mu$-strongly convex function. The solution of the differential equation
\begin{equation}
\label{eqn:stronglyconvex}
   \nabla \dot X+\left(\frac{1}{\sqrt \zeta}+ \sqrt \zeta\right) \sqrt{\mu} \dot X+\textnormal{\textnormal{gradf}}(X)=0 
\end{equation}
converges to a minimizer $x^*$ of $f$ with rate
\begin{equation*}
f(X)-f^* \leq \frac{\frac{\mu}{2} \| \textnormal{\textnormal{log}}_{x_0}(x) \|^2 +f(x_0)-f^*}{e^{\sqrt{\frac{\mu}{\zeta}}t}} \quad (t > 0).   
\end{equation*}
\label{thm:thm_sc}
\end{restatable}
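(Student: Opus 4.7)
The plan is to follow a Lyapunov-function strategy analogous to the Euclidean treatment of \cite{wilson2016lyapunov}. I would define
\[
\mathcal{E}(t) = e^{\sqrt{\mu/\zeta}\,t}\Bigl[f(X)-f^{*} + \tfrac{1}{2}\bigl\|{-}\sqrt{\mu}\,\log_X(x^{*}) + \sqrt{\zeta}\,\dot X\bigr\|^{2}\Bigr].
\]
The coupling between position and velocity is engineered so that at $t=0$, where $\dot X(0)=0$, one immediately obtains $\mathcal{E}(0) = f(x_{0})-f^{*} + \tfrac{\mu}{2}\|\log_{x_{0}}(x^{*})\|^{2}$, matching the numerator of the claimed bound. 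Since $\mathcal{E}(t)\geq e^{\sqrt{\mu/\zeta}\,t}(f(X)-f^{*})$, it suffices to prove $\mathcal{E}'(t)\leq 0$. The curvature factors $\sqrt{\zeta}$ in the velocity coupling and $\sqrt{\mu/\zeta}$ in the exponent are precisely synchronized with the friction $(\sqrt{\zeta}+1/\sqrt{\zeta})\sqrt{\mu}$ appearing in the ODE, so that at $\zeta=1$ the construction collapses to Wilson et al.'s strongly-convex Lyapunov.

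To differentiate $\mathcal{E}$, I would use $\tfrac{d}{dt}f(X)=\langle\textnormal{gradf}(X),\dot X\rangle$, the metric-compatibility identity $\tfrac{d}{dt}\|V\|^{2}=2\langle V,\nabla V\rangle$, and substitute $\nabla\dot X = -(\sqrt{\zeta}+1/\sqrt{\zeta})\sqrt{\mu}\,\dot X - \textnormal{gradf}(X)$ from the ODE. The resulting $\mathcal{E}'(t)$ is a linear combination of scalar products involving $\log_X(x^{*})$, $\dot X$, $\textnormal{gradf}(X)$ and $\nabla\log_X(x^{*})$, together with $\|\dot X\|^{2}$ and $\|\log_X(x^{*})\|^{2}$. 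I would then (i) apply strong convexity $\langle\textnormal{gradf}(X),\log_X(x^{*})\rangle\leq -(f(X)-f^{*})-\tfrac{\mu}{2}\|\log_X(x^{*})\|^{2}$ to kill the $(f-f^{*})$ and $\|\log_X(x^{*})\|^{2}$ contributions arising from the gradient coupling, and (ii) invoke Lemma~\ref{le:result for covariant derivative} in the form $\langle -\nabla\log_X(x^{*}),\dot X\rangle\leq\zeta\|\dot X\|^{2}$ to dominate the Hessian--velocity cross term. In the Euclidean limit $\zeta=1$ the remainder collapses to $-\tfrac{\sqrt{\mu}}{2}\|\dot X\|^{2}\leq 0$, reproducing the standard argument verbatim.

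The hard part is controlling the curvature-dependent cross term $\mu\langle\log_X(x^{*}),\nabla\log_X(x^{*})\rangle$, for which the Euclidean identity $\nabla\log_X(x^{*})=-\dot X$ has no Riemannian analogue; Lemma~\ref{le:result for covariant derivative} only gives a quadratic-form inequality, with no direct control over this bilinear term. I see two ways to close the argument. The first, in the spirit of the convex-case Lyapunov (Theorem~\ref{thm:thm_cvx}), is to augment $\mathcal{E}$ by an extra summand of the form $\alpha(\zeta-1)\|\log_X(x^{*})\|^{2}$ whose covariant derivative $2\alpha(\zeta-1)\langle\log_X(x^{*}),\nabla\log_X(x^{*})\rangle$ supplies exactly the term needed to cancel the offending contribution; one must then verify that $\alpha\geq 0$ can be chosen without breaking the initial-value calibration. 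The second is to exploit that $\nabla\log_X(x^{*})$ is the Hessian of $-\tfrac{1}{2}d(\cdot,x^{*})^{2}$ applied to $\dot X$, a symmetric operator whose norm is bounded by $\zeta$ via Lemma~\ref{le:result for covariant derivative}, and to absorb the cross term via Young's inequality into the negative $\|\dot X\|^{2}$ and $\|\log_X(x^{*})\|^{2}$ slack produced by the friction. The precise friction coefficient $\sqrt{\zeta}+1/\sqrt{\zeta}$ is not accidental: it is tuned exactly so that, after these reductions, the curvature-generated cross-products telescope and the total is manifestly non-positive, giving $\mathcal{E}'(t)\leq 0$ and hence the exponential decay rate $\exp(-\sqrt{\mu/\zeta}\,t)$.
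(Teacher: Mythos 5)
Your high-level strategy (exponentially weighted Lyapunov function augmented by a curvature term, strong convexity plus Lemma~\ref{le:result for covariant derivative} to sign the derivative) is the same as the paper's, and you correctly identify the bilinear cross term $\langle\log_X(x^{*}),\nabla\log_X(x^{*})\rangle$ as the crux. However, there are two concrete gaps that would prevent your argument from closing.

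First, your Lyapunov function is mis-tuned. Expanding your squared term gives $\frac{\mu}{2}\|\log_X(x^{*})\|^2-\sqrt{\mu\zeta}\,\langle\log_X(x^{*}),\dot X\rangle+\frac{\zeta}{2}\|\dot X\|^2$, so the kinetic term carries coefficient $\zeta/2$. The paper's version, $\frac{\mu}{2\zeta}\|-\log_X(x^{*})+\sqrt{\zeta/\mu}\,\dot X\|^2$, expands to $\frac{\mu}{2\zeta}\|\log_X(x^{*})\|^2-\sqrt{\mu/\zeta}\,\langle\log_X(x^{*}),\dot X\rangle+\frac{1}{2}\|\dot X\|^2$, with kinetic coefficient $1/2$. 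This $1/2$ is essential: when the ODE is substituted into $\zeta\langle\dot X,\nabla\dot X\rangle$ (in your case) versus $\langle\dot X,\nabla\dot X\rangle$ (in the paper's), only the latter produces $-\langle\textnormal{gradf}(X),\dot X\rangle$, which exactly cancels $\frac{d}{dt}f(X)=\langle\textnormal{gradf}(X),\dot X\rangle$. With your choice, you are left with an unsigned residual $(1-\zeta)\langle\textnormal{gradf}(X),\dot X\rangle$ that cannot be controlled without introducing $L$-dependence, which the theorem does not permit.

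Second, the tool that actually resolves the ``hard'' bilinear cross term is not Lemma~\ref{le:result for covariant derivative} but Lemma~\ref{le:derivative of intrinsic distance} (derivative of the Riemannian squared distance): since $\|\log_X(x^{*})\|^2=d(X,x^{*})^2$, metric compatibility gives $2\langle\log_X(x^{*}),\nabla\log_X(x^{*})\rangle=\frac{d}{dt}d(X,x^{*})^2=2\langle\log_X(x^{*}),-\dot X\rangle$, i.e. the \emph{exact identity} $\langle\log_X(x^{*}),\nabla\log_X(x^{*})\rangle=-\langle\log_X(x^{*}),\dot X\rangle$. This is the Riemannian analogue you said does not exist. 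Once this identity is used, the offending term and the augmentation term $(\zeta-1)\langle\log_X(x^{*}),-\dot X\rangle$ both become multiples of $(d(X,x^{*})^2)'$ and cancel, leaving only the quadratic form $\langle-\nabla\log_X(x^{*}),\dot X\rangle-\zeta\|\dot X\|^2\le 0$ which Lemma~\ref{le:result for covariant derivative} handles. Your first workaround (choose $\alpha$ to cancel) actually forces $\alpha(\zeta-1)=-\mu/2<0$ for $\zeta>1$, which destroys the lower bound $\mathcal{E}(t)\ge e^{\sqrt{\mu/\zeta}t}(f(X)-f^{*})$; your second workaround (Young's inequality on the cross term) introduces $L$-dependence. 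Both paths fail, and the missing ingredient is the squared-distance chain rule, not a more clever inequality.
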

\vspace{-6mm}
\end{tcolorbox}
\vspace{-2mm}
\begin{proof}[Proof sketch]
The proof (see appendix) shows that the following energy function is monotically decreasing:
\vspace{-2mm}
\begin{multline*}
    \epsilon(t) = e^{\sqrt{\frac{\mu}{\zeta}}t}\Big(\frac{\mu}{2 \zeta} \| -\textnormal{\textnormal{log}}_X(x^*)+\sqrt{\zeta/\mu} \dot X \|^2\\  f(X)-f^*+\frac{\mu (\zeta-1)}{2 \zeta} \| \textnormal{\textnormal{log}}_X(x^*) \|^2 \Big).
\end{multline*}
\vspace{-3mm}
\end{proof}
\vspace{-3mm}
Note that the constant $\sqrt{\zeta}+\frac{1}{\sqrt{\zeta}}$ is always greater or equal than $2$ and equality holds only when $\zeta=1$, in which case we recover the Euclidean formulation.

\subsection{Comparison to the Euclidean case}
Compared to the ODE derived in~\cite{su2014differential}, the second derivative of the curve $X$ has been substituted with the covariant derivative of the vector field $\dot X$. This is the usual intrinsic way to capture second order changes on manifolds. The Lyapunov functions chosen in the convergence analysis are such that the covariant derivative arises when taking its derivative, which explains why the results derived in Section~\ref{sec:Hessian_distance_function} are needed in our analysis.
Also interesting is the effect of the curvature: we note that it is involved in both the friction term of the ODE and in the convergence rates. The positive-curvature case matches the Euclidean one, while the negative-curvature case yields worse constants in terms of theoretical guarantees. This seems to validate the intuition that convergence is easier in spaces with larger curvature, which is also consistent with the results of~\cite{zhang2016first}.


\section{Discretization}
In this section, we design and test a Nesterov-inspired semi-implicit integration scheme that translates the ODEs above into implementable accelerated optimization methods. Starting from the general ODE $\nabla \dot X + \alpha(t) \dot X + \textnormal{gradf}(X)=0$ and following the Euclidean modus operandi \cite{shi2019acceleration,betancourt2018symplectic}, our first step is to introduce a velocity variable $V = \dot X$. Hence, we can write $\nabla V = -\alpha(t) V -\textnormal{gradf}(X)$.

The semi-implicit Euler method in Euclidean spaces is a numerical integrator tailored to second-order ODEs, which leverages on the velocity/position decomposition and is widely used in physics because of its energy and volume conservation properties, that in turn imply good stability and small integration errors~\cite{hairer2006geometric}. This scheme consists of a standard forward-Euler update on the velocity variable $v_k$, followed by an update on the position variable $x_k$ using the just updated value of the velocity, i.e. $v_{k+1}$. Namely, if $M = \R^d$, we have
\vspace{-1mm}
\begin{equation}
    \begin{cases}
         v_{k+1} = \beta_k v_k -h \nabla f(x_k)\\
         x_{k+1} = x_k +h v_{k+1}
    \end{cases}
    \label{eq:SIE_Euclidean}
\end{equation}
where $\beta_k := 1-h\alpha(kh)$ is the momentum parameter and $h$ is the integration step-size which, if small enough, guarantees \footnote{For a \textit{fixed} interval $[0,T]$ with $T=Kh$ ($K\in\mathbb{N}$), we have $\|X(kh)-x_k\|=O(h)$ for all $0\le k\le K$~~\cite{hairer2006geometric}. However, the notation hides an exponential dependency on $T$: i.e., does not imply shadowing (see Section \ref{sec:shadowing}).} $X(kh)\approxeq x_k$. Inspired from the recent success of similar integrators in yielding accelerated algorithms~\cite{shi2019acceleration,maddison2018hamiltonian}, we provide a simple adaptation of the semi-implicit method to the Riemannian setting in the next lines. 

\begin{algorithm}
\caption{SIRNAG}
\label{alg:acc}
\begin{algorithmic}[1]
\STATE $x_0 \gets $ random point on $M$;
\STATE $v_0 \gets 0\in T_{x_0}M$;
\STATE $h \ \gets $ some small number $>0$ (integration step);
\IF {geod.~strongly-convexity} \STATE $\beta_k \gets 1-h\frac{(1+\zeta)\sqrt\mu}{\sqrt\zeta};$
\ELSIF{geod.~weak-quasi-convexity}
\STATE $\beta_k \gets \frac{k-1}{k+2\zeta/\alpha};$
\ENDIF
\FOR {$k \geq 0$}
      \STATE \textbf{Option I}: \ $a_k \gets \beta_k v_{k} - h \textnormal{gradf}(x_k)$;
      \STATE \textbf{Option II}: $a_k \gets \beta_k v_{k} - h\textnormal{gradf}(\textnormal{exp}_{x_k}(h \beta_k v_k))$
      \STATE $x_{k+1} \gets \textnormal{exp}_{x_k}(h a_k)$;
      \STATE $v_{k+1} \gets \Gamma_{x_k}^{x_{k+1}}a_k$;
\ENDFOR
\end{algorithmic}
\end{algorithm}

We start by noting that, since we require $v_k \in T_{x_k}M$ for all $k$, our method will have to include parallel transport of velocity vectors along the geodesics of the manifold. However, we can postpone this operation to the very end: indeed, if we let $a_k := \beta_k v_k - h\textnormal{gradf}(x_k)$, then $a_k\in T_{x_k}M$ and we can update the position directly using a forward-Euler step: $x_{k+1} = \textnormal{exp}_{x_k}(h a_k)$. To conclude, we need to transport the just used velocity $a_k$ to $T_{x_{k+1}}M$: $v_{k+1} = \Gamma_{x_k}^{x_{k+1}} a_k$. 

We summarize the content of the last lines in Algorithm~\ref{alg:acc} (with Option I) and provide a variant (Option II), inspired by the reformulation of Nesterov's method provided by~\cite{sutskever2013importance}. In this seminal paper the authors showed that Equation~\eqref{eq:SIE_Euclidean} is exactly Nesterov's method~\cite{nesterov2018lectures} once we replace $\nabla f(x_k)$ with $\nabla f(x_k + h \beta_k v_k)$ (the so-called \textit{corrected gradient}). In our setting, we can similarly use $\textnormal{gradf}(\textnormal{exp}_{x_k} (h \beta_k v_k))$. \textbf{As a result, Algorithm~\ref{alg:acc} with Option II \textit{reduces to Nesterov's method} when $M=\R^d$.}
 
\begin{figure}
\centering
\includegraphics[width=0.82\linewidth]{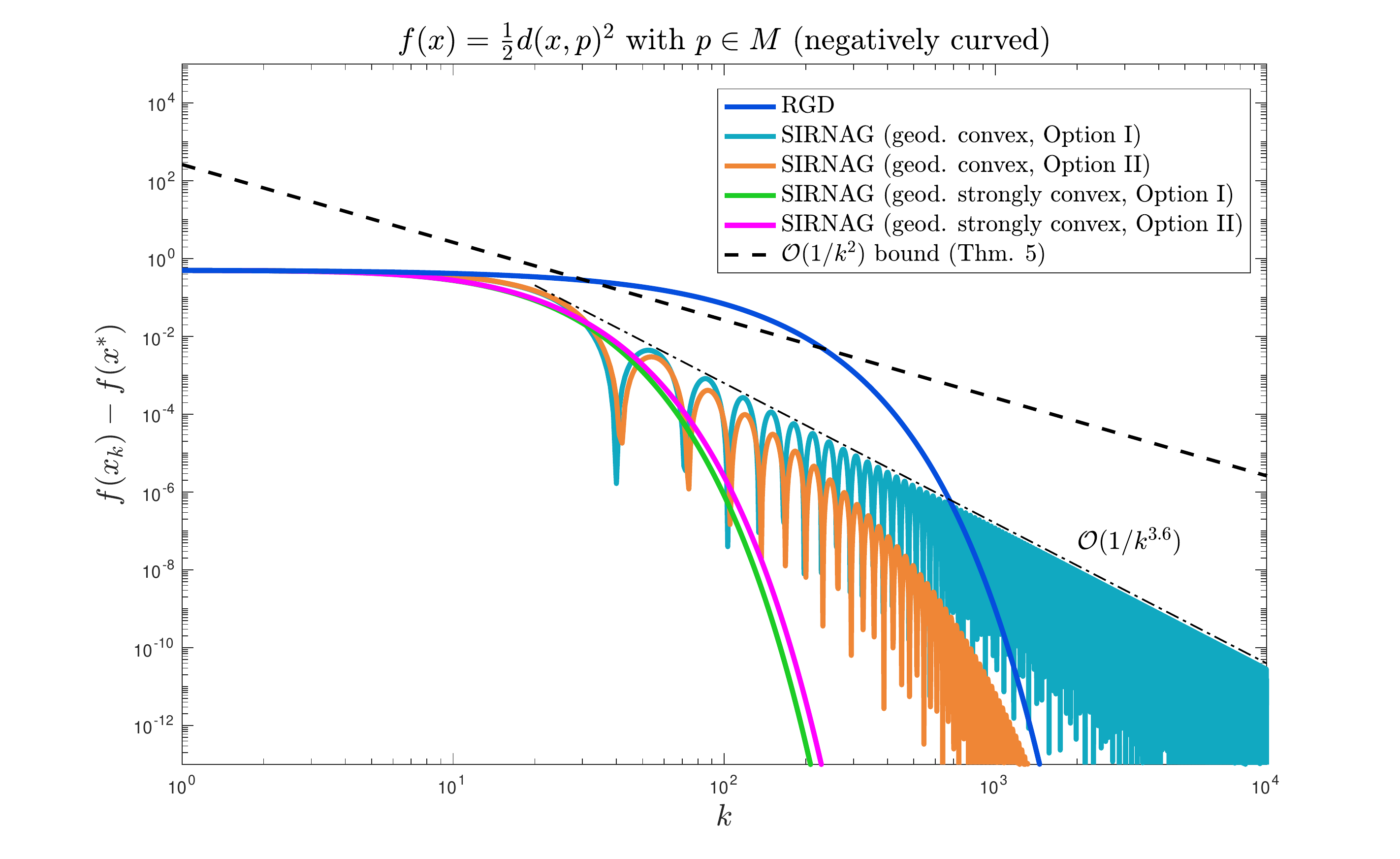}
\vspace{-2mm}
\caption{\small Dynamics of SIRNAG ($h = 0.1$) and RGD ($\eta = h^2$, see footnote~\ref{note_step}) on a subset of diameter $D=1$ of the hyperbolic space $M=\mathbb{H}^2$ ($K=-1$, hence $\zeta = \coth(1) \approxeq 1.313$) equipped with the convex (actually, strongly convex) toy  function $f(x) = \frac{1}{2} d(x,p)^2$ for $p\in M$. Plotted is also the bound found in Theorem~\ref{thm:thm_cvx}, discretized.}
\label{fig:hyperbolic_result}
\end{figure}
\vspace{-3mm}
\paragraph{Experiments.}Inspired by the relevance of hyperbolic geometry in machine learning~\cite{zhang2018direct,sra2015conic}, we start our empirical study by illustrating some properties of SIRNAG on manifolds with constant \textit{negative curvature}. Figure~\ref{fig:hyperbolic_result} shows that our integrator is stable and can achieve, on simple functions, a rate that is actually faster than the prediction of Theorem~\ref{thm:thm_cvx}, in perfect agreement with previous observations for similar costs in the Euclidean setting~\cite{zhang2018direct,betancourt2018symplectic}. Moreover, as expected, Option II provides a speed-up\footnote{Actually Option II in the geodesically strongly-convex case seems a bit slower. This happens because $f$ is of a very particular form, and is well known in the Euclidean literature (see e.g. Proposition 1 in~\cite{lessard2016analysis}).} over Option I because it is closer to the original Nesterov's method. Next, to test the tightness of the oracle bound provided by Theorem~\ref{thm:thm_cvx}, we use our algorithm to solve a high-dimensional eigenvalue problem. Indeed, the leading unit eigenvector of a symmetric matrix $Q \in \mathbb{R}^{m\times m}$ maximizes $x^T Q x$ over the unit sphere $M = \mathbb{S}^{m-1}$ (constant \textit{positive curvature}). It is well known~\cite{dieuleveut2017harder} that such objectives, when $M=\mathbb{R}^m$, are hard to optimize if $Q$ is high-dimensional and ill-conditioned, and are therefore able to truly showcase the acceleration phenomenon\footnote{Indeed, high dimensional quadratics are used to build lower bounds in convex optimization~\cite{nesterov2018lectures}.} for convex but not necessarily strongly convex functions. Figure~\ref{fig:sphere_result} shows that this fact translates to the manifold setting: indeed, the suboptimality of SIRNAG decays as $1/k^2$ --- as predicted by our continuous-time analysis --- in contrast to RGD\footnote{\label{note_step}For RGD we used a stepsize (i.e. a gradient multiplication factor) $\eta\le 1/\lambda_{\text{max}}(Q)$, where $\lambda_{\text{max}}(Q)$ is the maximum eigenvalue of $Q$. This is the standard choice in the Euclidean setting, also motivated by the results in~\cite{zhang2016first}. To get the same gradient multiplication factor and correspondence with the optimal parameters is Nesterov's method, in SIRNAG we choose $h = \sqrt{1/\lambda_{\text{max}}(Q)}$. For further details, we direct the reader to the first few pages of~\cite{su2014differential}.} which behaves like $\mathcal{O}(1/k)$. To conclude, as an ultimate test for our discretization procedure, we verify the convergence of SIRNAG to NAG-ODE as $h\to0$ in Figure~\ref{fig:approx}.\\
The code to reproduce the experiments above is available online\footnote{ \scriptsize\url{https://github.com/aorvieto/riemann-continuous.git}}.

\begin{figure}
\centering
\includegraphics[width=0.7\linewidth]{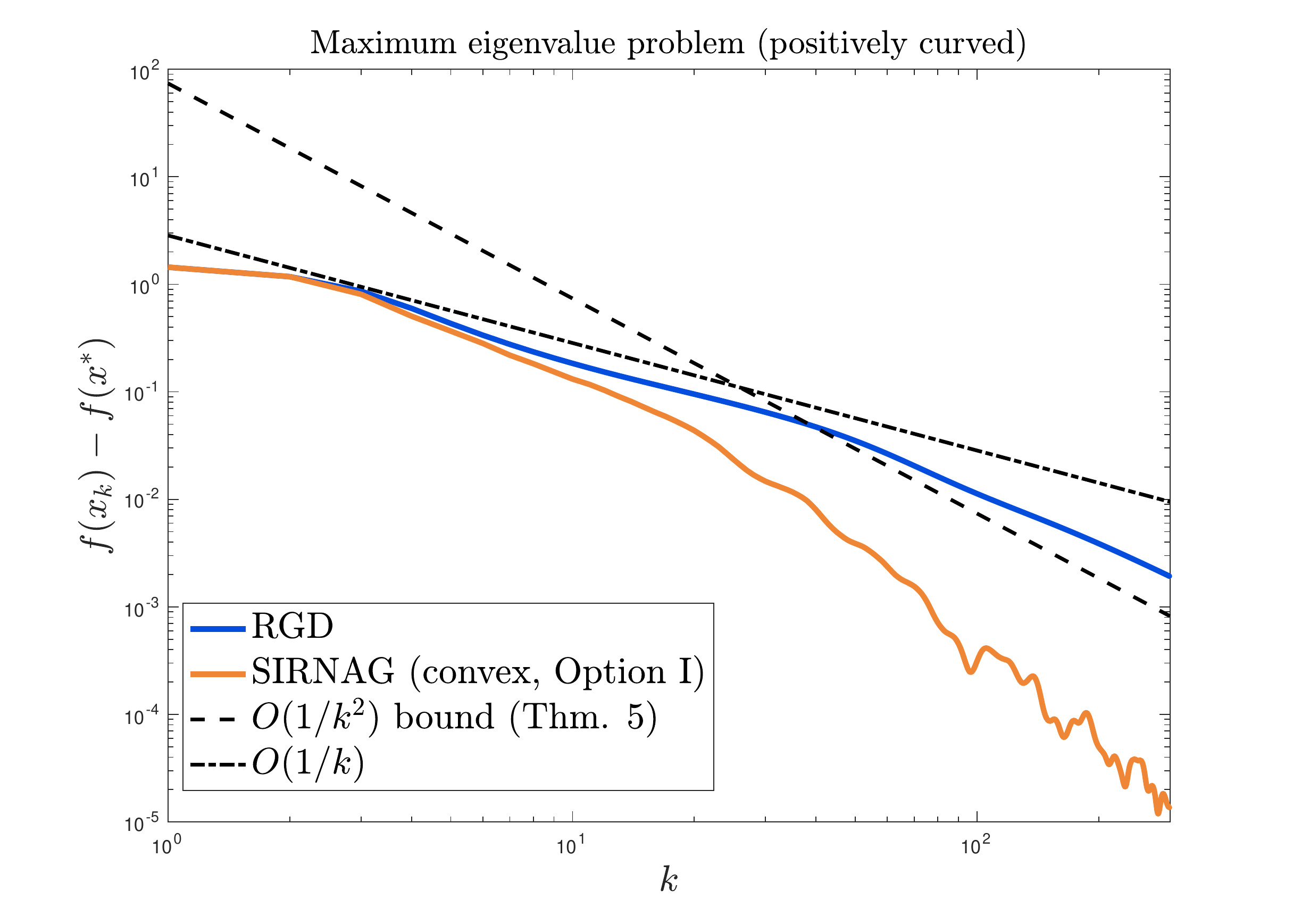}
\vspace{-2mm}
\caption{\small Performance of SIRNAG (convex, i.e. $\beta_k = \frac{k-1}{k+2\zeta}$) against RGD in finding the maximum eigenvalue of a $5$-thousand dimensional ill-conditioned matrix. Plotted is also the bound found in Theorem~\ref{thm:thm_cvx}, discretized.}
\label{fig:sphere_result}
\end{figure}

\begin{figure}
\centering
\includegraphics[width=0.8\linewidth]{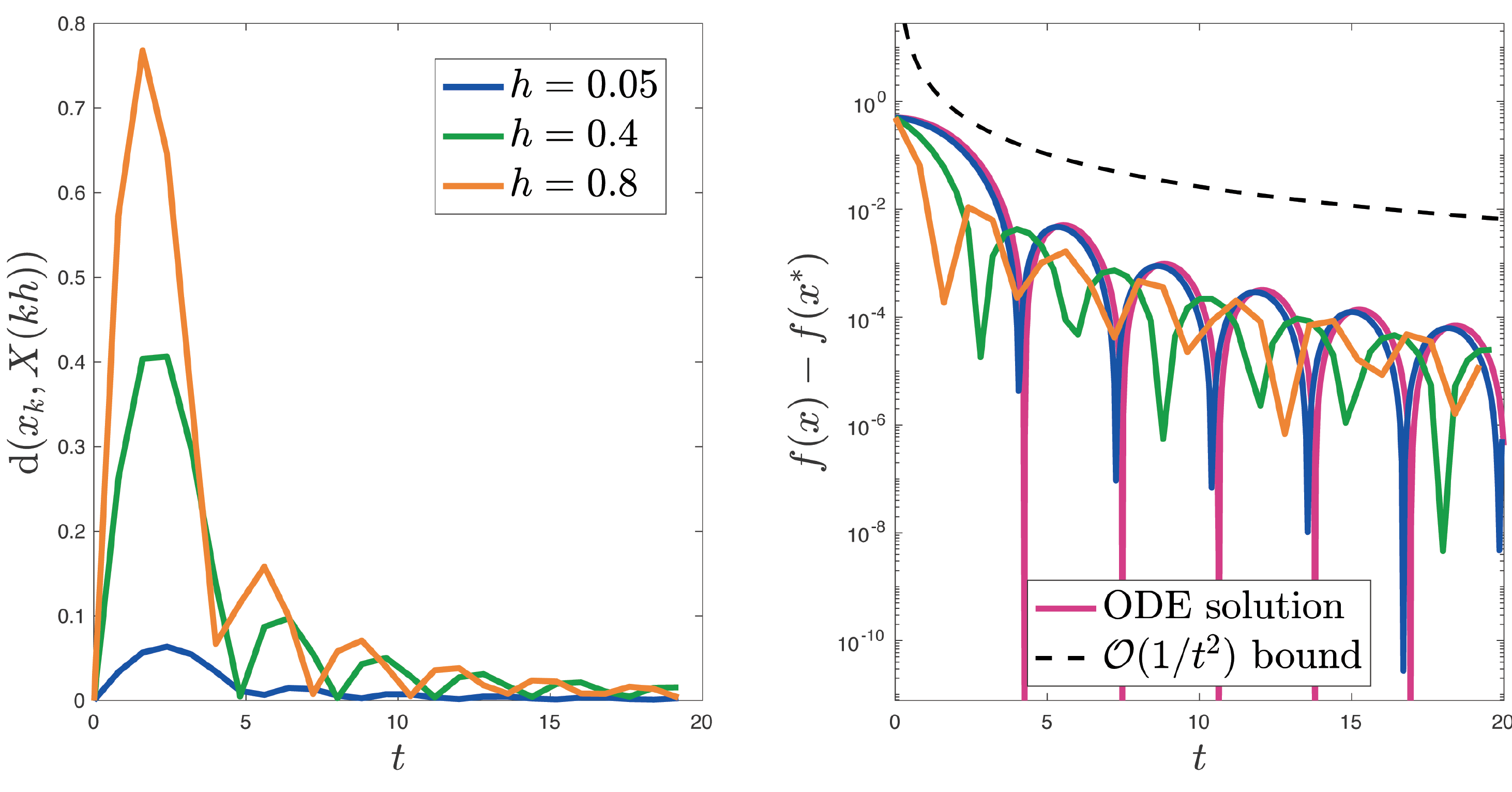}
\vspace{-2mm}
\caption{\small Convergence of SIRNAG (Option I) to the solution of Equation~\eqref{eq:ODE_convex}, same settings as Figure~\ref{fig:hyperbolic_result}. Solution to the ODE approximated by SIRNAG (Option I) with an extremely small integration step: $h = 10^{-5}$. The error peak is proportional to the step-size~(see next section).}
\label{fig:approx}
\end{figure}


\vspace{-2mm}
\section{Shadowing in model spaces}
\label{sec:shadowing}
\vspace{-2mm}

So far, we have shown that the discretization of our second-order ODE empirically exhibits an accelerated rate of convergence and follows the continuous-time limit. The reader might wonder whether any theoretical guarantee can be established to bound the error between the continuous-time and discrete-time process (i.e. predict the results of Figure~\ref{fig:approx}). In the following, we will show that such guarantees can be obtained for a descent method such as RGD when compared to its limiting ODE~(studied in~\cite{munier2007steepest}). Further, in the next section, we discuss why the extension to accelerated methods is non-trivial. We will rely on the shadowing lemma for metric spaces~\cite{ombach1993simplest,brin2002introduction} and use the contraction property of RGD, as well as common concepts from the theory of dynamical systems~\cite{brin2002introduction}. We briefly review the required definitions and we refer the reader to~\cite{Antonioshadowing} for detailed explanations.
We consider a dynamical system on a Riemannian manifold $M$, i.e. a map $\Psi: M \to M$.
\begin{definition}
A sequence $(x_k)_{k=0}^\infty$ is an \textbf{orbit} of $\Psi$ if, for all $k\in\N$, $x_{k+1}=\Psi(x_{k}).$
\end{definition}

\begin{definition}
A sequence $(y_k)_{k=0}^\infty$ is a \textbf{$\boldsymbol{\delta}-$pseudo-orbit} of $\Psi$ if, for all $k\in\N$,   $d(y_{k+1},\Psi(y_{k}))\le \delta.$ 
\label{def:pseudo_orbit}
\end{definition}
\begin{definition}
A pseudo-orbit $(y_k)_{k=0}^\infty$ of $\Psi$ is \textbf{$\boldsymbol{\epsilon}-$shadowed} if there exists an orbit $(x_k)_{k=0}^\infty$ of $\Psi$ such that, for all $k\in\N$, $d(x_k,y_k)\le \epsilon$.
\label{def:shadowing}
\end{definition}
In this section, we pick $\Psi$ to be the dynamical system associated with Riemannian gradient descent, which maps $x$ to $\textnormal{exp}_x(-h \textnormal{gradf}(x))$. Its orbit $(x_k)_{k=0}^\infty$ is a sequence of iterates returned by RGD. As a candidate pseudo-orbit, we pick $(y_k)_{k=0}^\infty$ to the sequence of points derived from the iterative application of $\varphi_h$ --- the time-$h$ flow of the ODE $\dot y=-\textnormal{gradf}(y), y(0)=y_0\in M$), which is itself a dynamical system. The latter sequence represents our ODE approximation of the algorithm $\Psi$. Our goal in this subsection is to show that, under some conditions, the sequence $(y_k)_{k=0}^\infty$ is \textit{close to} an orbit of $\Psi$, uniformly in $k$ --- i.e. that it is shadowed by $\Psi$. To prove this result, we need a fundamental lemma.

\begin{lemma}(Contraction map shadowing~\cite{ombach1993simplest})
   Assume that $\Psi$ is uniformly contracting with constant $0<\rho<1$. Then, for every $\epsilon > 0$, there exists $\delta > 0$ such that every $\delta$-pseudo-orbit $(y_k)_{k=0}^{\infty}$
of $\Psi$ is $\epsilon$-shadowed by the orbit
$(x_k)_{k=0}^{\infty}$ of $\Psi$ starting at $x_0 = y_0$. Moreover, $\delta \leq (1 - \rho) \epsilon$.
\end{lemma}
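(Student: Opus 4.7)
The plan is to track the error sequence $e_k := d(x_k, y_k)$ and to show it satisfies a simple linear recursion whose geometric tail is controlled by $\delta$. Since the orbit starts at $x_0 = y_0$, we automatically have $e_0 = 0$, so the only job is to propagate this smallness step by step.

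The key step is a one-step error bound obtained by inserting $\Psi(y_k)$ via the triangle inequality and then applying the two hypotheses. Concretely, for every $k \geq 0$,
\begin{align*}
d(x_{k+1}, y_{k+1}) &\leq d(x_{k+1}, \Psi(y_k)) + d(\Psi(y_k), y_{k+1}) \\
&= d(\Psi(x_k), \Psi(y_k)) + d(\Psi(y_k), y_{k+1}) \\
&\leq \rho\, d(x_k, y_k) + \delta,
\end{align*}
where the first term uses that $(x_k)$ is an orbit of $\Psi$ together with uniform contraction, and the second uses the definition of a $\delta$-pseudo-orbit.

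From here I would carry out a straightforward induction (or equivalently unroll the recursion as a geometric sum) to get
\begin{equation*}
e_k \leq \delta \sum_{i=0}^{k-1} \rho^i \leq \frac{\delta}{1-\rho} \qquad \text{for all } k \geq 0.
\end{equation*}
Given $\epsilon > 0$, choosing any $\delta \leq (1-\rho)\epsilon$ (for instance $\delta = (1-\rho)\epsilon$) then forces $e_k = d(x_k,y_k) \leq \epsilon$ uniformly in $k$, which is exactly the $\epsilon$-shadowing property of Definition~\ref{def:shadowing}, and delivers the explicit quantitative bound $\delta \leq (1-\rho)\epsilon$ claimed in the statement.

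Honestly, I do not expect any real obstacle here: the entire argument lives in the metric structure of $M$, uses only the triangle inequality plus the contraction hypothesis, and the Riemannian structure plays no role beyond providing the distance $d$. The only mild subtlety worth flagging in the write-up is that the initialization $x_0 = y_0$ is essential to obtain a bound that is uniform in $k$ with no additive $\rho^k e_0$ term; if the shadowing orbit were allowed to start elsewhere, one would instead need $\delta \leq (1-\rho)\epsilon - (1-\rho)\rho^k e_0$-type corrections, or one would invoke completeness of $M$ to select a good starting point. Since the lemma conveniently specifies $x_0 = y_0$, this complication does not arise.
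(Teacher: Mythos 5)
Your proof is correct, and the argument is the standard one for the contracting case of the shadowing lemma. Note, however, that the paper itself gives no proof of this statement: it is cited directly from Ombach (1993) and used as a black box to prove the downstream shadowing theorem for RGD. So there is nothing in the paper to compare your derivation against; what you have written is essentially the textbook argument that Ombach's reference would contain in this special case. The one-step bound $d(x_{k+1},y_{k+1}) \leq \rho\, d(x_k,y_k) + \delta$ via the triangle inequality through $\Psi(y_k)$ is exactly the right decomposition, unrolling it to $e_k \leq \delta/(1-\rho)$ is immediate since $e_0 = 0$, and the choice $\delta = (1-\rho)\epsilon$ gives the stated quantitative bound. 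Your closing remark about the role of the initialization $x_0 = y_0$ is also accurate: without it one would pick up an extra $\rho^k e_0$ term, which is benign but would slightly change the relation between $\delta$, $\epsilon$, and $e_0$. No gaps; the proof lives entirely in the metric structure as you say, and the Riemannian geometry enters only through the distance $d$.
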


To use this result, we first need to prove that the ODE orbit $(y_k)_{k=0}^\infty$ is actually a pseudo-orbit of $\Psi$. This result is standard in numerical analysis, and can be also found (in a less general form) as Proposition 2 in~\cite{absil2012projection}.
We assume, in analogy with~\cite{Antonioshadowing}, that $f:M \rightarrow \mathbb{R}$ is a $C^2$ function such that\footnote{This easily holds if $f$ is geodesically $\mu$-strongly convex and $L$-smooth. In this case, $\ell$ depends on the initial condition $y_0$ and on $L$.} for all points on the ODE solution, $\| \textnormal{gradf}(x) \| \leq \ell$ and $\mu \leq \| \textnormal{Hessf}(x) \| \leq L$.

\begin{restatable}{Proposition}{pseudoorbit}There exists a constant $C$, independent of $h$ but dependent on $\ell,L$ and the Riemannian structure of $M$, such that, for any $y_0\in M$ and $k\in\mathbb{N}$,
\vspace{-2mm}
$$d(y_{k+1},\textnormal{exp}_{y_k}(-h\textnormal{gradf}(y_k))) \leq C h^2.$$
\end{restatable}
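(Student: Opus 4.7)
The plan is to reduce the claim to a classical local truncation error estimate carried out in normal coordinates. Fix $k$ and consider the two curves on $[0,h]$: $y(t) := \varphi_t(y_k)$, the ODE solution with $y(h) = y_{k+1}$, and $\gamma(t) := \textnormal{exp}_{y_k}(-t\,\textnormal{gradf}(y_k))$, the geodesic starting at $y_k$ with initial velocity $-\textnormal{gradf}(y_k)$. By construction $y(0) = \gamma(0) = y_k$ and $\dot y(0) = \dot \gamma(0) = -\textnormal{gradf}(y_k)$, so the two curves agree to zeroth and first order at $t=0$. The task is thus to bound the second-order discrepancy $d(y(h),\gamma(h))$.

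I would work in normal coordinates centered at $y_k$, which exist on a ball of radius $\textnormal{inj}(y_k)$. In these coordinates $\gamma(t)$ is literally the straight segment $-t\,\textnormal{gradf}(y_k)$. The curve $y$ is $C^2$ in $t$ (since $f\in C^2$), so componentwise Taylor with integral remainder gives, for each coordinate $i$,
\begin{equation*}
 y^i(h) = y^i(0) + h\,\dot y^i(0) + \int_0^h (h-s)\,\ddot y^i(s)\, ds.
\end{equation*}
Since $y(0) = 0$ and $\dot y(0) = -\textnormal{gradf}(y_k)$ in these coordinates, this becomes $y(h) - \gamma(h) = \int_0^h (h-s)\,\ddot y(s)\,ds$, so a uniform bound on $\|\ddot y(s)\|$ suffices. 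Differentiating the coordinate form $\dot y^i = -g^{ij}(y)\,\partial_j f(y)$ once more in $t$ produces an expression involving the metric, its Christoffel symbols and $\textnormal{gradf}, \textnormal{Hessf}$; intrinsically, this is just $\nabla_{\dot y}\dot y = -\textnormal{Hessf}(y)\dot y$, whose norm is bounded by $L\ell$. Combined with the bound on the coordinate-to-intrinsic conversion, this gives $\|\ddot y(s)\|_{\textnormal{Euc}} \leq C_1$ on $[0,h]$, hence $\|y(h)-\gamma(h)\|_{\textnormal{Euc}} \leq C_1 h^2/2$. Finally, Gauss' lemma and standard Jacobi field comparison show that in normal coordinates the metric tensor differs from the Euclidean one by a curvature-dependent factor on the ball where the coordinates live, so $d(y(h),\gamma(h)) \leq C h^2$.

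The main obstacle is ensuring that the constant $C$ does not depend on $k$. This requires that the same uniform bounds on the metric coefficients, on Christoffel symbols, on the injectivity radius, and on $\|\textnormal{gradf}\|,\|\textnormal{Hessf}\|$ are available at every $y_k$. All of these are provided by the standing assumptions of Section~\ref{sec:accelerated_ode}: $A$ is a geodesically uniquely convex set of bounded diameter with sectional curvature bounded (from below by Assumption~1 and from above by the construction of the working domain in Section~4), which yields uniform bounds on Jacobi fields and hence on both the injectivity radius and the Euclidean-to-Riemannian conversion factor; and the hypothesis of the proposition supplies the $\ell,L$ bounds on $\textnormal{gradf},\textnormal{Hessf}$ along the ODE trajectory. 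Under these conditions $C_1$ and the conversion factor can be chosen independently of $k$, producing a single constant $C$ depending only on $\ell,L$ and the Riemannian structure of $M$, as claimed.
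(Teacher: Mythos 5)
Your argument is correct in outline but follows a genuinely different route from the paper's. You Taylor-expand in \emph{normal coordinates} centered at $y_k$, exploiting that the geodesic is literally a straight segment there so the entire discrepancy sits in the integral remainder of $y$. The paper instead invokes the Nash–Kuiper embedding to view $M \subset \R^n$, Taylor-expands \emph{both} curves in the ambient Euclidean space, and decomposes $\ddot y - \ddot\gamma$ via the Gauss–Weingarten formula into a tangential part ($\nabla\dot y$, bounded by $\ell L$) and normal parts given by the second fundamental form $\mathcal{L}$, whose uniform bound is then justified via bounded sectional curvature and injectivity radius. Your route is more intrinsic and avoids the embedding machinery; the paper's route avoids having to control the coordinate-dependent Christoffel symbols. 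Neither is obviously cleaner — both push the hard geometric content into a uniform bound (you: metric and Christoffel symbols in normal coordinates; paper: the second fundamental form) that is asserted rather than derived, with the paper citing references for the latter.

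One point you should tighten: the coordinate second derivative $\ddot y^i$ is \emph{not} intrinsically equal to $(\nabla_{\dot y}\dot y)^i$; they differ by $\Gamma^i_{jk}(y)\dot y^j\dot y^k$. You acknowledge the Christoffel terms in passing but then identify the expression with $\nabla_{\dot y}\dot y$ and invoke only the $\ell L$ bound. You need to keep the $\Gamma^i_{jk}\dot y^j\dot y^k$ piece explicit and bound it separately; this requires a uniform bound on the Christoffel symbols in normal coordinates on a ball of fixed radius, which (like the paper's second-fundamental-form bound) follows from the curvature and injectivity-radius assumptions but should be stated rather than absorbed into the phrase ``coordinate-to-intrinsic conversion,'' since the latter ordinarily refers only to the metric coefficients, not their derivatives.
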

\vspace{-2mm}
Last, we need to prove that $\Psi$ is uniformly contracting. We state the result for manifolds of constant curvature $K$ and note that passing to the bounded-curvature case can be done easily by Rauch comparison theorems. 
\vspace{-2mm}
We start by defining the following quantities:
\begin{equation*}
\small
\zeta:=
\begin{cases} 1 &, K \geq 0  \\
     \sqrt{-K}D \coth(\sqrt{-K}D) &, K<0
     \end{cases}
\end{equation*}
\begin{equation*}
\small
     \lambda:= \begin{cases} 1 &, K \geq 0  \\
     \sinh(\sqrt{-K}D)/(\sqrt{-K}D) &, K<0
     \end{cases}
\end{equation*}

\begin{restatable}{lemma}{contraction}
\label{le:contraction}
Let $x_1,x_2 \in M$, where $M$ is a Riemannian manifold of constant curvature $K$ and $\textnormal{diam}(M) \leq D$. If $K>0$ we further assume that $D< \frac{\pi}{\sqrt{K}}$. Then, for $\xi:=\lambda(\zeta-h\mu)$ we have
\vspace{-1mm}
{\small
\begin{align*}
    d(\textnormal{exp}_{x_1}(-h \textnormal{\textnormal{gradf}}(x_1)),\textnormal{exp}_{x_2}(-h \textnormal{\textnormal{gradf}}(x_2))) \leq \xi d(x_1,x_2),
\end{align*}}
\end{restatable}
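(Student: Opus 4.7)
First, parametrize the minimizing geodesic $\gamma:[0,1]\to M$ from $x_1$ to $x_2$ with $\|\gamma'(s)\|\equiv d(x_1,x_2)$, and consider the image curve $\beta(s) := \exp_{\gamma(s)}(-h\,\operatorname{grad}f(\gamma(s)))$, which connects the two points whose distance we want to bound. Since $d(\Psi(x_1),\Psi(x_2)) \leq \int_0^1 \|\beta'(s)\|\,ds$, the proof reduces to establishing the pointwise bound $\|\beta'(s)\| \leq \lambda(\zeta-h\mu)\|\gamma'(s)\|$ and integrating.

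To compute $\beta'(s)$, I introduce the two-parameter variation $V(s,t) := \exp_{\gamma(s)}(-th\,\operatorname{grad}f(\gamma(s)))$. For each fixed $s$, the $t$-curve is a geodesic of constant speed $r_s := h\|\operatorname{grad}f(\gamma(s))\|\leq D$, so $J(s,t) := \partial_s V(s,t)$ is a Jacobi field along this geodesic. Using the torsion-free identity $\nabla_t\partial_s = \nabla_s\partial_t$, its initial data read
\[
J(s,0) = \gamma'(s), \qquad (\nabla_t J)(s,0) = -h\,\operatorname{Hess}f(\gamma(s))\,\gamma'(s),
\]
and $\beta'(s) = J(s,1)$.

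Because $M$ has constant curvature $K$, I would then decompose $J$ into components parallel and perpendicular to $\partial_t V$ and solve the Jacobi equation explicitly; the result is a closed form in terms of $\sin,\cos$ (for $K>0$), $\sinh,\cosh$ (for $K<0$) or polynomials (for $K=0$) evaluated at $\sqrt{|K|}\,r_s$. Using $r_s\leq D$ and the monotonicity of these functions, the coefficient acting on $J(s,0)$ is bounded by $\cosh(\sqrt{-K}D)$ and the coefficient acting on $(\nabla_t J)(s,0)$ by $\sinh(\sqrt{-K}D)/(\sqrt{-K}D)$ in the negative-curvature case. The key algebraic identity $\lambda\zeta = \cosh(\sqrt{-K}D)$ (which follows from $\sinh\cdot\coth = \cosh$) lets me write the first coefficient as $\lambda\zeta$ and the second as $\lambda$, so the overall factor $\lambda$ cleanly extracts. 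Then the geodesic $\mu$-strong convexity bound $\operatorname{Hess}f \succeq \mu I$, applied \emph{uniformly} across all directions, turns the Hessian--velocity term $-h\,\operatorname{Hess}f\,\gamma'(s)$ into an effective $-h\mu\|\gamma'(s)\|$ contribution aligned with $\gamma'(s)$, which combines with the $\zeta$ coefficient to yield the factor $\zeta - h\mu$. The positive-curvature ($K>0$) and flat ($K=0$) cases are handled analogously, and in both of them $\lambda = \zeta = 1$ so the bound collapses to the familiar Euclidean $(1-h\mu)$.

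The main obstacle is precisely this last combination. The "$\cosh$-type" and "$\sinh$-type" coefficients naturally act on $J(s,0)$ and $(\nabla_t J)(s,0)$ respectively, and after the parallel/perpendicular decomposition relative to $\partial_t V$ these two vectors live in different subspaces; extracting the advertised product form $\lambda(\zeta-h\mu)$ rather than a less structured sum requires using the direction-independence of the lower Hessian bound together with the identity $\sinh\coth = \cosh$ linking $\lambda$ and $\zeta$. Once the pointwise estimate is established, integration over $s\in[0,1]$ and the fact that $\int_0^1\|\gamma'(s)\|\,ds = d(x_1,x_2)$ complete the proof.
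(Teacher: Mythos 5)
Your overall strategy is the same as the paper's: bound $d(\Psi(x_1),\Psi(x_2))$ by the length of a connecting curve built from a geodesic variation, recognize the transverse derivative of that variation as a Jacobi field whose initial data are $J(0)=\gamma'(s)$ and $\nabla J(0)=-h\operatorname{Hess}f(\gamma(s))\gamma'(s)$, write down the explicit Jacobi solution $J(1)=\Gamma\bigl(f_1(R_w)J(0)+f_2(R_w)\nabla J(0)\bigr)$ in constant curvature, and then control its norm through the eigenvalues of $f_1(R_w)$ and $f_2(R_w)$. Integrating the pointwise bound rather than invoking the mean value theorem for the integral, as the paper does, is only a cosmetic difference.

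The gap is precisely where you flag the difficulty but do not resolve it. The step where $-h\operatorname{Hess}f(\gamma(s))\gamma'(s)$ ``turns into an effective $-h\mu\|\gamma'(s)\|$ contribution aligned with $\gamma'(s)$'' is not a valid reduction: $\operatorname{Hess}f\,\gamma'(s)$ is generally not parallel to $\gamma'(s)$, and $\operatorname{Hess}f$ does not commute with the Jacobi operators $f_1(R_w),f_2(R_w)$, so you cannot simply substitute $\mu I$ for $\operatorname{Hess}f$ and recombine the two terms componentwise. The paper's argument instead factors
\[
f_1(R_w)-h\,f_2(R_w)\operatorname{Hess}f \;=\; f_2(R_w)\Bigl(f_2(R_w)^{-1}f_1(R_w)-h\operatorname{Hess}f\Bigr),
\]
which is legitimate because $f_1(R_w)$ and $f_2(R_w)$ are polynomial functions of the same self-adjoint operator $R_w$ and hence commute (no such commutation with $\operatorname{Hess}f$ is needed). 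One then bounds $\|f_2(R_w)\|\le\lambda$, and for the bracket applies Weyl's inequality (largest eigenvalue of a sum of self-adjoint operators is at most the sum of their largest eigenvalues): since $f_2(R_w)^{-1}f_1(R_w)$ has eigenvalues $1$ and $d\cot d\le\zeta$, and $-h\operatorname{Hess}f$ has largest eigenvalue $-h\mu$, the bracket has largest eigenvalue at most $\zeta-h\mu$. Converting ``largest eigenvalue'' to ``operator norm'' additionally requires the bracket to be positive semidefinite, i.e.\ its smallest eigenvalue $1-hL\ge 0$, which is the condition $h\le 1/L$ that the paper imposes later in the shadowing theorem. Your identity $\lambda\zeta=\cosh(\sqrt{-K}D)$ (equivalently $f_1=f_2\cdot(f_1/f_2)$) is indeed the right algebraic observation, but without the explicit factorization and the Weyl step, the product form $\lambda(\zeta-h\mu)$ does not follow from the triangle inequality or from a direct replacement $\operatorname{Hess}f\mapsto\mu I$.
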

Note that, in the positive curvature case, we recover $\xi = 1-h\mu$, in analogy with the result of~\cite{Antonioshadowing}. Finally we can state our shadowing result, which is now simple application of the contraction map shadowing Lemma.
\begin{restatable}{theorem}{shadowing}
Let $\epsilon > \frac{4 C (\lambda \zeta-1)}{\lambda^2 \mu^2}$. Any orbit $(y_k)_{k=0}^{\infty}$ of Riemannian gradient flow is $\epsilon$-shadowed by an orbit $(x_k)_{k=0}^{\infty}$ of Riemannian gradient descent, given that 
$\mu>\frac{\lambda \zeta-1}{\lambda h}$
and
\vspace{-3mm}
$$h \leq \min \left \lbrace \left(\frac{\lambda \mu}{2C }+\sqrt{\frac{\lambda^2 \mu^2}{4C^2}-\frac{\lambda \zeta-1}{C \epsilon}}\right) \epsilon, \frac{1}{L} \right \rbrace.$$
\end{restatable}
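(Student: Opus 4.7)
The plan is to assemble the three building blocks just stated in the excerpt — the Contraction Map Shadowing Lemma, the Pseudo-orbit Proposition, and the Contraction Lemma~\ref{le:contraction} — and then reduce the theorem to checking one scalar inequality in $h$. Concretely, I will let $\Psi(x) = \textnormal{exp}_x(-h\,\textnormal{gradf}(x))$ denote the RGD map, view the gradient-flow orbit $(y_k)_{k=0}^\infty$ as a candidate pseudo-orbit of $\Psi$, and then argue that the parameter regime in the theorem is exactly what is needed for the shadowing lemma to produce a nearby genuine $\Psi$-orbit.

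First I would verify that $\Psi$ is a uniform contraction. Lemma~\ref{le:contraction} says $d(\Psi(x_1),\Psi(x_2)) \leq \xi\,d(x_1,x_2)$ with $\xi = \lambda(\zeta - h\mu) = \lambda\zeta - h\lambda\mu$. The assumption $\mu > \frac{\lambda\zeta-1}{\lambda h}$ is equivalent to $\xi < 1$, and together with the basic sign requirement $\xi \geq 0$ (handled by $h \leq 1/L \leq 1/\mu$, which also allows us to invoke the pseudo-orbit bound) this places us in the hypotheses of the Contraction Map Shadowing Lemma.

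Second I would apply the pseudo-orbit proposition: since $(y_k)_{k=0}^\infty$ is the sampled gradient flow, $d(y_{k+1},\Psi(y_k)) \leq Ch^2$ for all $k$, so the gradient-flow trajectory is a $\delta$-pseudo-orbit of $\Psi$ with $\delta = Ch^2$. The shadowing lemma then guarantees $\epsilon$-shadowing provided
\begin{equation*}
Ch^2 \;\leq\; (1-\xi)\,\epsilon \;=\; \bigl(1 - \lambda\zeta + h\lambda\mu\bigr)\epsilon.
\end{equation*}
Rearranging gives a quadratic inequality in $h$:
\begin{equation*}
Ch^2 \;-\; \lambda\mu\epsilon\,h \;+\; (\lambda\zeta - 1)\epsilon \;\leq\; 0.
\end{equation*}
Solving this quadratic and selecting the admissible branch, I would then show that the set of $h$ satisfying it is precisely
\begin{equation*}
h \;\leq\; \left(\tfrac{\lambda\mu}{2C} + \sqrt{\tfrac{\lambda^2\mu^2}{4C^2} - \tfrac{\lambda\zeta - 1}{C\epsilon}}\right)\epsilon,
\end{equation*}
which is the bound in the statement. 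For the discriminant to be non-negative, one needs
\begin{equation*}
\lambda^2\mu^2\epsilon^2 \;\geq\; 4C(\lambda\zeta - 1)\epsilon, \qquad\text{i.e.}\qquad \epsilon \;\geq\; \tfrac{4C(\lambda\zeta - 1)}{\lambda^2\mu^2},
\end{equation*}
which matches exactly the lower bound on $\epsilon$ assumed in the theorem. Combining with the $h \leq 1/L$ cap needed to legitimately use Proposition~\ref{le:contraction} and the pseudo-orbit estimate, the theorem follows directly.

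The only genuinely delicate point will be the careful bookkeeping of which bound is binding in which regime — in particular, confirming that the discriminant condition on $\epsilon$ is not merely sufficient but is the right threshold obtained from the quadratic, and checking that the chosen branch of the quadratic produces a nonempty admissible interval for $h$ that is compatible with $h \leq 1/L$. Once the algebra is in place, the proof is essentially a one-line application of the Contraction Map Shadowing Lemma; the technical content lives entirely in Lemma~\ref{le:contraction} and the pseudo-orbit proposition, which are already established.
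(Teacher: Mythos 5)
Your proposal is correct and mirrors the paper's proof step by step: identify $\Psi=\textnormal{exp}_x(-h\,\textnormal{gradf}(x))$ as a uniform contraction via Lemma~\ref{le:contraction} (and note that $\mu>\tfrac{\lambda\zeta-1}{\lambda h}$ is exactly $\xi<1$), invoke the pseudo-orbit proposition to get $\delta=Ch^2$, plug both into the contraction-map shadowing lemma to reduce everything to $Ch^2\leq(1-\xi)\epsilon$, and then solve the resulting quadratic in $h$ — obtaining the same discriminant condition on $\epsilon$ and the same upper root as the theorem's bound. The delicate bookkeeping point you flag (that the quadratic actually defines an interval $[h_-,h_+]$, not merely an upper bound, and that the lower endpoint must be reconciled with the hypothesis on $\mu$) is a real subtlety; the paper glosses over it in the same way, keeping only the upper root, so your instinct to double-check it is well placed but does not represent a divergence from the paper's argument.
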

In the flat and positive-curvature case $\lambda=\zeta=1$ and we recover Theorem 3 in \cite{Antonioshadowing}.

\vspace{-1mm}
\section{Discussion}
\vspace{-2mm}
We proposed a second-order ODE which gives rise to a family of accelerated methods for weakly-quasi-convex and strongly-convex optimization. Using a modified semi-implicit integration scheme, we derived a cheap iterative Nesterov-inspired algorithm which is numerically stable and empirically achieves an accelerated rate of convergence for optimization problems defined over manifolds, under both positive and negative curvature. As future work, it would be desirable to establish a general shadowing theory for the second-order ODE we studied, in order to guarantee that the discretization error can be provably kept under control. As a first step towards such an ambitious goal, we derived a shadowing result for Riemannian gradient descent. We note that, as also noted by~\cite{Antonioshadowing}, the main difficulty in the construction of such a result for accelerated algorithms is the mysterious lack of contraction of momentum methods, which are notoriously non-descending and heavily oscillating. \newline
Finally, the continuous-time representation derived in this manuscript might  serve for other applications, such as analyzing the escape speed from saddle points~\cite{criscitiello2019efficiently, sun2019escaping} or for speeding-up the optimization of non-convex functions as in~\cite{carmon2017convex}.
\paragraph{Acknowledgements}
The authors would like to thank professors Urs Lang and Nicolas Boumal for helpful discussions regarding the content of this work. Gary B{\'e}cigneul is funded by the Max Planck ETH Center for Learning Systems.

\bibliography{main}
\bibliographystyle{plain}



\newpage
\onecolumn
\appendix
\part*{Appendix}

\section{Derivative of Riemannian squared distance}
\begin{lemma}
\label{le:derivative of intrinsic distance}
Let $M$ be a Riemannian manifold and $X:I \rightarrow M$ a smooth curve and $p \in M$. Then 
\begin{equation*}
    \frac{d}{dt} d(X,p)^2= \frac{d}{dt} \| \textnormal{log}_X(p) \|^2=2 \langle \textnormal{log}_X(p), -\dot X \rangle. 
\end{equation*}
\end{lemma}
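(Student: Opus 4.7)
The plan is to compute the derivative via the chain rule, once we identify the Riemannian gradient of the squared distance function. Under the standing assumption that the exponential map at $X(t)$ is a diffeomorphism onto the working domain, $\log_{X(t)}(p)$ is well-defined and $d(X(t),p) = \|\log_{X(t)}(p)\|$, so the two expressions whose equality is claimed on the left are identical by definition; the real content is the last equality.

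First I would show that the Riemannian gradient of the function $g(x) := \tfrac{1}{2} d(x,p)^2$ is $-\log_x(p)$. This is the standard consequence of the Gauss lemma (equivalently, of the first variation formula for arc length). Concretely: fix $x$ and let $\gamma(s)$ be the unique minimizing geodesic from $x$ to $p$, so that $\log_x(p) = \dot\gamma(0)$ and $d(x,p) = \|\dot\gamma(0)\|$. For any $u \in T_x M$, choose a smooth curve $c:(-\varepsilon,\varepsilon)\to M$ with $c(0)=x$ and $\dot c(0)=u$, and consider the family of minimizing geodesics from $c(s)$ to $p$. The first variation formula then yields
\begin{equation*}
\left.\frac{d}{ds}\right|_{s=0} d(c(s),p) \;=\; -\left\langle u,\, \frac{\log_x(p)}{\|\log_x(p)\|}\right\rangle,
\end{equation*}
and multiplying by $d(x,p) = \|\log_x(p)\|$ gives $dg(x)[u] = -\langle u,\log_x(p)\rangle$. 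By the defining property of the Riemannian gradient this means $\operatorname{grad} g(x) = -\log_x(p)$.

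With this identification in hand, the chain rule for real-valued functions along a smooth curve in $M$ gives
\begin{equation*}
\frac{d}{dt}\, d(X(t),p)^2 \;=\; 2\,\frac{d}{dt}\, g(X(t)) \;=\; 2\,\langle \operatorname{grad} g(X(t)), \dot X(t)\rangle \;=\; 2\,\langle -\log_{X(t)}(p),\dot X(t)\rangle,
\end{equation*}
which is exactly $2\langle \log_X(p), -\dot X\rangle$, as claimed.

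The main obstacle is the first step: proving that $\operatorname{grad}\bigl(\tfrac{1}{2} d(\cdot,p)^2\bigr) = -\log_x(p)$. Away from the cut locus of $p$ this is classical via the Gauss lemma, and our assumptions (Assumption~3 in Section~\ref{sec:accelerated_ode}, plus the injectivity radius discussion in the Background section) ensure we are strictly inside that regime along the entire curve $X$, so no subtlety from non-smoothness of the distance function arises. The rest is routine chain rule.
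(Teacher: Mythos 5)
Your argument is correct. The real content, as you note, is identifying $\operatorname{grad}\bigl(\tfrac12 d(\cdot,p)^2\bigr)=-\log_x(p)$, and the first-variation-of-arc-length route you take to establish it is a valid and standard one.

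However, it is a different route from the paper's. The paper never invokes the first variation formula or speaks of a gradient. Instead, it (i) proves directly that $d(\exp_a)\bigl(\log_a(b)\bigr)\bigl(\log_a(b)\bigr)=-\log_b(a)$ by comparing the two endpoint parametrizations of the connecting geodesic, (ii) differentiates the identity $\exp_p\bigl(\log_p(X(t))\bigr)=X(t)$ in $t$ to express $\dot X$ in terms of $\tfrac{d}{dt}\log_p(X)$, and (iii) rewrites $\tfrac{d}{dt}\|\log_p(X)\|^2$ using the Gauss lemma's radial-isometry property of $d(\exp_p)$ to pass the differential of the exponential map inside the inner product, then substitutes the two preliminary facts. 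In effect the paper works with $\log_p(X)$ (base point $p$, the fixed point) rather than $\log_X(p)$, and the duality fact of step (i) is what converts one into the other. Your approach is arguably more conceptual and shorter once one accepts the gradient formula for the squared distance; the paper's approach is more self-contained, deriving everything from the exp/log identity and Gauss lemma without appealing to the (logically equivalent but separately-proved) first variation formula. Both are sound; yours buys brevity at the cost of importing a slightly heavier classical tool.
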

\begin{proof}
We prove firstly that if $a,b \in M$, then 
\begin{equation*}
    d(\textnormal{exp}_a)(\textnormal{log}_a(b))(\textnormal{log}_a(b))=-\textnormal{log}_b(a).
\end{equation*}
For this purpose, we consider two different parametrizations of the geodesic connecting $a$ and $b$, one starting from $a$, $\alpha(t)= \textnormal{exp}_a(t \textnormal{log}_a(b))$, and one starting from $b$, $\beta(t)= \textnormal{exp}_b(t \textnormal{log}_b(a))$. Obviously $\alpha(t)=\beta(1-t)$. Differentiating the last equation we get $d(\textnormal{exp}_a)(t\textnormal{log}_a(b))(\textnormal{log}_a(b))=-d(\textnormal{exp}_b)((1-t)\textnormal{log}_b(a))(\textnormal{log}_b(a))$. Evaluating this at $t=1$ and using that the differential of the exponential map at $0$ is the identity, the result follows. Using this result and Gauss lemma we can prove the desired result. 
\newline
Consider a curve $X:I \rightarrow M$, a point $p \in M$ and the identity
\begin{equation*}
    \textnormal{exp}_{p} (\textnormal{log}_{p}(X))=X.
\end{equation*}
Differentiating it, we get
\begin{equation*}
    d(\textnormal{exp}_{p}) (\textnormal{log}_{p}(X)) (\frac{d}{dt} \textnormal{log}_{p}(X))=\dot X.
\end{equation*}
Now we have
\begin{align*}
    \frac{d}{dt} d(X,p)^2= \frac{d}{dt} \| \textnormal{log}_{p}(X) \|^2= 2 \langle \textnormal{log}_{p}(X), \frac{d}{dt} \textnormal{log}_{p}(X) \rangle= \\ 2 \langle d(\textnormal{exp}_{p}) (\textnormal{log}_{p}(X))\textnormal{log}_{p}(X),d(\textnormal{exp}_{p}) (\textnormal{log}_{p}(X))\frac{d}{dt} \textnormal{log}_{p}(X) \rangle =
    \langle -\textnormal{log}_X(p), \dot X \rangle.
\end{align*}
The third equality follows from the fact that $d(\textnormal{exp}_{p}) (\textnormal{log}_{p}(X))$ is a radial isometry, by Gauss lemma, and the fourth by our preliminary result.
\end{proof}

\subsection{Gradient flow}
Munier proved in \cite{munier2007steepest} (Theorem 1) that the differential equation
\begin{equation*}
    \dot X=-\textnormal{gradf}(X), X(0)=x_0
\end{equation*}
has a global solution $X:[0, \infty) \rightarrow M$, given that the manifold $M$ is complete.

\subsubsection{The convex case}
\begin{theorem}
\label{th: conv grad flow}
The solution $X:[0, \infty) \rightarrow R$ of the gradient flow ODE satisfies the inequality
\begin{equation*}
    f(X)-f(x^*)\leq \frac{\| \textnormal{log}_{x_0}(x^*) \|^2}{2t},
\end{equation*}
for $t>0$.
\end{theorem}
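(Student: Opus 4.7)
The plan is to mirror the classical Su--Boyd--Candès Lyapunov argument for Euclidean gradient flow, replacing the displacement $x^\ast-X$ by the Riemannian logarithm $\log_X(x^\ast)$ and using the chain rule for the squared distance (Lemma~\ref{le:derivative of intrinsic distance}) in place of the Euclidean chain rule.

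Concretely, I would define the Lyapunov candidate
\begin{equation*}
    \epsilon(t) \;=\; t\bigl(f(X(t))-f(x^\ast)\bigr) \;+\; \tfrac{1}{2}\,\bigl\|\log_{X(t)}(x^\ast)\bigr\|^2,
\end{equation*}
and show that $\epsilon$ is non-increasing along any solution of $\dot X=-\textnormal{gradf}(X)$. Differentiating the first summand gives $(f(X)-f^\ast)+t\langle \textnormal{gradf}(X),\dot X\rangle = (f(X)-f^\ast)-t\|\textnormal{gradf}(X)\|^2$. For the second summand I would apply Lemma~\ref{le:derivative of intrinsic distance} with $p=x^\ast$, which yields
\begin{equation*}
    \tfrac{d}{dt}\tfrac{1}{2}\bigl\|\log_X(x^\ast)\bigr\|^2 \;=\; \langle \log_X(x^\ast),-\dot X\rangle \;=\; \langle \log_X(x^\ast),\textnormal{gradf}(X)\rangle.
\end{equation*}

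Next I would invoke geodesic convexity of $f$ in the form $f(x^\ast)-f(X)\ge \langle \textnormal{gradf}(X),\log_X(x^\ast)\rangle$ to cancel the cross term against $f(X)-f^\ast$, obtaining
\begin{equation*}
    \dot\epsilon(t) \;\le\; (f(X)-f^\ast) - t\|\textnormal{gradf}(X)\|^2 - (f(X)-f^\ast) \;=\; -t\|\textnormal{gradf}(X)\|^2 \;\le\; 0.
\end{equation*}
Since $\dot X(0)$ is irrelevant (first-order ODE) and $\epsilon(0)=\tfrac{1}{2}\|\log_{x_0}(x^\ast)\|^2$, monotonicity gives $t(f(X)-f^\ast)\le \epsilon(t)\le \epsilon(0)$ for all $t>0$, which rearranges to the claimed $O(1/t)$ bound.

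There is no real obstacle here: existence of a global solution is already granted by Munier's theorem cited just before the statement, and the only non-trivial manipulation is the derivative of the squared distance, which is handled precisely by Lemma~\ref{le:derivative of intrinsic distance}. The mild subtlety is sign-bookkeeping for $\log_X(x^\ast)$ vs.\ $\log_{x^\ast}(X)$ inside Gauss' lemma, but this is exactly what the cited lemma already resolves, so the proof reduces to a short, direct computation.
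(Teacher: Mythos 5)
Your proof is correct and is essentially the same as the paper's: the paper uses the identical Lyapunov function $\epsilon(t)=t(f(X)-f(x^*))+\tfrac12\|\log_X(x^*)\|^2$, differentiates it via Lemma~\ref{le:derivative of intrinsic distance}, and bounds $\dot\epsilon\le 0$ by geodesic convexity in exactly the way you describe.
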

\begin{proof}
Consider the Lyapunov function 
\begin{equation*}
    \epsilon(t)=t(f(X)-f(x^*))+\frac{1}{2} \| \textnormal{log}_X(x^*) \|^2.
\end{equation*}
We have that
\begin{equation*}
\begin{split}
\dot \epsilon(t) &= f(X)-f(x^*)+t\langle \textnormal{gradf}(X), \dot X \rangle+\langle \textnormal{log}_X(x^*), -\dot X \rangle \\ &=  f(X)-f(x^*)+t \langle \textnormal{gradf}(X), -\textnormal{gradf}(X) \rangle+\langle \textnormal{log}_X(x^*), \textnormal{gradf}(X) \rangle \\ &= (f(X)-f(x^*)+\langle- \textnormal{log}_X(x^*), \textnormal{gradf}(X) \rangle)-t \| \textnormal{gradf}(X) \|^2 \leq 0,
\end{split}
\end{equation*}
where the first equality holds due to Lemma \ref{le:derivative of intrinsic distance} and the last inequality due to geodesic convexity.
Thus, 
\begin{equation*}
    t(f(X)-f(x^*))\leq \epsilon(t) \leq \epsilon(0)=\frac{1}{2} \| \textnormal{log}_{x_0}(x^*) \| ^2.
\end{equation*}
and the result follows.
\end{proof}

\subsubsection{The weakly-quasi-convex-case}

\begin{theorem}
If a function $f$ is geodesically $\alpha$-weakly-quasi-convex, then the global gradient flow trajectory $X:[0, \infty) \rightarrow M$ satisfies
\begin{equation*}
    f(X)-f^* \leq \frac{\| \textnormal{log}_{x_0}(x^*) \|^2}{2 \alpha t},
\end{equation*}
for $t>0$.
\end{theorem}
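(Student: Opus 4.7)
The plan is to mimic Theorem~\ref{th: conv grad flow} from the convex case, with the Lyapunov function rescaled by the weak-quasi-convexity parameter $\alpha$. Concretely, I would consider
\begin{equation*}
    \epsilon(t) = \alpha t\bigl(f(X)-f^*\bigr) + \tfrac{1}{2}\| \textnormal{log}_X(x^*) \|^2,
\end{equation*}
and show that $\dot\epsilon(t) \le 0$. Once this is established, the bound follows immediately: since $\epsilon$ is non-increasing,
$\alpha t (f(X)-f^*) \le \epsilon(t) \le \epsilon(0) = \tfrac{1}{2}\|\textnormal{log}_{x_0}(x^*)\|^2$,
which is precisely the claimed rate $f(X)-f^* \le \|\textnormal{log}_{x_0}(x^*)\|^2/(2\alpha t)$.

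To verify monotonicity, I would differentiate term by term. The first term gives $\alpha(f(X)-f^*) + \alpha t\,\langle \textnormal{gradf}(X), \dot X\rangle$ by the chain rule. For the second term, I invoke Lemma~\ref{le:derivative of intrinsic distance}, which yields $\tfrac{d}{dt}\tfrac{1}{2}\|\textnormal{log}_X(x^*)\|^2 = \langle \textnormal{log}_X(x^*), -\dot X\rangle$. Substituting the gradient-flow equation $\dot X = -\textnormal{gradf}(X)$ into both expressions gives
\begin{equation*}
    \dot\epsilon(t) = \alpha\bigl(f(X)-f^*\bigr) + \langle \textnormal{log}_X(x^*), \textnormal{gradf}(X)\rangle - \alpha t\,\|\textnormal{gradf}(X)\|^2.
\end{equation*}

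The first two summands sum to something non-positive by the definition of geodesic $\alpha$-weak-quasi-convexity applied with $c=x^*$, namely $\alpha(f(X)-f^*) \le -\langle \textnormal{gradf}(X), \textnormal{log}_X(x^*)\rangle$. The third summand is non-positive by inspection. Hence $\dot\epsilon(t) \le 0$ and the result follows.

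Honestly, this is a routine variation on the convex case, so there is no real obstacle. The only small bookkeeping point is to make sure the $\alpha$ is placed in the time-dependent term of the Lyapunov function rather than in the squared-distance term, so that the weak-quasi-convexity inequality cancels cleanly against the contribution from differentiating $\tfrac{1}{2}\|\textnormal{log}_X(x^*)\|^2$. No bound on curvature is needed here because the formula $\tfrac{d}{dt}\tfrac{1}{2}\|\textnormal{log}_X(x^*)\|^2 = -\langle \textnormal{log}_X(x^*), \dot X\rangle$ is a first-order identity that does not invoke the curvature-dependent Hessian estimates of Section~\ref{sec:Hessian_distance_function} (those will only enter in the accelerated second-order analysis).
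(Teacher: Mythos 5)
Your proposal uses exactly the Lyapunov function $\epsilon(t)=\alpha t(f(X)-f^*)+\tfrac{1}{2}\|\textnormal{log}_X(x^*)\|^2$ that the paper invokes, differentiates via Lemma~\ref{le:derivative of intrinsic distance}, and closes with the weak-quasi-convexity inequality just as the paper does. The argument is correct and essentially identical to the paper's.
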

\begin{proof}
Consider the Lyapunov function
\begin{equation*}
  \epsilon(t)=\alpha t (f(X)-f^*)+\frac{1}{2} d(X,x^*)^2. 
\end{equation*}
which is inspired by the Lyapunov function in \cite{orvieto2018continuous}
 (end of page 22). Differentiating, using Lemma \ref{le:derivative of intrinsic distance} and $\alpha$-weakly-quasi-convexity,  we get the result.
\end{proof}

\subsubsection{The strongly convex case}

\begin{theorem}
If a function $f$ is $\mu$-strongly convex, then the gradient flow trajectory minimizes it with rate
\begin{equation*}
    f(X)-f^* \leq e^{-2 \mu t} (f(x_0)-f^*),
\end{equation*}
for $t>0$.
\end{theorem}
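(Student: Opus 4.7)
The plan is a direct Lyapunov argument that mirrors the Euclidean proof via a Polyak–Łojasiewicz-type inequality. Global existence of $X:[0,\infty)\to M$ has already been established (Munier, cited above), and by Assumption~6 the trajectory remains in the geodesically convex domain $A$ where $\mu$-strong convexity applies at any pair of points, including $(X(t),x^*)$.

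First, I would introduce the Lyapunov function $\epsilon(t):=f(X(t))-f^*$ and differentiate along the flow. The chain rule combined with the defining property of the Riemannian gradient gives
\begin{equation*}
\dot\epsilon(t)=\langle\textnormal{gradf}(X),\dot X\rangle=-\|\textnormal{gradf}(X)\|^2,
\end{equation*}
where the second equality uses the ODE $\dot X=-\textnormal{gradf}(X)$.

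Second, I would derive a Riemannian PL inequality from geodesic $\mu$-strong convexity. Applying the defining inequality to $x=x^*$ and $y=X$ yields
\begin{equation*}
f^*-f(X)\ \geq\ \langle\textnormal{gradf}(X),\log_X(x^*)\rangle+\tfrac{\mu}{2}\|\log_X(x^*)\|^2.
\end{equation*}
Young's inequality $-\langle a,b\rangle\leq\tfrac{1}{2\mu}\|a\|^2+\tfrac{\mu}{2}\|b\|^2$ applied to $a=\textnormal{gradf}(X)$ and $b=\log_X(x^*)$ cancels the quadratic distance term and leaves
\begin{equation*}
f(X)-f^*\ \leq\ \tfrac{1}{2\mu}\|\textnormal{gradf}(X)\|^2,
\end{equation*}
i.e.\ $\|\textnormal{gradf}(X)\|^2\geq 2\mu\,\epsilon(t)$. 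Combining with the computation of $\dot\epsilon$ gives the differential inequality $\dot\epsilon\leq-2\mu\,\epsilon$, and a one-line Grönwall argument (equivalently, differentiating $e^{2\mu t}\epsilon(t)$ and noting its derivative is nonpositive) yields $\epsilon(t)\leq e^{-2\mu t}\epsilon(0)$, which is the claimed bound.

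I do not expect a genuine obstacle here: the Riemannian setting only intervenes in replacing $y-x$ by $\log_X(y)$ and the Euclidean dot product by the Riemannian inner product, both of which are absorbed into the definition of geodesic strong convexity. The only point requiring a small justification is that the strong-convexity inequality is legitimately applied at $(X(t),x^*)$ for every $t\geq0$, which is immediate from Assumption~6 (the trajectory stays inside $A$) together with the existence and uniqueness of $\log_X(x^*)$ on a geodesically uniquely convex set.
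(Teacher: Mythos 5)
Your proposal is correct and follows essentially the same route as the paper: differentiate $f(X)-f^*$ along the flow to get $-\|\textnormal{gradf}(X)\|^2$, invoke the Polyak--{\L}ojasiewicz inequality (which the paper cites as a known consequence of strong convexity and you derive explicitly via Young's inequality), and close with Gr\"onwall. The only difference is that you spell out the PL derivation, which the paper treats as standard.
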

\begin{proof}
We just differentiate the quantity $f(X)-f^*$:
\begin{equation*}
\frac{d}{dt} (f(X)-f^*)   = -\| \textnormal{gradf}(X) \|^2 \leq -2 \mu (f(X)-f^*).  
\end{equation*}
where the inequality is an important property of strong convexity, called Polyak-Lojasiewicz condition.
Now we use Gronwall's lemma and the result follows.
\end{proof}

\section{Proofs for $\boldsymbol{\nabla \textnormal{\textbf{log}}}$ and trigonometric distance bound}
\covariant*
\begin{proof}
We have that $\textnormal{\textnormal{log}}_X(p)= \textnormal{\textnormal{gradf}}$, where $f=-\frac{1}{2}d(X,p)^2$. Indeed choose $\gamma$ smooth curve passing from $X$ in the direction of a tangent vector $a\in T_X M$:
\begin{equation*}
    d (d^2(p,X))a=\frac{d}{dt} d^2(p,\gamma)_{|t=0}=\langle 2\textnormal{\textnormal{log}}_{\gamma(0)}(p),-\dot \gamma (0) \rangle=\langle -2\textnormal{\textnormal{log}}_X(p), a \rangle.
\end{equation*}
The second equality follows from Lemma \ref{le:derivative of intrinsic distance}.
Thus we are interested in $\nabla _{\dot X} \textnormal{\textnormal{log}} _X(p)= \nabla (\textnormal{\textnormal{gradf}})$. It is convenient to view $A_f=\nabla \textnormal{\textnormal{gradf}}$ as an endomorphism which acts on vector fields. Namely $A_f(B)=\nabla_B \textnormal{\textnormal{gradf}}$ and we care for $A_f(\dot X)$. We have that 
\begin{align*}
A_f  = \nabla\operatorname{grad}(-\tfrac12 r^2)
 = 
\nabla(-r \operatorname{grad}r) =
-\operatorname{grad} r \otimes dr - r \nabla \operatorname{grad} r
= -\operatorname{grad} r \otimes dr - r A_r.
\end{align*}
where $r=d(p,;)$ and $\otimes$ is the tensor product between two vector fields.
This formulation leads us to split the vector field $\dot X$ in one part parallel to $\textnormal{gradr}$ and one orthogonal (name it $Y$). Thus
\begin{equation*}
    \dot X= m \textnormal{gradr}+Y
\end{equation*}
and we have that $(\operatorname{grad} r \otimes dr)(\textnormal{gradr})=\textnormal{gradr}$, $A_r(\textnormal{gradr})=0$ (because the integral curves of $\textnormal{gradr}$ are geodesics, so $\nabla_{\textnormal{gradr}} \textnormal{gradr}=0$), $(\operatorname{grad} r \otimes dr)(Y)=0$, thus we have to evaluate the action of $A_r$ to $Y$. We know that in the case where the sectional curvature is constant and equal to $K$, we have that
\newline
$A_r(Y)=g_r(K) \| Y \| ^2$, where
\begin{equation*}
    g_r(K)= 
\begin{cases}
1/r, & K = 0,\\
(1/R) \cot(r/R), & K= 1/R^2 >0,\\
(1/R) \coth(r/R), & K= -1/R^2<0.
\end{cases}
\end{equation*}
Applying some comparison theory we can show that $\langle A_r(Y),Y\rangle \ge g_r(K_{\max})|Y|^2$ and $\langle A_r(Y),Y\rangle \leq g_r(K_{\min})|Y|^2$, for $K_{\min}\leq K \leq K_{\max}$ (check \cite{Petersen_textbook}, Proposition 25 in page 173 for Riccati comparison theory, and \cite{Lee_textbook}, chapter 11).
Now we have that
\begin{multline*}
\langle A_f( \dot X),-\dot X \rangle= \langle -m\textnormal{gradr}-rA_r(Y), -m \textnormal{gradr}-Y \rangle\\= \| m \textnormal{gradr} \|^2+m \langle \textnormal{gradr}, Y \rangle+ mr \langle \textnormal{gradr}, A_r(Y) \rangle+r \langle A_r(Y),Y \rangle. 
\end{multline*}
We have that $\langle \textnormal{gradr},Y \rangle=0$, because $Y$ and $\textnormal{gradr}$ have been assumed to be orthogonal. Also, by the fundamental theorem of Riemannian geometry, the Levi-Civita connection satisfies
\begin{align*}
    \frac{d}{dY} \langle \textnormal{gradr}, \textnormal{gradr} \rangle= \langle \nabla_Y \textnormal{gradr}, \textnormal{gradr} \rangle+\langle  \textnormal{gradr}, \nabla_Y\textnormal{gradr} \rangle=2\langle \nabla_Y \textnormal{gradr}, \textnormal{gradr} \rangle,
\end{align*}
where $\frac{d}{dY}$ is the derivative in the direction of the vector field $Y$.
Now using that $\textnormal{gradr}=grad(r^2)^{\frac{1}{2}}$, we can prove that $\textnormal{gradr}=-\frac{\textnormal{\textnormal{log}}_X(p)}{d(X,p)}$, thus $\| \textnormal{gradr} \| ^2=1$, which means that $\langle A_r(Y), \textnormal{gradr} \rangle=\langle \nabla_Y \textnormal{gradr} , \textnormal{gradr} \rangle = \frac{1}{2}   \frac{d}{dY} \| \textnormal{gradr} \|^2$=0. Thus 
\begin{equation*}
\langle A_f( \dot X),-\dot X \rangle=\| m \textnormal{gradr} \|^2+r \langle A_r(Y),Y \rangle=m^2+r \langle A_r(Y),Y \rangle
\end{equation*}
and
\begin{align*}
    \| \dot X \|^2= \langle m\textnormal{gradr}+Y, m\textnormal{gradr}+Y \rangle= \| m \textnormal{gradr} \| ^2+2 \langle m\textnormal{gradr}, Y \rangle + \| Y \|^2= m^2+\| Y \|^2.
\end{align*}
Using the previous comparison results we get
\begin{equation*}
 r g_r(K_{\max}) \| Y \|^2 \leq r\langle A_r(Y), Y \rangle \leq r g_r(K_{\min}) \| Y \|^2;    
\end{equation*} 
and equivalently
\begin{equation*}
 r g_r(K_{\max}) (\| \dot X \|^2-m^2) \leq r\langle A_r(Y), Y \rangle \leq r g_r(K_{\min}) (\| \dot X \|^2-m^2)    
\end{equation*}
and
\begin{equation*}
m^2+r g_r(K_{\max}) (\| \dot X \|^2-m^2) \leq \langle A_f(\dot X), \dot X \rangle \leq m^2+r g_r(K_{\min}) (\| \dot X \|^2-m^2).    
\end{equation*}
Thus,
\begin{equation*}
(1-r g_r(K_{\max}))m^2+r g_r(K_{\max}) \| \dot X \|^2 \leq \langle A_f(\dot X), \dot X \rangle \leq (1-r g_r(K_{\min}))m^2+r g_r(K_{\min}) \| \dot X \|^2.    
\end{equation*}
Now we have to evaluate $m$. It arises when projecting $\dot X$ to $\textnormal{gradr}$, so we can compute it by basic linear algebra. Namely
\begin{align*}
    m=\frac{\langle \dot X, \textnormal{gradr} \rangle}{\| \textnormal{gradr} \|}=\langle \dot X, \textnormal{gradr} \rangle=\langle \dot X, -\frac{\textnormal{\textnormal{log}}_X(p)}{\| \textnormal{\textnormal{log}}_X(p) \|} \rangle=\frac{1}{\| \textnormal{\textnormal{log}}_X(p) \|} \langle \textnormal{\textnormal{log}}_X(p), -\dot X \rangle
\end{align*}
and   
\begin{equation*}
   0\leq m^2=\frac{1}{\| \textnormal{\textnormal{log}}_X(p) \|^2} \langle \textnormal{\textnormal{log}}_X(p), -\dot X \rangle^2 \leq \| \dot X \|^2
\end{equation*}
by Cauchy-Schwarz inequality. 

If $K_{\max}>0$, then $rg_r(K_{\max})<1$, so 
$$(1-rg_r(K_{\max}))m^2 \geq 0 \text{ \ \ and \ \ }(1-r g_r(K_{\max}))m^2+r g_r(K_{\max}) \| \dot X \|^2 \geq
r g_r(K_{\max}) \| \dot X \|^2.$$
If $K_{\max}\leq 0$, then $rg_r(K_{\max}) \geq 1$, so 
$$(1-r g_r(K_{\max}))m^2+r g_r(K_{\max}) \| \dot X \|^2\geq (1-r g_r(K_{\max}))\| \dot X \|^2+r g_r(K_{\max}) \| \dot X \|^2=\| \dot X \|^2.$$ 
Thus we have overall that
\begin{equation*}
    \langle \nabla_{\dot X} \textnormal{\textnormal{log}} _X(p) , -\dot X \rangle \geq \delta \| \dot X \|^2,
\end{equation*}
because the function $x \cot(x)$ is decreasing for $x\geq 0$ and $r\leq D$.

Now we proceed to the other direction. 

If $K_{\min}>0$, then $rg_r(K_{\min})<1$, so 
$$(1-rg_r(K_{\min}))m^2 \leq (1-rg_r(K_{\min}))\| \dot X \|^2 \text{ \ \ and \ \ }(1-r g_r(K_{\max}))m^2+r g_r(K_{\max}) \| \dot X \|^2 \leq  \| \dot X \|^2.$$

If $K_{\min}\leq 0$, then $rg_r(K_{\min}) \geq 1$, thus $$(1-r g_r(K_{\min}))m^2+r g_r(K_{\min}) \| \dot X \|^2\leq r g_r(K_{\min}) \| \dot X \|^2.$$
Thus we have overall that
\begin{equation*}
    \langle \nabla_{\dot X} \textnormal{\textnormal{log}} _X(p) , -\dot X \rangle \leq \zeta \| \dot X \|^2,
\end{equation*}
because $r(t)=d(X(t),p)$. 
Combining these inequalities, the result follows.
\end{proof}
Of course the inequalities of Lemma \ref{le:result for covariant derivative} hold independently if we bound the curvature only in one direction.
\cosinelaw*
\begin{proof}
    Let $X$ be the side of $\Updelta abc$ connecting $b=X(0)$ and $c=X(1)$. Consider the function $w: \R_+ \rightarrow \R$, given by 
\begin{equation*}
    w(t)=\| \textnormal{\textnormal{log}}_{X(t)}(a) \|_{T_{X(t)} M}^2.
\end{equation*}
By Taylor's theorem we have that
\begin{align*}
\| \textnormal{\textnormal{log}}_c(a) \|^2- \| \textnormal{\textnormal{log}}_b(a) \|^2=\| \textnormal{\textnormal{log}}_{X(1)}(a) \|^2- \| \textnormal{\textnormal{log}}_{X(0)}(a) \|^2=\dot w(0)+\frac{1}{2} \ddot w(\xi),
\end{align*}
for some $\xi \in (0,1)$.
We have by Lemma \ref{le:derivative of intrinsic distance} that $\dot w=2 \langle \textnormal{\textnormal{log}}_X(a), -\dot X \rangle$, so 
\newline
$\ddot w=2 \langle \nabla \textnormal{\textnormal{log}}_X(a), -\dot X \rangle+ 2\langle \textnormal{\textnormal{log}}_X(a),-\nabla \dot X \rangle=2 \langle \nabla \textnormal{\textnormal{log}}_X(a), -\dot X \rangle$, because $X$ is a geodesic, which implies that $\nabla \dot X=0$.
Thus,
\begin{multline*}
\| \textnormal{\textnormal{log}}_b(a) \|^2- \| \textnormal{\textnormal{log}}_c(a) \|^2= 2 \langle \textnormal{\textnormal{log}}_{X(0)}(a),-\textnormal{\textnormal{log}}_b(c) \rangle+\langle \nabla \textnormal{\textnormal{log}}_{X(\xi)}(a),-\dot X(\xi) \rangle \\ \leq 2 \langle \textnormal{\textnormal{log}}_b(a),-\textnormal{\textnormal{log}}_b(c) \rangle+\langle \nabla \textnormal{\textnormal{log}}_{X(\xi)}(a),-\dot X(\xi) \rangle.   
\end{multline*}
By Lemma \ref{le:result for covariant derivative}, we know that 
\begin{equation*}
\langle \nabla \textnormal{\textnormal{log}}_{X(\xi)}(a),-\dot X(\xi) \rangle \geq \delta(\xi) \| \dot X(\xi) \|^2.
\end{equation*}
Using again that $X$ is a geodesic, we have
\begin{equation*}
    \frac{d}{dt} \| \dot X \|^2=2 \langle \dot X, \nabla \dot X \rangle=0
\end{equation*}
which means that $\| \dot X \|_{T_X M}^2$ is constant, thus $\| \dot X(\xi) \|_{T_{X(\xi)} M}^2=\| \dot X(0) \|_{T_{X(0)} M}^2
=\| \textnormal{\textnormal{log}}_b(c) \|^2$.
\newline
Thus
\begin{align*}
      \| \textnormal{\textnormal{log}}_c(a) \|^2- \| \textnormal{\textnormal{log}}_b(a) \|^2 \geq 2 \langle \textnormal{\textnormal{log}}_b(a),-\textnormal{\textnormal{log}}_b(c) \rangle+ \delta(\xi) \| \textnormal{\textnormal{log}}_b(c) \|^2 
\end{align*}
and equivalently
\begin{align*}
      \| \textnormal{\textnormal{log}}_c(a) \|^2  \geq 2 \langle \textnormal{\textnormal{log}}_b(a),-\textnormal{\textnormal{log}}_b(c) \rangle+ \delta(\xi) \| \textnormal{\textnormal{log}}_b(c) \|^2+\| \textnormal{\textnormal{log}}_b(a) \|^2. 
\end{align*}
Thus, the result follows for $q=X(\xi) \in bc$.
\end{proof}
According to the proofs of the last results, in the case that our manifold is a sphere, the inequality is tight. Namely, it holds as an equality if the geodesic $X=(bc)$ satisfies
\begin{equation*}
    \textnormal{log}_X(a) \bot \dot X.
\end{equation*}
We can always choose a geodesic triangle with this property in the sphere, thus our inequality is tight in the spherical case.


\section{Proof of existence of a solution}
\existence*
\begin{proof}
The proof will be similar to the relevant result in \cite{su2014differential}(Appendix A). We start by modifying the equation in order to be defined at $0$. So, we get a family of equations of the form $\nabla \dot X+\frac{v}{\max(\delta,t)} \dot X + \textnormal{\textnormal{gradf}}(X)=0$, where $\delta$ is a positive real number and $X,\dot X$ continue to satisfy the same initial conditions. Since we have assumed that $\textnormal{\textnormal{exp}}$ and $\textnormal{\textnormal{log}}$ are defined globally on $M$, we can choose geodesically normal coordinates $\phi=\psi^{-1}$ around $x_0$ defined globally on $M$ and put $c=\phi \circ X$. The equation in geodesically normal coordinates is 

\begin{equation*}
\ddot c^k+\sum_{i,j=1}^{m} \Gamma_{ij}^k(c) \dot c^i \dot c^j+ \frac{v}{\max(\delta,t)} \dot c^k + \sum_{i=1}^{m} g^{ik} \frac{\partial (f o \psi)}{\partial x^i}(c)=0,
\end{equation*}
for $k=1,...,m$, where c(0)=$\phi(x_0)=0$ and $\dot c(0)=d\phi(x_0)\dot X(0)=0$. Since $f$ is of class $C^2$, we have that $\sum_{i=1}^{m} g^{ik} \frac{\partial (f o \psi)}{\partial x^i}(c)$ is smooth, thus also locally Lipschitz. Substituting $u=\dot c$ we get a system of first order ODEs, which defines a local representation for a vector field in the tangent bundle of $M$. The solution of such an ODE in local coordinates corresponds to an integral curve of this vector field in $TM$. Since an integral curve exists always locally ($TM$ is itself a manifold) and it is unique up to an initial condition, we conclude that our initial smoothed ODE $\nabla \dot X+\frac{v}{\max(\delta,t)} \dot X + \textnormal{\textnormal{gradf}}(X)=0$ has a unique solution locally around $0$. For more details in the correspondence of second order ODEs on a manifold $M$ with integral curves on $TM$ see \cite{Lang_textbook} (pages 96-99). Let $[0,T), T>0$ be the maximal existence interval of the solution $X_{\delta}$. We prove that this solution can actually be extended until infinity following an argument in \cite{munier2007steepest} (Theorem 1). Assume that $T< \infty$. We differentiate the function $f(X_{\delta})$: 
\begin{multline*}
\frac{d}{dt}(f(X_{\delta}(t)))  = \langle \textnormal{\textnormal{gradf}}(X_{\delta}), \dot X_{\delta} \rangle  =
\langle -\nabla \dot X_{\delta}-\frac{v}{\max(\delta,t)} \dot X_{\delta}, \dot X_{\delta} \rangle \\ =-\langle \nabla \dot X_{\delta}, \dot X_{\delta} \rangle-\frac{v}{\max(\delta,t)}\langle\dot X_{\delta}, \dot X_{\delta} \rangle =
-\frac{1}{2} \frac{d}{dt} \| \dot X_{\delta} \|^2-\frac{v}{\max(\delta,t)} \| \dot X_{\delta} \|^2.
\end{multline*}
Integrating each side and using Cauchy-Schwarz inequality for integrals, we get 
\begin{equation*}
\int_0^T \sqrt{\frac{v}{\max(\delta,t)}} \| \dot X_{\delta} \| dt  \leq \sqrt{T (f(x_0)-\inf_Mf+\frac{1}{2} (\| \dot X_{\delta}(0)\|^2 -  \inf_{[0,T)} \| \dot X_{\delta}(t) \|^2))}<\infty.
\end{equation*}
This is because $f$ has been assumed to be geodesically convex, thus bounded from below.
\newline
But we can split the integral in the left
 hand side as 
$\int_0^{\delta} \sqrt{\frac{v}{\delta}} \| \dot X_{\delta} \| dt + \int_{\delta}^T \sqrt{\frac{v}{t}} \| \dot X_{\delta} \| dt$.
If $0<\delta<T$, the first integral in the sum is finite, so the second is also finite. If $\delta \geq T$ we can proceed directly without splitting and get that 
$\int_0^T \sqrt{\frac{v}{\delta}} \| \dot X_{\delta} \| dt$ is finite. Thus, we have that  $\sqrt{\frac{v}{t_0}} \dot X_{\delta}:[\delta, T) \rightarrow M$ (for some $t_0\in (\delta,T)$ by the mean value theorem) and $\sqrt{\frac{v}{\delta}}  \dot X_{\delta}:[0,T) \rightarrow M$ are  integrable for each case respectively. This means that in each case the limit it of $X_{\delta}(t)$ exists, since $\| \int_a^T \dot X_{\delta} dt \| \leq \int_a^T \|  \dot X_{\delta} \| dt< \infty$, for $a=0$ or $\delta$, and in general belongs in the completion of $M$. Since $M$ is complete, the limit is in $M$. Thus we can extend the maximal existence interval. So, we have a contradiction.  Thus we can find an 
$X_{\delta}:[0,\infty) \rightarrow M$ to be a solution of the initial smoothed ODE and $X_{\delta}:[0,\infty) \rightarrow R^m$ its corresponding solution in local coordinates. Note that $\nabla \dot X_{\delta}$ is well-defined at $0$. Our purpose is to apply Arzela-Ascoli theorem in the family of the obtained solutions to get a solution for the initial ODE $\nabla \dot X+\frac{v}{t} \dot X + \textnormal{\textnormal{gradf}}(X)=0$. There are two types of parallel transport appearing in the proof, $\Gamma$ for the parallel transport along $X_{\delta}$ and $\Tilde{\Gamma}$ for the one along some
geodesic connecting the two points. When we have a covariant derivative, it refers to the first, while geodesic $L$-smoothness to the second. Their common characteristic is that they are both orthogonal transformations, thus they preserve lengths of vectors.
\newline
Now we proceed as follows:
\newline
\begin{enumerate}

\item  We define 
\begin{equation*}
\begin{split}
M_{\delta}(t)=\sup\left\lbrace\frac{\| \dot X_{\delta} (u) \|}{u}, u \in (0,t]\right\rbrace,
\end{split}
\end{equation*}
and note that it is finite, because 
\begin{equation*}
\begin{split}
\frac{\| \dot X_{\delta} (u) \|}{u}=\frac{\| \Gamma_{X_{\delta} (u)}^{X_{\delta} (0)} \dot X_{\delta} (u)-\dot X_{\delta} (0) \|}{u}=\| \nabla \dot X_{\delta}  (0) \| +o(1)
\end{split}
\end{equation*}
 for small $u$.
\newline
\item  We have that $\| \textnormal{\textnormal{gradf}}(X_{\delta}(u))-\Tilde{\Gamma}_{x_0}^{X_{\delta}(u)} \textnormal{\textnormal{gradf}}(x_0) \| \leq \frac{1}{2}LM_{\delta}(u)u^2$. Indeed, by Lipschitz assumption about $f$, we have that 
\begin{multline*}
    \| \textnormal{\textnormal{gradf}}(X_{\delta}(u))-\Tilde{\Gamma}_{x_0}^{X_{\delta}(u)} \textnormal{\textnormal{gradf}}(x_0) \| \\ \leq L d(X_{\delta}(u),x_0) \leq L \int_0^u \| \dot X_{\delta}(s) \| ds = L \int_0^u s \frac{\| \dot X_{\delta}(s) \|}{s} ds \leq \frac{1}{2} L M_{\delta}(u)u^2.
\end{multline*}
\item  For $\delta < \sqrt{\frac{6}{L}}$, we have that
\begin{equation*}
\begin{split}
M_{\delta}(\delta) \leq \frac {\| \textnormal{\textnormal{gradf}}(x_0)  \|}{1-\frac{L \delta^2}{6}}.
\end{split}
\end{equation*}
Indeed for $0<t\leq \delta$, we have 
\begin{equation*}
\begin{split}
\nabla \dot X_{\delta}+ \frac{v}{\delta} \dot X_{\delta} + \textnormal{\textnormal{gradf}}(X_{\delta})=0.
\end{split}
\end{equation*}
This equation can be written as 
\begin{align*}
    \nabla (\dot X_{\delta}(t) e^{\frac{vt}{\delta}})=-\textnormal{\textnormal{gradf}}(X_{\delta}) e^{\frac{vt}{\delta}}.
\end{align*}
By Lemma \ref{mean value manifolds} we have
\begin{multline*}
    \Gamma_{X_{\delta}(t)} ^ {x_0} \dot X_{\delta}(t) e^{\frac{vt}{\delta}}\\= -\int_0^t (\Gamma_{X_{\delta}(u)}^{x_0} \textnormal{\textnormal{gradf}}(X_{\delta}(u))-\Gamma_{X_{\delta}(u)}^{x_0} \Tilde{\Gamma}_{x_0}^{X_{\delta}(u)}  \textnormal{\textnormal{gradf}}(x_0)) e^{\frac{vt}{\delta}} du -\int_0^t \Gamma_{X_{\delta}(u)}^{x_0} \Tilde{\Gamma}_{x_0}^{X_{\delta}(u)}  \textnormal{\textnormal{gradf}}(x_0))e^{\frac{vt}{\delta}} du. 
\end{multline*}
Using point 2 and the fact that parallel transports $\Gamma, \Tilde{\Gamma}$ are orthogonal transformations, thus they preserve lengths, we can follow the proof of Lemma 15 in \cite{su2014differential}.
\newline
\item  For $\delta< \sqrt{\frac{6}{L}}$ and $\delta<t<\sqrt{\frac{2(v+3)}{L}}$, we have
\begin{equation*}
\begin{split}
&M_{\delta}(t) \leq  \frac{(v+2-\frac{L \delta^2}{6})\| \textnormal{\textnormal{gradf}}(x_0) \|}{(v+1) (1-\frac{L \delta^2}{6})(1-\frac{L t^2}{2(v+3)})}.
\end{split}
\end{equation*}

Indeed for $t>\delta$ the smoothed ODE is 
\begin{equation*}
\begin{split}
 \nabla \dot {X}_{\delta}+\frac{v}{t} \dot X_{\delta} + \textnormal{\textnormal{gradf}}({X}_{\delta})=0.
\end{split}
\end{equation*}
This equation is equivalent to 
\begin{equation*}
\begin{split}
 \frac{d(t^v \dot X_{\delta}(t))} {dt} &=-t^v \textnormal{\textnormal{gradf}}(X_{\delta}(t))
\end{split}
\end{equation*}
and using again Lemma \ref{mean value manifolds}, we get
\begin{multline*}
\Gamma_{X_{\delta}(t)}^{X_{\delta}(\delta)} t^v \dot X_{\delta}(t)-\delta^v \dot X_{\delta}(\delta)=\\-\int_{\delta}^t (\Gamma_{X_{\delta}(u)}^{X_{\delta}(\delta)} \textnormal{\textnormal{gradf}}(X_{\delta}(u))-\Gamma_{X_{\delta}(u)}^{X_{\delta}(\delta)} \Tilde{\Gamma}_{x_0}^{X_{\delta}(u)} \textnormal{\textnormal{gradf}}(x_0)) u^v du - \int_{\delta}^t\Gamma_{X_{\delta}(u)}^{X_{\delta}(\delta)} \Tilde{\Gamma}_{x_0}^{X_{\delta}(u)} \textnormal{\textnormal{gradf}}(x_0) u^v du. 
\end{multline*}
Rearranging, putting norms and dividing by $t^{v+1}$, we get 
\begin{equation*}
\begin{split}
&\frac{\| \dot X_{\delta}(t) \|}{t}  \leq  \frac{t^{v+1}-\delta^{v+1}}{(v+1)t^{v+1}} \| \textnormal {\textnormal{gradf}}(x_0) \|+\frac{1}{t^{v+1}} \int_{\delta}^t \frac{1}{2} L M_{\delta}(u)u^{v+2} du +\frac{\delta^{v+1}}{t^{v+1}} \frac{\| \dot X_{\delta} (\delta) \|}{\delta} \\ &\leq \frac{1}{v+1} \| \textnormal{\textnormal{gradf}}(x_0) \|+\frac{1}{2(v+3)} L M_{\delta} (t) t^2+\frac{\| \textnormal{\textnormal{gradf}}(x_0) \|}{1-\frac{L \delta^2}{6}},
\end{split}
\end{equation*}
using again that parallel transport preserve lengths.
The last expression is an increasing function of $t$, thus for any $t' \in (\delta,t)$ we have
\begin{equation*}
    \frac{\| \dot X_{\delta}(t') \|}{t'} \leq \frac{1}{v+1} \| \textnormal{\textnormal{gradf}}(x_0) \|+\frac{1}{2(v+3)} L M_{\delta} (t) t^2+\frac{\| \textnormal{\textnormal{gradf}}(x_0) \|}{1-\frac{L \delta^2}{6}}.
\end{equation*}
Taking the supremum over all $t' \in (0,t)$ and rearranging, we get the result.

\item  The family $A:=\lbrace X_{\delta}:[0,\sqrt{\frac{v+3}{L}}] \rightarrow \mathbb{R}/\delta=\frac{\sqrt{\frac{3}{L}}}{2^n}, n=0,1,... \rbrace$ is uniformly bounded and equicontinuous.
By the definition of $M_{\delta}$ we have that $\| \dot X_{\delta} \| \leq \sqrt{\frac{v+3}{L}} M_{\delta}(\sqrt{\frac{v+3}{L}})$. For $t \in [0,\sqrt{\frac{v+3}{L}}]$ and $\delta \in (0,\sqrt{\frac{3}{L}})$, we get a uniform bound for 
$\| \dot X_{\delta} \|$: 
\begin{equation*}
\begin{split}
\| \dot X_{\delta} \| \leq \sqrt{\frac{v+3}{L}} \max \left\lbrace \frac{\| \textnormal{\textnormal{gradf}}(x_0) \|}{1-\frac{1}{2}}, \frac{(v+2-\frac{1}{2}) \| \textnormal{\textnormal{gradf}}(x_0) \|}{(v+1) (1-\frac{1}{2})(1-\frac{1}{2})} \right\rbrace.
\end{split}
\end{equation*}
This implies that $A$ is equicontinuous. In addition,
\begin{equation*}
\begin{split}
& d(X_{\delta}(t),X_{\delta}(0)) \leq \int_0^t \| \dot X_{\delta}(u) \| du \leq \frac{v+3}{L} \max \left\lbrace \frac{\| \textnormal{\textnormal{gradf}}(x_0) \|}{1-\frac{1}{2}}, \frac{(v+2-\frac{1}{2}) \| \textnormal{\textnormal{gradf}}(x_0) \|}{(v+1) (1-\frac{1}{2})(1-\frac{1}{2})} \right\rbrace.
\end{split}
\end{equation*}
Thus $A$ is also uniformly bounded.
\newline
Finally we are ready to apply the Arzela-Ascoli theorem. We use a version which can be applied to Riemannian manifolds, see \cite{Kelley_textbook} (Theorem 17, page 233). We also make use of the fact that our manifold has been assumed to be complete to guarantee point (b) of the theorem.
\newline
It implies that $A$ contains a subsequence, which converges uniformly on $[0,\sqrt{\frac{v+3}{L}}]$. Let $\lbrace X_{\delta_{m_i}} \rbrace$ be this convergent subsequence and $w$ the limit. Pick a point $t_0 \in (0, \sqrt{\frac{v+3}{L}})$. Since $\| \dot X_{\delta}(t_0) \|$ is bounded, it has a convergent subsequence, which can be assumed without loss of generality to be the whole sequence. Denote by $s$ the local solution of our smoothed differential equation, such that $s(t_0)=w(t_0)$ and $\dot s(t_0)=\dot X_{\delta_{m_i}}(t_0)$, if $\delta_{m_i}<t_0$. We conclude that there exists $\epsilon_0>0$, such that $\sup \lbrace \| X_{\delta_{m_i}}(t)-s(t) \| / t_0-\epsilon_0<t<t_0+\epsilon_0 \rbrace$ tends to $0$, when $i$ goes to $\infty$. By definition of $w$, we have the same convergence for $w$ in the place of $s$. Thus $s\equiv w$ in $(t_0-\epsilon_0,t_0+\epsilon_0)$, thus they coincide also at $t_0$, therefore $w$ is a solution of the (non-smoothed) ODE at $t_0$. But $t_0$ was arbitrary, so $w$ is a solution of the (non-smoothed) ODE on $(0, \sqrt{\frac{v+3}{L}})$. We can extend $w$ until $\infty$ to get a global solution. Now it remains to verify the initial conditions. Since $X_{\delta_{m_i}}(0)=x_0$ and $X_{\delta_{m_i}}(0) \rightarrow w(0)$, we get easily that $w(0)=x_0$. For the condition of the initial velocity, we pick a small $t>0$ and consider 
\begin{equation*}
\begin{split}
\frac{d(w(t),w(0))}{t}=\lim_{i\rightarrow \infty} \frac{ d(X_{\delta_{m_i}}(t),X_{\delta_{m_i}}(0))}{t} \leq  \lim_{i \rightarrow \infty} \frac{1}{t} \int_0^t \| \dot X_{\delta_{m_i}}(u) \| du = \lim_{i\rightarrow \infty} \| \dot X_{\delta_{m_i}}(l_i) \|,
\end{split}
\end{equation*}
where $l_i \in (0,t)$ is obtained by the mean value theorem. By the definition of $M_{\delta}$, we get that the left hand side is less or equal than 
\begin{equation*}
\begin{split}
\limsup_{i\rightarrow \infty} \ t M_{\delta_{m_i}}(t) \leq t \sqrt{\frac{v+3}{L}} \max \left\lbrace \frac{\| \textnormal{\textnormal{gradf}}(x_0) \|}{1-\frac{1}{2}}, \frac{(v+2-\frac{1}{2}) \| \textnormal{\textnormal{gradf}}(x_0) \|}{(v+1) (1-\frac{1}{2})(1-\frac{1}{2})} \right\rbrace.
\end{split}
\end{equation*}
Sending $t$ to $0$, we get $\dot w(0)=0$ and we are done.
\end{enumerate}
\end{proof}
\meanvalue*
\begin{proof}
Consider the function $g:[a,b] \rightarrow T_{X(a)} M$, defined by
\begin{equation*}
    g(t)= \Gamma_{X(t)}^{X(a)} A(t).
\end{equation*}
$T_{X(a)} M$ is a linear space, and we have
\begin{equation*}
    g(b)-g(a)= \int_a^b \dot g(t) dt.
\end{equation*}
We have that $g(b)=\Gamma_{X(b)}^{X(a)} A(b)$, $g(a)=A(a)$ and
\begin{align*}
    \dot g(t) &= \lim_{s \rightarrow t} \frac{g(s)-g(t)}{s-t}=\lim_{s \rightarrow t}\frac{\Gamma_{X(s)}^{X(a)} A(s)-\Gamma_{X(t)}^{X(a)} A(t)}{s-t}= \Gamma_{X(t)}^{X(a)} \lim_{s \rightarrow t}\frac{\Gamma_{X(a)}^{X(t)} \Gamma_{X(s)}^{X(a)} A(s)- A(t)}{s-t} \\ &= \Gamma_{X(t)}^{X(a)} \lim_{s \rightarrow t}\frac{ \Gamma_{X(s)}^{X(t)} A(s)- A(t)}{s-t}= \Gamma_{X(t)}^{X(a)} \nabla A(t).
\end{align*}
We can subtract $\Gamma_{X(t)}^{X(a)}$ from the limit, because it is independent of $s$. We can write $\Gamma_{X(a)}^{X(t)} \Gamma_{X(s)}^{X(a)}=\Gamma_{X(s)}^{X(t)}$, because all the parallel transports are along the same curve $X$. Putting all together, we get the result.
\end{proof}


\section{Proofs of convergence}
\subsection{The convex case}
\convex*
\begin{proof}
Consider the Lyapunov function
\begin{equation*}
    \epsilon(t)=t^2(f(X)-f(x^*))+2\| -\textnormal{\textnormal{log}}_X(x^*)+\frac{t}{2} \dot X \|^2 + 2(\zeta-1) \| \textnormal{\textnormal{log}}_X(x^*) \|^2.
\end{equation*}
We have that 
\begin{align*}
  \dot \epsilon(t)&=2t(f(X)-f(x^*))+t^2\langle \textnormal{\textnormal{gradf}}(X), \dot X \rangle \\ &+ 4\langle -\textnormal{\textnormal{log}}_X(x^*)+\frac{t}{2} \dot X, -\nabla \textnormal{\textnormal{log}}_X(x^*)+\frac{1}{2} \dot X+\frac{t}{2} \nabla \dot X\rangle +4(\zeta-1)\langle -\dot X,\textnormal{\textnormal{log}}_X(x^*) \rangle \\ &= 2t(f(X)-f(x^*))+t^2\langle \textnormal{\textnormal{gradf}}(X), \dot X \rangle \\ &+  4\langle -\textnormal{\textnormal{log}}_X(x^*)+\frac{t}{2} \dot X, -\nabla \textnormal{\textnormal{log}}_X(x^*)-\zeta \dot X+\zeta\dot X+\frac{1}{2} \dot X+\frac{t}{2} \nabla \dot X \rangle +4(\zeta-1)\langle -\dot X,\textnormal{\textnormal{log}}_X(x^*) \rangle \\ &= 2t(f(X)-f(x^*))+t^2\langle \textnormal{\textnormal{gradf}}(X), \dot X \rangle \\ &+  4\langle -\textnormal{\textnormal{log}}_X(x^*)+\frac{t}{2} \dot X, -\nabla \textnormal{\textnormal{log}}_X(x^*)-\zeta \dot X-\frac{t}{2} \textnormal{\textnormal{gradf}}(X) \rangle +4(\zeta-1)\langle -\dot X,\textnormal{\textnormal{log}}_X(x^*) \rangle  \\ &= 2t(f(X)-f(x^*))-2t\langle -\textnormal{\textnormal{log}}_X(x^*), \textnormal{\textnormal{gradf}}(X) \rangle +t^2\langle \textnormal{\textnormal{gradf}}(X), \dot X \rangle-t^2 \langle \textnormal{\textnormal{gradf}}(X), \dot X \rangle \\ &+  4\langle -\textnormal{\textnormal{log}}_X(x^*)+\frac{t}{2} \dot X, -\nabla \textnormal{\textnormal{log}}_X(x^*)-\zeta \dot X \rangle +4(\zeta-1)\langle -\dot X,\textnormal{\textnormal{log}}_X(x^*) \rangle \\ & \leq 4\langle -\textnormal{\textnormal{log}}_X(x^*)+\frac{t}{2} \dot X, -\nabla \textnormal{\textnormal{log}}_X(x^*)-\zeta \dot X \rangle +4(\zeta-1)\langle -\dot X,\textnormal{\textnormal{log}}_X(x^*) \rangle, 
\end{align*}
by geodesic convexity. 

The last expression can be written as
\begin{align*}
    &4 \langle \textnormal{\textnormal{log}}_X(x^*), \nabla \textnormal{\textnormal{log}}_X(x^*) \rangle+4 \zeta\langle \textnormal{\textnormal{log}}_X(x^*), \dot X \rangle +4(\zeta-1)\langle -\dot X,\textnormal{\textnormal{log}}_X(x^*) \rangle+ 2t  (\langle \nabla \textnormal{\textnormal{log}}_X(x^*), -\dot X \rangle- \zeta \| \dot X \|^2) \\ &= 2 \frac{d}{dt}d(X,x^*)^2-2 \zeta \frac{d}{dt} d(X,x^*)^2 +2(\zeta-1) \frac{d}{dt} d(X,x^*)^2+ 2t (\langle \nabla \textnormal{\textnormal{log}}_X(x^*), -\dot X \rangle- \zeta \| \dot X \|^2) \\ &= 2t (\langle \nabla \textnormal{\textnormal{log}}_X(x^*), -\dot X \rangle- \zeta \| \dot X \|^2)\leq 0
\end{align*}
by Lemma \ref{le:result for covariant derivative}.
Thus 
\begin{align*}
t^2(f(X)-f(x^*))\leq \epsilon(t)\leq \epsilon(0)=2\zeta \| \textnormal{\textnormal{log}}_{x_0}(x^*) \|^2,
\end{align*}
and the result follows.
\end{proof}

\subsection{The weakly-quasi-convex case}
\weaklyquasiconvex*
\begin{proof}
Consider the Lyapunov function
\begin{equation*}
    \epsilon(t)=\alpha^2 t^2(f(X)-f(x^*))+2\| -\textnormal{\textnormal{log}}_X(x^*)+\frac{\alpha t}{2} \dot X \|^2 + 2(\zeta-1) \| \textnormal{\textnormal{log}}_X(x^*) \|^2.
\end{equation*}
We have that 
\begin{align*}
  \dot \epsilon(t)&=2\alpha^2t(f(X)-f(x^*))+\alpha^2 t^2\langle \textnormal{\textnormal{gradf}}(X), \dot X \rangle \\ &+ 4\langle -\textnormal{\textnormal{log}}_X(x^*)+\frac{\alpha t}{2} \dot X, -\nabla \textnormal{\textnormal{log}}_X(x^*)+\frac{\alpha}{2} \dot X+\frac{\alpha t}{2} \nabla \dot X\rangle +4(\zeta-1)\langle -\dot X,\textnormal{\textnormal{log}}_X(x^*) \rangle \\ &= 2\alpha^2t(f(X)-f(x^*))+\alpha^2 t^2\langle \textnormal{\textnormal{gradf}}(X), \dot X \rangle \\ &+  4\langle -\textnormal{\textnormal{log}}_X(x^*)+\frac{\alpha t}{2} \dot X, -\nabla \textnormal{\textnormal{log}}_X(x^*)-\zeta \dot X+\zeta\dot X+\frac{\alpha}{2} \dot X+\frac{\alpha t}{2} \nabla \dot X \rangle +4(\zeta-1)\langle -\dot X,\textnormal{\textnormal{log}}_X(x^*) \rangle \\ &= 2 \alpha^2 t(f(X)-f(x^*))+ \alpha^2 t^2\langle \textnormal{\textnormal{gradf}}(X), \dot X \rangle \\ &+  4\langle -\textnormal{\textnormal{log}}_X(x^*)+\frac{\alpha t}{2} \dot X, -\nabla \textnormal{\textnormal{log}}_X(x^*)-\zeta \dot X-\frac{\alpha t}{2} \textnormal{\textnormal{gradf}}(X) \rangle +4(\zeta-1)\langle -\dot X,\textnormal{\textnormal{log}}_X(x^*) \rangle  \\ &= 2\alpha^2 t(f(X)-f(x^*))-2 \alpha t \langle -\textnormal{\textnormal{log}}_X(x^*), \textnormal{\textnormal{gradf}}(X) \rangle +\alpha^2 t^2\langle \textnormal{\textnormal{gradf}}(X), \dot X \rangle-\alpha^2 t^2 \langle \textnormal{\textnormal{gradf}}(X), \dot X \rangle \\ &+  4\langle -\textnormal{\textnormal{log}}_X(x^*)+\frac{\alpha t}{2} \dot X, -\nabla \textnormal{\textnormal{log}}_X(x^*)-\zeta \dot X \rangle +4(\zeta-1)\langle -\dot X,\textnormal{\textnormal{log}}_X(x^*) \rangle \\ & \leq 4\langle -\textnormal{\textnormal{log}}_X(x^*)+\frac{\alpha t}{2} \dot X, -\nabla \textnormal{\textnormal{log}}_X(x^*)-\zeta \dot X \rangle +4(\zeta-1)\langle -\dot X,\textnormal{\textnormal{log}}_X(x^*) \rangle  
\end{align*}
by geodesic $\alpha$-weak-quasi-convexity.
The last expression can be written as
\begin{align*}
    &4 \langle \textnormal{\textnormal{log}}_X(x^*), \nabla \textnormal{\textnormal{log}}_X(x^*) \rangle+4 \zeta\langle \textnormal{\textnormal{log}}_X(x^*), \dot X \rangle +4(\zeta-1)\langle -\dot X,\textnormal{\textnormal{log}}_X(x^*) \rangle+ 2 \alpha t (\langle \nabla \textnormal{\textnormal{log}}_X(x^*), -\dot X \rangle- \zeta \| \dot X \|^2) \\ &= 2 \frac{d}{dt}d(X,x^*)^2-2 \zeta \frac{d}{dt} d(X,x^*)^2 + 2 (\zeta-1) \frac{d}{dt} d(X,x^*)^2+ 2 \alpha t (\langle \nabla \textnormal{\textnormal{log}}_X(x^*), -\dot X \rangle- \zeta \| \dot X \|^2) \\ &= 2 \alpha t (\langle \nabla \textnormal{\textnormal{log}}_X(x^*), -\dot X \rangle- \zeta \| \dot X \|^2)\leq 0
\end{align*}
by Lemma \ref{le:result for covariant derivative}.
Thus, 
\begin{align*}
\alpha^2 t^2(f(X)-f(x^*))\leq \epsilon(t)\leq \epsilon(0)=2\zeta \| \textnormal{\textnormal{log}}_{x_0}(x^*) \|^2    
\end{align*}
and the result follows.
\end{proof}\newpage

\subsection{The strongly convex case}
\stronglyconvex*
\begin{proof}
Consider the Lyapunov function
\begin{equation*}
    \epsilon(t)= e^{\sqrt{\frac{\mu}{\zeta}}t}\left(\frac{\mu}{2 \zeta} \| -\textnormal{\textnormal{log}}_X(x^*)+\sqrt{\frac{\zeta}{\mu}} \dot X \|^2+f(X)-f^*+\frac{\mu (\zeta-1)}{2 \zeta} \| \textnormal{\textnormal{log}}_X(x^*)\|^2\right).
\end{equation*}
We have that 
\begin{align*}
    &\frac{d}{dt} \left(e^{\sqrt{\frac{\mu}{\zeta}}t}\left(\frac{\mu}{2 \zeta} \| -\textnormal{\textnormal{log}}_X(x^*)+\sqrt{\frac{\zeta}{\mu}} \dot X \|^2+\frac{\mu (\zeta-1)}{2 \zeta}\| \textnormal{\textnormal{log}}_X(x^*) \|^2\right)\right) \\ &=
    \sqrt{\frac{\mu}{\zeta}}\frac{\mu}{\zeta}e^{\sqrt{\frac{\mu}{\zeta}}t}\left(\frac{1}{2} \| -\textnormal{\textnormal{log}}_X(x^*)+\sqrt{\frac{\zeta}{\mu}} \dot X \|^2+ \frac{\zeta-1}{2} \| \textnormal{\textnormal{log}}_X(x^*) \|^2\right) \\ &+ 
    \frac{\mu}{\zeta}e^{\sqrt{\frac{\mu}{\zeta}}t} \left(\langle -\textnormal{\textnormal{log}}_X(x^*)+\sqrt{\frac{\zeta}{\mu}} \dot X, -\nabla \textnormal{\textnormal{log}}_X(x^*)+
    \sqrt{\frac{\zeta}{\mu}} \nabla \dot X \rangle+(\zeta-1) \langle \textnormal{\textnormal{log}}_X(x^*), -\dot X \rangle\right) \\ &=
    \sqrt{\frac{\mu}{\zeta}}\frac{\mu}{\zeta}e^{\sqrt{\frac{\mu}{\zeta}}t}\left(\frac{1}{2} \| -\textnormal{\textnormal{log}}_X(x^*)+\sqrt{\frac{\zeta}{\mu}} \dot X \|^2+ \frac{\zeta-1}{2} \| \textnormal{\textnormal{log}}_X(x^*) \|^2\right) \\ &+
    \frac{\mu}{\zeta}e^{\sqrt{\frac{\mu}{\zeta}}t} \left(\langle -\textnormal{\textnormal{log}}_X(x^*)+\sqrt{\frac{\zeta}{\mu}} \dot X, -\nabla \textnormal{\textnormal{log}}_X(x^*)-\zeta \dot X + \zeta \dot X+
    \sqrt{\frac{\zeta}{\mu}} \nabla \dot X \rangle +(\zeta-1) \langle \textnormal{\textnormal{log}}_X(x^*), -\dot X \rangle\right) \\ &= 
    \sqrt{\frac{\mu}{\zeta}}\frac{\mu}{\zeta}e^{\sqrt{\frac{\mu}{\zeta}}t}\left(\frac{1}{2} \| -\textnormal{\textnormal{log}}_X(x^*)+\sqrt{\frac{\zeta}{\mu}} \dot X \|^2+ \frac{\zeta-1}{2} \| \textnormal{\textnormal{log}}_X(x^*) \|^2\right) \\ &+
    \frac{\mu}{\zeta}e^{\sqrt{\frac{\mu}{\zeta}}t} \left(\langle -\textnormal{\textnormal{log}}_X(x^*)+\sqrt{\frac{\zeta}{\mu}} \dot X, -\nabla \textnormal{\textnormal{log}}_X(x^*)-\zeta \dot X \rangle +(\zeta-1) \langle \textnormal{\textnormal{log}}_X(x^*), -\dot X \rangle\right.\\ &+ \left.
    \langle -\textnormal{\textnormal{log}}_X(x^*)+\sqrt{\frac{\zeta}{\mu}} \dot X, -\dot X-\sqrt{\frac{\zeta}{\mu}} \textnormal{\textnormal{gradf}}(X) \rangle\right).
\end{align*}
The expression
\begin{equation*}
    \frac{\mu}{\zeta}e^{\sqrt{\frac{\mu}{\zeta}}t} \left(\langle -\textnormal{\textnormal{log}}_X(x^*)+\sqrt{\frac{\zeta}{\mu}} \dot X, -\nabla \textnormal{\textnormal{log}}_X(x^*)-\zeta \dot X \rangle +(\zeta-1) \langle \textnormal{\textnormal{log}}_X(x^*), -\dot X \rangle\right)
\end{equation*}
is equal to 
\begin{align*}
    \frac{\mu}{\zeta}e^{\sqrt{\frac{\mu}{\zeta}}t} ((1-\zeta) (d(X,x^*)^2)'+(\zeta-1)(d(X,x^*)^2)'+\sqrt{\frac{\zeta}{\mu}}(\langle -\nabla \textnormal{\textnormal{log}}_X(x^*), \dot X \rangle-\zeta \| \dot X \|^2)) \leq 0
\end{align*}
by Lemma \ref{le:result for covariant derivative}.
\newline
Thus, we have
\begin{align*}
    &\frac{d}{dt} \left(e^{\sqrt{\frac{\mu}{\zeta}}t}\left(\frac{\mu}{2 \zeta} \| -\textnormal{\textnormal{log}}_X(x^*)+\sqrt{\frac{\zeta}{\mu}} \dot X \|^2+\frac{\mu (\zeta-1)}{2 \zeta} \| \textnormal{\textnormal{log}}_X(x^*) \|^2\right)\right) \\ & \leq
    \sqrt{\frac{\mu}{\zeta}}\frac{\mu}{\zeta}e^{\sqrt{\frac{\mu}{\zeta}}t}\left(\frac{1}{2} \| -\textnormal{\textnormal{log}}_X(x^*)+\sqrt{\frac{\zeta}{\mu}} \dot X \|^2+\langle -\textnormal{\textnormal{log}}_X(x^*)+\sqrt{\frac{\zeta}{\mu}} \dot X , -\sqrt{\frac{\zeta}{\mu}}\dot X\rangle\right.  \\ &\left.+\frac{\zeta-1}{2} \| \textnormal{\textnormal{log}}_X(x^*) \|^2\right)+ \frac{\mu}{\zeta}e^{\sqrt{\frac{\mu}{\zeta}}t} \langle -\textnormal{\textnormal{log}}_X(x^*)+\sqrt{\frac{\zeta}{\mu}} \dot X, -\dot X-\sqrt{\frac{\zeta}{\mu}} \textnormal{\textnormal{gradf}}(X) \rangle \\ &=
    \sqrt{\frac{\mu}{\zeta}}\frac{\mu}{\zeta}e^{\sqrt{\frac{\mu}{\zeta}}t}
    \left(-\frac{1}{2} \| \sqrt{\frac{\mu}{\zeta}} \dot X \|^2+\frac{1}{2} \| \textnormal{\textnormal{log}}_X(x^*) \|^2\right. \\ &\left.+ \frac{\zeta-1}{2} \| \textnormal{\textnormal{log}}_X(x^*) \|^2\right) + \frac{\mu}{\zeta}e^{\sqrt{\frac{\mu}{\zeta}}t} \langle -\textnormal{\textnormal{log}}_X(x^*)+\sqrt{\frac{\zeta}{\mu}} \dot X, -\dot X-\sqrt{\frac{\zeta}{\mu}} \textnormal{\textnormal{gradf}}(X) \rangle \\ & \leq
    \sqrt{\frac{\mu}{\zeta}}e^{\sqrt{\frac{\mu}{\zeta}}t} \frac{\mu}{2} \| \textnormal{\textnormal{log}}_X(x^*) \|^2+\sqrt{\frac{\mu}{\zeta}}e^{\sqrt{\frac{\mu}{\zeta}}t}\langle \textnormal{\textnormal{log}}_X(x^*), \textnormal{\textnormal{gradf}}(X) \rangle-e^{\sqrt{\frac{\mu}{\zeta}}t} \langle \textnormal{\textnormal{gradf}}(X), \dot X \rangle \\ &\leq
    -\sqrt{\frac{\mu}{\zeta}}e^{\sqrt{\frac{\mu}{\zeta}}t}(f(X)-f^*)-e^{\sqrt{\frac{\mu}{\zeta}}t} \langle \textnormal{\textnormal{gradf}}(X), \dot X \rangle \\ &=
    \frac{d}{dt} \left(-e^{\sqrt{\frac{\mu}{\zeta}}t} (f(X)-f^*)\right),
\end{align*}
where the last inequality follows from geodesic $\mu$-strong convexity of $f$. Thus, $\dot \epsilon (t) \leq 0$ and the result follows.
\end{proof}


\section{Proofs about shadowing}
\subsection{Pseudo-orbit property}

In this subsection, we prove that the continuous-time limit of Riemannian gradient descent ($y=-\textnormal{gradf}(y)$) returns a pseudo-orbit of Riemannian gradient descent. This result is standard in numerical analysis~(error of Euler integration), and can be also found (in a less general form) as Proposition 2 in~\cite{absil2012projection}.  

We recall that, in analogy with~\cite{Antonioshadowing}, we assume that $f:M \rightarrow R$ is a $C^2$ function such that for all points on the ODE solution, $\| \textnormal{gradf}(x) \| \leq \ell$ and $\mu \leq \| \textnormal{Hessf}(x) \| \leq L$.
\pseudoorbit*
\begin{proof}
We consider the curve $y:[kh,kh+h] \rightarrow M$ which is the solution of the gradient flow ODE $\dot y=-\textnormal{gradf}(y)$ and the geodesic $\gamma(t-kh)=\exp_{y(kh)}(t \dot y(kh))$, which has the same initial velocity with $y$. Here $y_k=y(kh)$ and $y_{k+1}=y((k+1)h)$.
\newline
The manifold $M$ can be considered as a submanifold of $\mathbb{R}^n$ for sufficiently large $n$, up to isometry, because of the Nash-Kuiper embedding theorem. Thus we can expand $y$ and $\gamma$ using a Taylor series in the ambient space:
\begin{align*}
    & y(kh+h)=y(kh)+h \dot y(kh)+\int_{kh}^{kh+h} \frac{\ddot y(t)}{2}(kh+h-t)^2 dt; \\ &
    \gamma(kh+h)=\gamma(kh)+h \dot \gamma(kh)+\int_{kh}^{kh+h} \frac{\ddot \gamma(t)}{2}(kh+h-t)^2 dt.
\end{align*}
We have that $y(kh)=\gamma(kh)$, $\dot y(kh)= \dot \gamma(kh)$, thus
\begin{equation*}
    y(kh+h)-\gamma(kh+h)=\int_{kh}^{kh+h} \frac{\ddot y(t)-\ddot \gamma(t)}{2}(kh+h-t)^2 dt
\end{equation*}
and
\begin{equation*}
    \| y(kh+h)-\gamma(kh+h) \| \leq \frac{h^2}{2} \| \ddot y(t_0)-\ddot \gamma(t_0) \|. 
\end{equation*}
by the mean value theorem for integrals, where $t_0 \in (kh, kh+h)$.
\newline
One can easily check that
\begin{equation*}
    d(y(kh+h),\gamma(kh+h)) \leq A \| y(kh+h)-\gamma(kh+h) \|,
\end{equation*}
where $A$ is a constant depending on the bound $D$ of the working domain.
\newline
Indeed let two points $p,q \in M$. If $p$ and $q$ are very close together then the ratio of the intrinsic to extrinsic distance tends to $1$, check lemma 4.2.7 in \cite{salamon2019}. Obviously $d(p,q) \rightarrow 0$ if and only if $\| p-q \| \rightarrow 0$. Thus if $d(p,q) \geq a >0$, then $\| p-q \| \geq b>0$. But our working domain is bounded and $d(p,q) \leq D$. Thus $d(p,q) \leq \frac{D}{b} \| p-q \|$. 
\newline
The term $\| \ddot y(t_0)-\ddot \gamma(t_0) \|$ can be written by the Gauss-Weingarden formula as
\begin{equation*}
    \| \nabla \dot y(t_0)+\mathcal{L}_{y(t_0)}(\dot y(t_0), \dot y(t_0))-\mathcal{L}_{\gamma(t_0)}(\dot \gamma(t_0), \dot \gamma(t_0)) \|
\end{equation*}
where $\mathcal{L}$ is the second fundamental form.
The last expression is less or equal than
\begin{equation*}
    \| \nabla \dot y(t_0) \|+\| \mathcal{L}_{y(t_0)}(\dot y(t_0), \dot y(t_0)) \|+ \| \mathcal{L}_{\gamma(t_0)}(\dot \gamma(t_0), \dot \gamma(t_0)) \|.
\end{equation*}
We have that
\begin{equation*}
    \| \nabla \dot y(t_0) \| \leq \| \textnormal{Hessf}(y(t_0)) \| \| \textnormal{gradf}(y(t_0)) \| \leq \ell L
\end{equation*}
by our initial assumptions ($\ell$ bounds the Riemannian gradient and $f$ is $L$-smooth).
\newline
Finally, it is known that the second fundamental form $\mathcal{L}$ is bounded if the sectional curvatures are bounded from above and below (in our case are just constant equal to $K$) and the injectivity radius is bounded from below (this is the case for us, since we assume that there exist always some geodesic connecting any two points in $M$). For a discussion of this fact check \cite{petrunin}.
\newline
Thus
\begin{equation*}
    d(y(kh+h),\exp_{y(kh)}(h \dot y(kh)) \leq C h^2
\end{equation*}
where $C$ is constant depending only on the curvature $K$, the dimension of the manifold $M$, $\ell$ and $L$.
\end{proof}

\subsection{Contraction of RGD}
We start with important computations for Jacobi fields in symmetric manifolds. The reader can refer to \cite{leimkuhler1996symplectic} (Section 4.3) and \cite{Klingenberg_textbook} (Section 2.2). According to these references, the Jacobi field $J$ of a symmetric manifold along the geodesic $\textnormal{exp}_p(tw)$ with initial conditions $J(0)=a$ and $\nabla J(0)=b$ is given by the formula:
\begin{equation*}
    J(t)= \Gamma_p^{tw}\left(f_1(t^2 R_w)a+f_2(t^2 R_w) b\right)
\end{equation*}
where
\begin{equation*}
    f_1(z)=\cos(\sqrt{z}),
\end{equation*}
\begin{equation*}
    f_2(z)=\frac{\sin(\sqrt{z})}{\sqrt{z}},
\end{equation*}
and
$R_w:T_p M \rightarrow T_p M$ is defined by
\begin{equation*}
    R_w(u)=R(u,w)w,
\end{equation*}
where $R$ is the Riemann curvature tensor.
\begin{lemma}
A Riemannian manifold $M$ has constant curvature $K$ if and only if
\begin{equation*}
    \langle R(u_1,u_2)u_3,u_4 \rangle=K(\langle u_1,u_4 \rangle \langle u_2,u_3 \rangle - \langle u_1,u_3 \rangle \langle u_2,u_4 \rangle).
\end{equation*}
\end{lemma}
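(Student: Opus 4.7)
The plan is to prove the two implications separately. The forward direction ($\Leftarrow$) is essentially a computation: if the formula for $\langle R(u_1,u_2)u_3,u_4\rangle$ holds, then for any two linearly independent tangent vectors $u,v\in T_pM$ the sectional curvature of the plane $\sigma=\operatorname{span}(u,v)$ is, by definition,
\[
K(\sigma)=\frac{\langle R(u,v)v,u\rangle}{\langle u,u\rangle\langle v,v\rangle-\langle u,v\rangle^2}.
\]
Substituting $u_1=u_4=u$ and $u_2=u_3=v$ into the formula gives exactly $K$ times the denominator above, so $K(\sigma)=K$ at every point and for every plane.

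For the reverse direction ($\Rightarrow$), I would introduce the auxiliary $(0,4)$-tensor
\[
T(u_1,u_2,u_3,u_4):=\langle R(u_1,u_2)u_3,u_4\rangle - K\bigl(\langle u_1,u_4\rangle\langle u_2,u_3\rangle-\langle u_1,u_3\rangle\langle u_2,u_4\rangle\bigr),
\]
and show that $T\equiv 0$. The key observation is that both summands obey the algebraic symmetries of the Riemann curvature tensor: antisymmetry in the first two and in the last two arguments, pair-swap symmetry $T(u_1,u_2,u_3,u_4)=T(u_3,u_4,u_1,u_2)$, and the first Bianchi identity. These follow for the first summand from standard properties of the Levi-Civita connection, and for the second by inspection.

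By the hypothesis of constant sectional curvature $K$, $T(u,v,v,u)=0$ for every pair $u,v\in T_pM$. The main step is then a purely algebraic lemma: any $(0,4)$-tensor with the symmetries of the Riemann tensor that vanishes on all quadruples of the form $(u,v,v,u)$ must vanish identically. I would prove this by polarization, first replacing $u$ with $u+w$ in $T(u,v,v,u)=0$ to obtain $T(u,v,v,w)+T(w,v,v,u)=0$, and then combining this with the pair-swap symmetry and the Bianchi identity (applied to $(u,v,w)$) to deduce $T(u,v,w,z)=0$ for all $u,v,w,z$. The main obstacle I expect is bookkeeping during this polarization argument: one must carefully exploit all four symmetries in the correct order to reduce the general four-argument expression to the diagonal case $(u,v,v,u)$, but this is a standard manipulation in Riemannian geometry and introduces no new geometric input.
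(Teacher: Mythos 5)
The paper states this lemma without proof; it is invoked as a classical fact from Riemannian geometry (it appears, for instance, as Lemma~3.4 in Chapter~4 of do~Carmo's \emph{Riemannian Geometry}). Your proposal correctly reproduces the standard textbook argument: the easy direction by substituting $u_1=u_4=u$, $u_2=u_3=v$ into the formula and reading off the sectional curvature, and the converse by forming the difference tensor $T$, noting it inherits the full set of curvature symmetries (antisymmetry in the first and last pairs, pair-swap, first Bianchi), observing it vanishes on quadruples $(u,v,v,u)$ by the constant-curvature hypothesis, and then polarizing. One small point worth making explicit in the polarization step: after replacing $u$ by $u+w$ you get $T(u,v,v,w)+T(w,v,v,u)=0$, and the pair-swap plus both antisymmetries give $T(w,v,v,u)=T(u,v,v,w)$, hence $T(u,v,v,w)=0$; a second polarization in the second slot then yields antisymmetry of $T$ in slots two and three, and feeding this into the Bianchi identity gives $3\,T(u,v,z,w)=0$. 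This is exactly the bookkeeping you flagged, and it closes the gap cleanly. Your proof is correct and fills in an argument the paper leaves to the reader.
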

Thus 
\begin{equation*}
\begin{split}
    \langle R_w(u),v \rangle= 
    \langle R(u,w)w,v \rangle &= K(\langle u,v \rangle \| w \|^2- \langle u,w \rangle \langle w,v \rangle) = 
    \langle K (\| w \| ^2 u -\langle u,w \rangle w), v \rangle
    \end{split}
\end{equation*}
and we derive that $R_w(u)= K (\| w \|^2 u - \langle u,w \rangle w)$. Define 
$A(u)=\| w \|^2 u - \langle u,w \rangle w$.
Now we must compute the powers of $A$: 
\begin{equation*}
\begin{split}
    A^2(u) & =A(A(u))=\| w \|^2 A(u) - \langle A(u),w \rangle w \\ &=\| w \|^2 u - \langle u,w \rangle w) - \langle \| w \|^2 u - \langle u,w \rangle w,w \rangle w \\ &= \| w \|^2 (\| w \|^2 u - \langle u,w \rangle w)-\| w \| ^2 \langle u,w \rangle w- \langle u,w \rangle \| w \| ^2 w=  \| w \| ^2 A(u).
    \end{split}
\end{equation*}
By induction we conclude that $A^l(u)=\| w \|^{2l-2} A(u)$ for $l \geq 1$ and $A^0(u)= \textnormal{Id}$. This implies that $R_w^l(u)=(K \| w \| ^2)^l\frac{1}{\| w \| ^2}A(u)$, for $l \geq 1$ and $R_w^0(u)=\textnormal{Id}$.
Now we feed $R_w$ to the operators $f_1(z)$ and $f_2(z)$. We extend both these two expressions in power series and we get
\begin{equation*}
\begin{split}
   f_1(R_w) & =\textnormal{Id}+\sum_{l=1}^{\infty} \frac{(-1)^l}{(2l)!} R_w^l =\textnormal{Id}+\sum_{l=1}^{\infty} \left(\frac{(-1)^l}{(2l)!} (K \| w \| ^2)^l\frac{1}{\| w \| ^2} A\right) \\ &= \textnormal{Id}+ \frac{1}{\| w \| ^2}A\left( \sum_{l=0}^{\infty} \left(\frac{(-1)^l}{(2l)!} (K \| w \| ^2)^l\right)-1\right)=\textnormal{Id}+ \frac{1}{\| w \|}A\left( \cos(\sqrt{K}\| w \|\right)-1. 
   \end{split}
\end{equation*}
In exactly the same way we get that
\begin{equation*}
    f_2(R_w)=\textnormal{Id}+ \frac{1}{\| w \| ^2}A\left( \frac{\sin(\sqrt{K}\| w \|)}{\sqrt{K}\| w \|)}-1\right).
\end{equation*}
In our case $w$ is the vector defining a geodesic.
\newline We compute now the operator $f_2(R_w)^{-1}=\left(\textnormal{Id}+\frac{1}{\| w \|^2} \left(\frac{\sin(a)}{a}-1\right) A\right)^{-1}$, where $a=\sqrt{K}\| w \|$.
We have that $f_2(R_w)^{-1}(u)=\frac{a}{\sin(a)} u+\frac{1}{\| w \|^2} \left(1-\frac{a}{\sin(a)}\right)\langle u,w \rangle w$.
\begin{align*}
    \text{Check:}  \ \ \ \ &f_2(R_w)^{-1}(f_2(R_w)(u))   \\ &= \frac{a}{\sin(a)}\left(\frac{\sin(a)}{a}u-\frac{1}{\| w \|^2}\left(\frac{\sin(a)}{a}-1\right)\langle u,w \rangle w\right)\\& \ \ \ + \frac{1}{\| w \|^2}\left(1-\frac{a}{\sin(a)}\right) \langle \frac{\sin(a)}{a} u-\frac{1}{\| w \| ^2}\left(\frac{\sin(a)}{a}-1\right)\langle u,w \rangle w,w\rangle \\&=
    u-\frac{1}{\| w \|^2}\left(1-\frac{a}{\sin(a)}\right) \langle u,w \rangle w\\
    & \ \ \ +\frac{1}{\| w \|^2}\left(\frac{\sin(a)}{a}-1\right) \langle u,w \rangle w - \frac{1}{\| w \|^2} \left(1-\frac{a}{\sin(a)}\right)\left(\frac{\sin(a)}{a}-1\right)\langle u,w \rangle \| w \|^2 w \\ &= 
    u-\frac{1}{\| w \|^2}(1-\frac{a}{\sin(a)}) \langle u,w \rangle w+\frac{1}{\| w \|^2}(1-\frac{a}{\sin(a)}) \langle u,w \rangle w\\&=u,
\end{align*}
because $f_2(R_w)(u)=\frac{\sin(a)}{a}u-\frac{1}{\| w \|^2}\left(\frac{\sin(a)}{a}-1\right) \langle u,w \rangle w$.
We are interested in the norm of the operator $f_2(R_w)$. It is easy to show that it is a self-adjoint operator, thus remains to compute its eigenvalues.
We solve the equation
\begin{equation*}
    f_2(R_w)^{-1}(u)=bu
\end{equation*}
for real numbers $b$. It becomes 
\begin{equation*}
    f_2(R_w)^{-1}(u)=\frac{a}{\sin(a)} u+\frac{1}{\| w \|^2} \left(1-\frac{a}{\sin(a)}\right)\langle u,w \rangle w=bu.
\end{equation*}
    If $u \in w^{\bot}$, then $b=\frac{a}{\sin(a)}$. Since $\textnormal{dim}(w^{\bot})=\textnormal{dim}(M)-1$, $\frac{a}{\sin(a)}$ is an eigenvalue of $f_2(R_w)^{-1}$ of multiplicity $\textnormal{dim}(M)-1$. If $u \in <w>$, then $b=1$. Since $\textnormal{dim}(<w>)=1$, $1$ is an eigenvalue of $f_2(R_w)^{-1}$ with multiplicity $1$, and there are no other eigenvalues.
\newline
In addition, the operators $f_1(R_w)$ and $f_2(R_w)$ are self-adjoint. We briefly check it for the first one. We have
\begin{align*}
    f_1(R_w)(u)=\cos(a) u+\frac{1}{\| w \|^2}(\cos(a)-1) (\| w \|^2 u-\langle u,w \rangle w)
\end{align*}
and
\begin{multline*}
    \langle f_1(R_w)u,v \rangle= \cos(a) \langle u,v \rangle +\frac{1}{\| w \|^2} (\cos(a)-1) \langle u,w \rangle \langle w,v \rangle\\ = \cos(a) \langle v,u \rangle +\frac{1}{\| w \|^2} (\cos(a)-1) \langle v,w \rangle \langle w,u \rangle=\langle u, f_1(R_w)v \rangle.
\end{multline*}

\contraction*
\begin{proof}
Denote $a_1=\textnormal{exp}_{x_1}(-h \textnormal{gradf}(x_1)), a_2= \textnormal{exp}_{x_2}(-h \textnormal{gradf}(x_2))$. We denote the geodesic connecting $x_1$ and $x_2$ by $X$ and create a variation of geodesics defined by $\textnormal{exp}_{X(t)}(uE(t))$ where $E(t)$ is a vector field along $X$ with $E(0)=-h \textnormal{gradf}(x_1)$ and $E(1)=-h \textnormal{gradf}(x_2)$. We have that $d(a_1,a_2)$ is equal to the length of the geodesic connecting $a_1$ and $a_2$, which is less or equal than the length of the curve $\beta(t)=\textnormal{exp}_{X(t)}(E(t))$, because $\beta(0)=a_1$ and $\beta(1)=a_2$. Thus
\begin{align*}
    d(a_1,a_2) \leq \int_0^1 \| \dot \beta(t) \| dt= \| \dot \beta(t_0) \|,
\end{align*}
for some $t_0 \in (0,1)$.
\newline
By the construction of Jacobi fields as measures of variations through geodesics, we have that $\dot \beta(t_0)$ is equal to $J(1)$ where $J$ is the Jacobi field with initial conditions $J(0)=\dot X(t_0)$ and $\nabla J(0)= \nabla E(t_0)$. A valid choice for $E(t)$ is $-h \textnormal{gradf}(X(t))$, thus $\nabla E(t_0)=-h \textnormal{Hess}f(X(t_0)) \dot X(t_0)$. 
\newline
In a complete, connected, simply connected manifold of constant curvature $K$ (i.e. symmetric) we can compute the Jacobi field $J$ precisely. Namely if $J$ is the Jacobi field along the geodesic from $x(t_0)$ to $\beta(t_0)$ with initial conditions $J(0)=a$ and $J(1)=b$, we have
\begin{equation*}
    J(t)= \Gamma_{X(t_0)}^{t w} (f_1(t^2 R_w)a+f_2(t^2R_w) b )
\end{equation*}
where $f_1(z)=\cos(\sqrt{z})$ and $f_2(z)=\frac{\sin(\sqrt{z})}{\sqrt{z}}$ and $w=\textnormal{log}_{X(t_0)}(\beta(t_0))$.
\newline
By our computations for $R_w,f_1$ and $f_2$ above, we get
\begin{align*}
    \| J(1) \|= \| f_1(R_w) \dot X(t_0)- h f_2(R_w) \textnormal{Hessf}(X(t_0)) \dot X(t_0) \| \leq \| f_2(R_w) \| \| \frac{f_1(R_w)}{f_2(R_w)}-h \textnormal{Hessf}(X(t_0)) \| \| \dot X(t_0) \|.
\end{align*}
The eigenvalues of $f_1(R_w)$ are $1$ and $\cos(d)$, while of $\frac{f_1(R_w)}{f_2(R_w)}$, $1$ and $d \cot(d)$, where $d=\sqrt{K} \| w \|$.
The operator $-h \textnormal{Hess}f(X(t_0))$ is symmetric (because it is taken with respect to the Levi-Civita connection, which is torsion-free) and its largest eigenvalue is $-h \mu$. People have proved that the largest eigenvalue of the sum of two hermitian operators is at most the sum of the two largest eigenvalues respectively (check for instance \cite{fulton1998equivalent}). Thus we consider cases regarding the curvature $K$.
\begin{itemize}
    \item If $K \geq 0$, then the largest eigenvalues of $f_2(R_w)$ and $\frac{f_1(R_w)}{f_2(R_w)}$ are both 1. Thus 
    \begin{equation*}
        \| J(1) \| \leq (1-h \mu) \| \dot X(t_0) \|.
    \end{equation*}
    \item If $K<0$, then the largest eigenvalue of $f_2(R_w)$ is $\frac{\sin(d)}{d}$ and of $\frac{f_1(R_w)}{f_2(R_w)}$ is $d \cot(d)$. Thus
    \begin{equation*}
      \| J(1) \| \leq \frac{\sin(d)}{d} (d \cot(d)-h \mu) \| \dot X(t_0) \| \leq \frac{\sinh(\sqrt{-K}D}{\sqrt{-K}D} (\sqrt{-K}D \coth(\sqrt{-K}D)-h \mu) \| \dot X(t_0) \|,  
    \end{equation*}
    where $D$ is an upper bound for the working domain.
\end{itemize}
\begin{figure}
    \centering
    \includegraphics[width=0.5\linewidth]{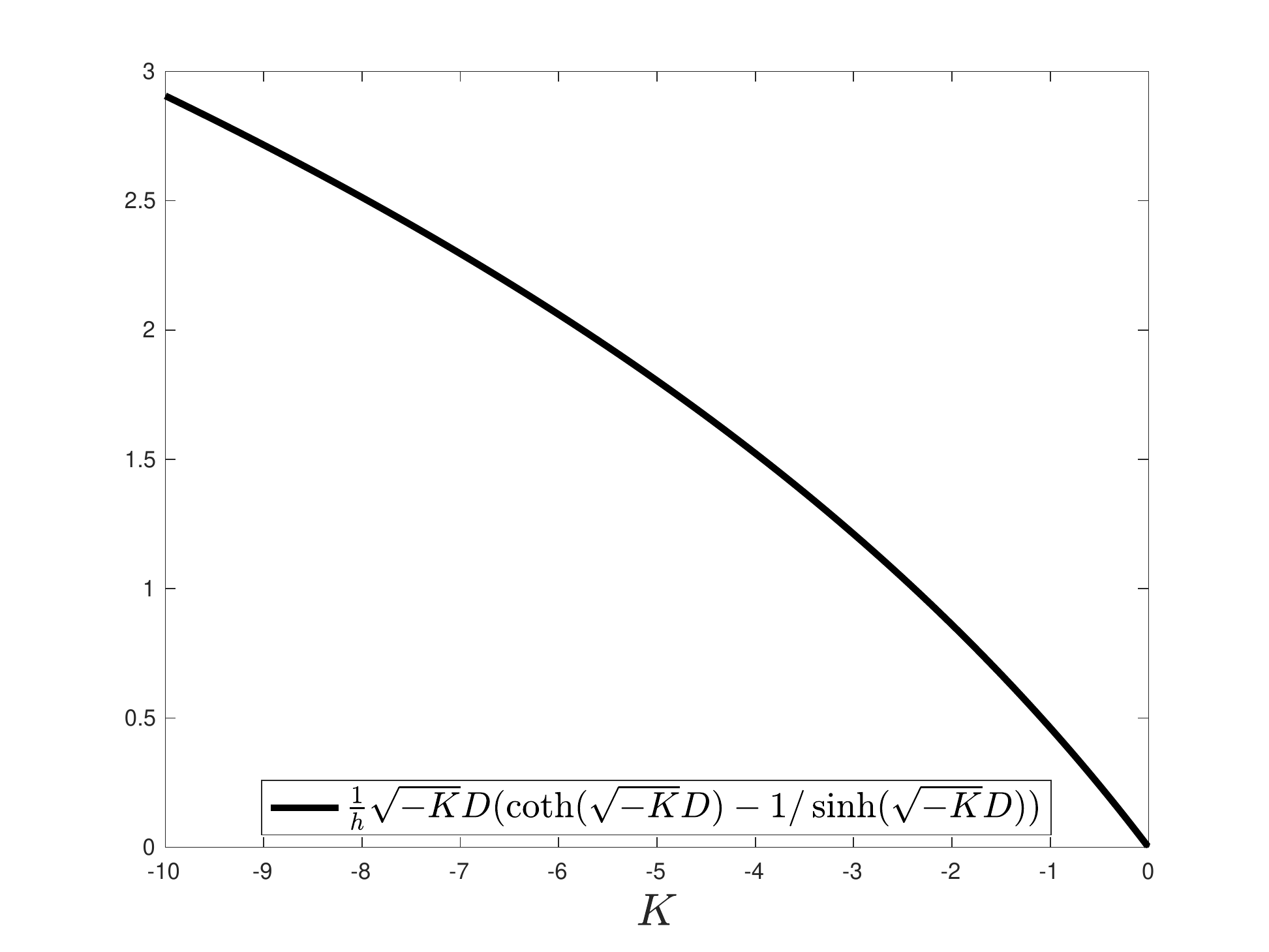}
    \caption{The lower bound for $\mu$ (Equation~\ref{eq:bound_on_mu}) in negative curvature, plotted for $D = h = 1$}
    \end{figure}
Finally $\| \dot X(t_0) \|=\| \dot X(0) \|= d(x_1,x_2)$, because $X$ is the geodesic connecting $x_1$ and $x_2$ (thus it has constant speed).
\end{proof}
\shadowing*
\begin{proof}
If
\begin{equation}
    \mu > \frac{1}{h} \sqrt{-K}D \left(\coth(\sqrt{-K}D)-\frac{1}{\sinh(\sqrt{-K}D)}\right), 
    \label{eq:bound_on_mu}
\end{equation}
then $\xi<1$ and Riemannian gradient descent is contracting. For this to hold we need extra to assume that $K$ and $D$ are chosen, such that $\frac{1}{h} \sqrt{-K}D (\coth(\sqrt{-K}D)-\frac{1}{\sinh(\sqrt{-K}D)})<L$.
Let $\epsilon>0$ be the desired tracking accuracy, to be restricted further later. By the contraction shadowing theorem, an orbit generated by Riemannian gradient flow is $\epsilon$-shadowed by a $\delta$-pseudo-orbit generated by Riemannian gradient descent, such that $\delta \leq (1-\xi) \epsilon$. Since $\delta \leq C h^2$, we need $C h^2 \leq (1-\xi) \epsilon$. Substituting $\xi=\lambda(\zeta-h \mu)$, we get the quadratic inequality:
\begin{equation*}
    h^2-\frac{\lambda \mu \epsilon}{C} h+\frac{(\lambda \zeta -1) \epsilon}{C} \leq 0.
\end{equation*}
This inequality has a solution if
\begin{equation*}
    \epsilon \geq \frac{4 C (\lambda \zeta-1)}{\lambda^2 \mu^2}.
\end{equation*}
Given this condition for $\epsilon$ we have that
\begin{equation*}
 h \leq \left(\frac{\lambda \mu}{2C }+\sqrt{\frac{\lambda^2 \mu^2}{4C^2}-\frac{\lambda \zeta-1}{C \epsilon}}\right) \epsilon   
\end{equation*}
Finally, taking into consideration that $h \leq \frac{1}{L}$ we get the result.
\end{proof}

\end{document}